\documentclass[a4paper,11pt,letterpaper, english]{amsart}
\title[Rigidity of 3-dim metric spaces]{Rigidity of actions on metric spaces close to three dimensional manifolds}

\author{No\'{e} B\'{a}rcenas }
\author{Manuel Sedano-Mendoza}
                \email{barcenas@matmor.unam.mx \\ msedano@matmor.unam.mx}
 \address{Centro de Ciencias Matem\'aticas. UNAM \\Ap.Postal 61-3 Xangari. Morelia, Michoac\'an MEXICO 58089}
 \date{\today}

\usepackage[colorinlistoftodos]{todonotes}


\usepackage{hyperref}
\usepackage{pdfsync}
\usepackage{calc}
\usepackage{enumerate,amssymb}
\usepackage[arrow,curve,matrix,tips,2cell]{xy}
  \SelectTips{eu}{10} \UseTips
  \UseAllTwocells

\usepackage[colorinlistoftodos]{todonotes}




\newtheorem{theorem}{Theorem}[section]
\newtheorem{lemma}[theorem]{Lemma}
\newtheorem{proposition}[theorem]{Proposition}
\newtheorem{corollary}[theorem]{Corollary}
\newtheorem{conjecture}[theorem]{Conjecture}

\newtheorem{definition}[theorem]{Definition}
\newtheorem{example}[theorem]{Example}

\newtheorem{problem}[theorem]{Problem}
\newtheorem{remark}[theorem]{Remark}

{\catcode`@=11\global\let\c@equation=\c@theorem}




\newcommand{\IC}{{\mathbb C}}

\newcommand{\IH}{{\mathbb H}}

\newcommand{\IN}{{\mathbb N}}

\newcommand{\IR}{{\mathbb R}}
\newcommand{\IS}{{\mathbb S}}

\newcommand{\IZ}{{\mathbb Z}}




\newcommand{\higherlim}[3]{{\setbox1=\hbox{\rm lim}
        \setbox2=\hbox to \wd1{\leftarrowfill} \ht2=0pt \dp2=-1pt
        \mathop{\vtop{\baselineskip=5pt\box1\box2}}
        _{#1}}^{#2}#3}

\newcommand{\version}[1]                       
{\begin{center} last edited on #1\\
last compiled on \today\\
name of texfile: \jobname
\end{center}  

\typeout{----------------------------  linluesau.tex  ----------------------------}

}

\newcounter{commentcounter}



\newcommand{\g}{\mathfrak{g}}
\newcommand{\R}{\mathbb{R}}
\newcommand{\C}{\mathbb{C}}
\newcommand{\Z}{\mathbb{Z}}
\renewcommand{\H}{\mathbb{H}}

\parskip=3mm

\begin{document}

\maketitle

\begin{abstract}
In  this  paper we propose a metric variation on the $C^0$-version of the Zimmer program for three manifolds. 
After  a  reexamination of  the  isometry  groups  of  geometric three-manifolds,  we  consider  homomorphisms  defined  on  higher  rank  lattices to  them and  establish a  dichotomy betweeen  finite  image or  infinite  volume of the  quotient. 
Along  the  way,  we  enumerate classification  results  for  actions  of  finite  groups  on three manifolds where  available,  and  we  give an  answer   to  a  metric  variation  on topological  versions  of  the  Zimmer  program  for  aspherical  three-manifolds, as asked  by Weinberger  and  Ye,  which are  based  on  the  dichotomy stablished  in  this  work and  known  topological  rigidity phenomena  for  three  manifolds. 
Using results  by  John Pardon  and  Galaz-Garc\'ia-Guijarro, the dichotomy  for  homomorphisms  of  higher  rank  lattices  to  isometry  groups  of  three  manifolds implies  that   a $C^0$-isometric  version  of the  Zimmer  program  is  also  true  for  singular geodesic  spaces closely  related  to three dimensional  manifolds,  namely  three  dimensional  geometric orbifolds  and  Alexandrov  spaces. A topological  version  of  the  Zimmer  Program is  seen  to  hold  in   dimension 3 for  Alexandrov  spaces  using  Pardon's ideas.  
\end{abstract}

\keywords{Zimmer  Program, 3-manifolds, Alexandrov Spaces, Hilbert  Smith Conjecture. 2020 AMS Classification: 57S05, 57S20, 22E40, 57M60, 53C23, 53C30.}

\section{Structure of the paper}

\tableofcontents

\section{Introduction}

\subsection*{Zimmers program}
The question on the nature  of  group  homomorphisms $\rho: \Gamma \to {\rm Diff} (M)$, between a  finitely generated group and the group of diffeomorphisms of a compact, $n$-dimensional, smooth manifold is interesting in many contexts. Particularly, in a series of conjectures known as the  Zimmer program \cite{zimmericm}, \cite{fisher1}, \cite{fisher2}, concerning the question on whether the group homomorphism cannot have large image if the dimension of the manifold is small, relative to the rank of the group. More precisely, the Zimmer program deals with groups $\Gamma$ which are lattices in a semisimple algebraic Lie group of rank at least 2, which we will refer to in this introduction as \textbf{higher rank lattices} see \ref{section:preliminaries}. As an example of this, in the recent result \cite{brownfisherhurtado}, it is found that a homomorphism $\rho : SL_{k+1}(\Z) \rightarrow {\rm Diff(M)}$ must have finite image when $k > n$ and $k \geq 2$, in this case the parameter $k$ is the rank of the Lie group $SL_{k+1}(\R)$. This result is greatly generalized for other higher rank semisimple lattices on \cite{brownfisherhurtado3}. Another instance of the Zimmer program is the complete characterization of the action in critical dimensions, c.f. Conjectures 1.2 and 1.3 in \cite {fisher1}. For example, when   the dimension hypothesis is  modified in the previous context to $n = k+1$, and the hypothesis that the action preserves a finite volume and an affine connection is added, then \cite[Theorem 6.9]{fisher1} tells us that the action is conjugated to the standard linear action of $SL_{n}(\Z)$ on $\mathbb{R}^n/\mathbb{Z}^n$. 

The  $C^{0}$-version of  the  Zimmer  Program, as  suggested  in \cite{weinberger}, and  \cite{ye1}, \cite{ye2}, asks  roughly  for changing the  category of manifolds and morphisms in the Zimmer Program, from the  smooth setting into  a  topological  setting, that is,  by  considering  a  group homomorphism from  a finitely  generated  group, and  specifically  a  higher rank  lattice, onto  the group  of  homeomorphisms  within a   prescribed  category (topological, smooth, piecewise  linear, etc.). The  following  Conjecture is  an  example  of  a  problem  stated  in  this  setting, found  in \cite{ye1}: 

\begin{problem}\label{problem:shengkui}
Any  group  action  of $SL_{k+1}(\mathbb{Z})$, with  $k\geq 2$, on  a  closed, aspherical $n$-manifold  by  homeomorphisms  factors  trough a  finite  group if $n < k$.  
\end{problem}

This variation of Zimmer Program is, as expected, much harder than the original one, and it has shown only small advances, such as the solution in the one-dimensional case by Hurtado and Deroin \cite{deroinhurtado}, and in the context of infinite homological actions on three-manifolds \cite{farbshalen}.

\subsection*{Variations of the problem}
It is natural to explore analogous rigidity results outside of the differentiable category and into the category of metric spaces endowed with extra structure. As an example of this, in \cite{Haettel} it is proved that any action of a higher rank lattice in a Gromov $\delta$-hyperbolic metric space must be elementary. Among many things, such result implies that any homomorphism $\rho :\Gamma \rightarrow \textrm{Mod}(S)$ from a higher rank lattice onto the mapping class group of a compact surface is finite, a result first proved in \cite{farbmasur}.

The notion closer to manifolds for  which  we explore these rifidity  results is that of Alexandrov spaces, which are metric spaces with a synthetic notion of curvature bounded from below. Alexandrov spaces include compact Riemannian manifolds and non-trivial modifications of them, such as orbifold quotients and Gromov-Hausdorff limits (see Section \ref{section:alex}. In this paper we propose the following variation of Problem \ref{problem:shengkui}

\begin{problem}\label{problem:shengkui_Alex}
Let  $X$  be an n-dimensional, compact, Alexandrov space. Does a group  homomorphism 
	\[ \rho: SL_{k+1}(\mathbb{Z}) \to {\rm Homeo}(X),	\]
factor through a finite group if $k > n$? Can we obtain a classification of such actions in the case $k = n$ or $k=n+1$ if we ask the action to be isometric?
\end{problem}

Of course, Problem \ref{problem:shengkui_Alex} can be stated for general higher rank lattices with comparison bounds on the dimension and rank as in \cite[Conjecture 4.12]{fisher1}.

Alexandrov spaces are rigid (in a sense analogous to Gromov's rigid geometric structures \cite{fisher1}) as their isometry groups are Lie groups with bounded dimension in terms of the dimension of the space \cite{galazguijarroisometry}. Moreover, 3-dimensional Alexandrov spaces which are sufficiently collapsed (c.f. section Alex), are in fact orbifolds over one of the eight Thurston geometries \cite{galazguijarrochu}. These reduction phenomenae together with classification results for isometric actions on Thurston geometries, lead us to the following much more tractable problem:

\begin{problem}\label{problem:3d-alex}
Let $X$ be a $3$-dimensional, compact Alexandrov space. Characterize any homomorphism
    \[  \rho: \Gamma \to {\rm Iso}(X), \]
where $\Gamma$ is a higher rank lattice.
\end{problem}

It is worth mentioning other rigidity results obtained for $3$-dimensional Alexandrov spaces such as the proof of the Borel  conjeture for sufficiently collapsed Alexandrov spaces \cite{barcenasnunezrmi}. Finally, John Pardon's proof of the Hilbert-Smith conjecture for three manifolds \cite{pardon}, can be extended to the singular case in the setting of Alexandrov spaces as it can be reduced to a local behaviour, leading to the following result:

\begin{theorem}\label{Theorem_Hilbert-Smith-3-man}
If $G$ is a locally compact, topological group, acting faithfully on a three dimensional Alexandrov space by homeomorphisms, then $G$ is a Lie group.
\end{theorem}

This result lead to the natural generalization of the Hilbert-Smith conjecture, which simply would ask if Theorem \ref{Theorem_Hilbert-Smith-3-man} is valid for $n$-dimensional Alexandrov spaces). A first approach to this generalizationis to extend the result of \cite{RepScep}, proving Hilbert-Smith conjecture for Lipschitz actions, where the difficulty lies on the extension  of Yang's Theorem (on the increase of dimension  in the  quotient for $p$-adic actions \cite{yang}  \cite{BREDON}) to Alexandrov spaces.

\subsection*{Main results and related discussions}

The main result of this paper, concerning Problem \ref{problem:3d-alex} is 

\begin{theorem}\label{main_thm}
Let $\widetilde{X}$ be a simply connected, homogeneous 3-dimensional manifold and let $H$ be a discrete group of isometries of $\widetilde{X}$, such that $X = \widetilde{X} / H$ has finite volume, then $X$ admits an infinite isometric action of a higher rank lattice $\Gamma \subset G$ if and only if the group $Iso(X)$ contains the group $SO(3)$. Moreover, the semisimple Lie group $G$ is isotypic of type $SO(3)$, the lattice is uniform and $X$ is a orbifold over either $S^3$ or $\mathbb{R} \times S^2$.
\end{theorem}

Recall that an isotypic group of type $SO(3)$ is an algebraic group which is, up to finite covers and connected components, a product of copies of $SO(3)$ and $SO(2,1)$ (see Section \ref{section:preliminaries}). In \cite{barcenasnunezrmi}, a three dimensional  Alexandrov space $X$ of ${\rm curv} \geq -1$ is  said  to  be \emph{sufficiently  collapsed},  if  there  exist $D>0$ and $\epsilon > 0$ such  that  the diameter of  $X$ is less  or  equal  to  $D$, and  the  volume  is  strictly  less  than  $\epsilon$. We  include  as  a  corollary  of the  results  here  a classification  of  the  discrete  groups acting  by  isometries  on  a three dimensional Alexandrov  space  with a  sufficiently  collapsed quotient

\begin{corollary}   
Assume  that  a discrete group $\Gamma$ acts  by  isometries on the  three dimensional  Alexandrov  space $X$ such  that the  quotient $X/\Gamma$ is  sufficiently  collapsed  with parameters $d$, and $\epsilon$. Then, Theorem \ref{zimmer:3man},  together  with the  geometrization of  $3$-dimensional  Alexandrov  spaces provide  a  classification of the  possible  such $\Gamma$ within  the lattices in the  isometry  groups.  
\end{corollary}

 As an immediate consequence of this theorem, we get the following corollary in the spirit of the Zimmer's problem

\begin{corollary}
Let $\Gamma$ be a higher rank lattice acting by isometries on a finite volume, three dimensional orbifold $X$ (modelled over a homogeneous 3-manifold $X$), then the action factors through a finite group if either:
\begin{itemize}
    \item $X$ is aspherical or,

    \item $\Gamma$ is non-uniform.
\end{itemize}
As an example of this, we have $\Gamma = SL_{r}(\mathbb{Z})$ with  $r\geq 3$.
\end{corollary}

The proof of Theorem \ref{main_thm}  relies on close, case by case examination of Thurston's $3$-dimensional geometries, their finite volume quotients and their corresponding isometry groups. The computations of such groups can be summarized in the following
 
\begin{theorem}\label{zimmer:3man}
Let $\widetilde{X}$ be a simply connected, homogeneous 3-dimensional manifold and let $G$ be a discrete group of isometries of $\widetilde{X}$, such that $\widetilde{X} / G$ has finite volume. Then the isometry group $Iso(\widetilde{X}/G)$ has finitely many connected components, such that its connected component of the identity is isomorphic to
\begin{itemize}
		\item a closed subgroup of $\IS^1$, if $\widetilde{X}$ is either $\IH^2 \times \IR$, $\widetilde{SL}_2(\IR)$ or $Nil$;

		\item a closed subgroup of $SO(3) \times S^1$, if $\widetilde{X} =  S^2 \times \IR$;

		\item a closed subgroup of $\IR^3 / \IZ^3$, if $\widetilde{X} = \IR^3$;

		\item a closed subgroup of $SO(4)$, if $\widetilde{X} = S^3$. 
\end{itemize}
Moreover, $Iso(\widetilde{X}/G)$ is finite if $\widetilde{X}$ is either $\IH^3$ or $Sol$.
\end{theorem}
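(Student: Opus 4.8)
The plan is to realise $Iso(X/\Gamma)$ as a subquotient of $Iso(X)$ and then compute its identity component geometry by geometry. Since $X$ is simply connected and $\Gamma$ acts properly discontinuously, $X \to X/\Gamma$ is the orbifold universal covering, so every isometry of the quotient lifts to an isometry of $X$ normalising $\Gamma$, and two lifts differ by an element of $\Gamma$. This yields a natural isomorphism $Iso(X/\Gamma) \cong N/\Gamma$ with $N = N_{Iso(X)}(\Gamma)$, and on identity components $Iso(X/\Gamma)_0 \cong N_0/(N_0 \cap \Gamma)$, where $N_0 = N_{Iso(X)}(\Gamma)_0$. The structural observation I would make first is that $N_0$ equals the connected centraliser $Z_{Iso(X)}(\Gamma)_0$: conjugation gives a continuous homomorphism $N_0 \to Aut(\Gamma)$, and since $\Gamma$ is finitely generated $Aut(\Gamma)$ is discrete, so a connected group maps trivially and $N_0$ centralises $\Gamma$; the reverse inclusion is immediate. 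The whole theorem thus reduces to computing the connected centraliser of $\Gamma$ and its intersection with $\Gamma$ in each geometry.

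For the two rigid cases $\IH^3$ and $Sol$ the claim is that the centraliser is trivial. For $\IH^3$ the group $\Gamma$ is a lattice in the centreless simple group $Iso(\IH^3)_0 = PSL_2(\IC)$, hence Zariski dense by the Borel density theorem, so its centraliser is trivial; then $N_0 = \{e\}$, $N$ is a discrete overgroup of the lattice $\Gamma$ of necessarily finite index, and $Iso(X/\Gamma)$ is finite. For $Sol$ a direct computation in $\IR^2 \rtimes \IR$ does the job: a lattice contains both the translation sublattice and an Anosov element whose action on the $\IR^2$-factor has no unit eigenvalue, and commuting with these forces an isometry in $Iso(Sol)_0 = Sol$ to be trivial; again $N_0 = \{e\}$, and since $Sol$-manifolds are compact, finiteness follows.

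For the three Seifert-fibred geometries $\IH^2 \times \IR$, $\widetilde{SL}_2(\IR)$ and $Nil$ I would use the bundle $F \to X \to B$, with one-dimensional fibre direction $\IR_{\mathrm{fib}} = \ker p$ for $p\colon Iso(X)_0 \to Iso(B)_0$. The Dichotomy applies because $X/\Gamma$ has finite volume, making $\bar\Gamma = p(\Gamma)$ a lattice in $Iso(B)$. The goal is to prove $N_0 \subseteq \IR_{\mathrm{fib}}$; once this holds $N_0$ is either trivial or all of $\IR_{\mathrm{fib}} \cong \IR$, and finiteness of volume forces the fibre $\IR_{\mathrm{fib}}/(\Gamma \cap \IR_{\mathrm{fib}})$ to be compact, i.e. $\Gamma \cap \IR_{\mathrm{fib}} \cong \IZ$, so $N_0/(N_0\cap\Gamma)$ is a closed subgroup of $\IR/\IZ = S^1$. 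For the hyperbolic bases the inclusion $p(N_0) = \{e\}$ is automatic, since $p(N_0)$ is a connected subgroup of $PSL_2(\IR)$ normalising the lattice $\bar\Gamma$. The main obstacle is $Nil$: here $B = \IE^2$ is flat, $Iso(\IE^2)$ is not semisimple, and the normaliser of a crystallographic lattice already contains a full plane of translations, so the argument above breaks. What rescues the statement is the nontrivial Euler class of the circle bundle: a base translation lifts to an isometry normalising $\Gamma$ only if it is compatible with the connection, and an explicit Heisenberg computation shows the connected normaliser inside the $Nil$-translations collapses onto the centre $\IR_{\mathrm{fib}}$, while the residual rotations $SO(2)$ contribute only finitely many normalising elements. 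This is the step I expect to demand the most care.

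The remaining compact geometries are routine by the same recipe. For $S^2 \times \IR$, $\IR^3$ and $S^3$ I would compute $N_0 = Z_{Iso(X)}(\Gamma)_0$ directly inside $SO(3)\times\IR$, $\IR^3 \rtimes SO(3)$ and $O(4)$: it lands respectively in $SO(3)\times\IR_{\mathrm{base}}$, in the translation subspace fixed by the point group of $\Gamma$, and in $SO(4)$, and quotienting by the cocompact (resp. finite) part of $\Gamma$ produces a closed subgroup of $SO(3)\times S^1$, of $\IR^3/\IZ^3$, or of $SO(4)$. Finally, finiteness of $\pi_0\big(Iso(X/\Gamma)\big)$ follows in the compact cases from compactness of the isometry group, and in the non-compact finite-volume cases $\IH^3$, $\IH^2\times\IR$, $\widetilde{SL}_2(\IR)$ from finiteness of the outer symmetry group of $\Gamma$, which is controlled by the cusp structure and the finite symmetries of the base orbifold.
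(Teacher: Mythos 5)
Your overall strategy coincides with the paper's: the identification $Iso(X/\Gamma)\cong N_{Iso(X)}(\Gamma)/\Gamma$, the observation that the identity components of normalizer and centralizer agree (this is exactly Proposition \ref{prop:centralizer-normalizer}), Borel density plus a covolume comparison for $\IH^3$ (Lemma \ref{lema:hyperbolic-isometry-group}), an explicit centralizer computation for $Sol$ (Lemma \ref{lema:solv-centralizer}), and the fibration picture with $N_0\subseteq\ker p$ for the three fibered geometries, which is how the paper's Theorems \ref{prop:nil-isometry-group}, \ref{teo:isometry-group-product} and \ref{teo:hyperbolic-fibration-isometry-group} proceed. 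Where you carry arguments out, they are correct.

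The genuine gap is that you invoke the Dichotomy as a black box, while the Dichotomy \emph{is} the technical heart of the paper's proof of this theorem: it is stated in the introduction only as a summary of what the geometry sections establish, and is nowhere proved independently of them. Concretely, you use repeatedly that finite volume of $X/\Gamma$ forces the projection $p(\Gamma)$ to be discrete (indeed a lattice) in $Iso(B)$, and that $\Gamma\cap\ker p\cong\IZ$. In the paper these facts are: Proposition \ref{prop:nil-discrete-proj} together with the fundamental-domain volume estimates of Lemma \ref{lema:nil-infinite-volume} for $Nil$; the commutator-stable neighborhood Lemma \ref{lema:stable-neighborhoods-commutator-1}, Proposition \ref{prop:hyperbolic-discrete-projections}, and the volume argument inside Theorem \ref{teo:hyperbolic-fibration-isometry-group} for $\widetilde{SL}_2(\IR)$; Corollary \ref{cor:finite-index-commutative-isometries} and the explicit fundamental domains in Theorem \ref{teo:isometry-group-product} for $\IH^2\times\IR$; and Lemma \ref{prop:solv-discrete-projection} for $Sol$ (your $Sol$ argument also silently uses that $\Gamma\cap[S,S]$ has rank two, which is proved there). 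None of this is routine: one must classify what a non-discrete projection can look like (abelian, fixing a point, a line, a geodesic, or a horocycle) and then convert that invariant structure into an infinite-volume fundamental domain. Similarly, for $Nil$ you correctly isolate the crucial claim that the connected normalizer of a lattice in $H_\R$ collapses onto the center, and you correctly flag it as the delicate step, but you only assert it; the paper proves it via the characterization of lattices in Lemma \ref{lema:carac_latt} and the normalizer computations culminating in Theorem \ref{prop:nil-isometry-group}. So your proposal is a correct reduction of the theorem to the Dichotomy plus the Heisenberg normalizer computation, but those two items constitute most of the paper's actual proof.
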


\subsection*{Strategy of the proof and structure of the paper.}
Main Theorem \ref{main_thm} is proved in Section \ref{section:preliminaries} using the classification of isometry groups of orbifolds given by Theorem \ref{zimmer:3man}, together with rigidity results of semisimple Lie groups. To prove Theorem \ref{zimmer:3man}, one first need to understand finite volume quotients of Thurston's three-dimensional geometries. Among such geometries, the most homogeneous ones are $S^3$, $\IH^3$ and $\IR^3$; and the remaining five present a more flexible description as fiber bundles
	\[	F \rightarrow X \rightarrow B,	\]
where $B$ is a two dimensional homogeneous geometry for $X$ either $\IH^2 \times \IR$, $\widetilde{SL}_2(\IR)$ or $\textrm{Nil}$ and $B = \IR$ for $X$ either $Sol$ or $S^2 \times \IR$. In this context, a discrete group acting on the homogeneous space $X$, acts on the base space of the corresponding fiber bundle as well. The induced action on the base space of the fiber bundle presents a dual behaviour given by the following Theorem, whose proof can be derived from the discussions on \cite{thu}.

\begin{theorem}\label{theo:discrete}
Let $G$  be  a discrete group of isometries of any of the 3-dimensional geometric manifolds $\IH^2 \times \IR$, $\widetilde{SL}_2(\IR)$ or $Nil$; then either 
	\begin{itemize}
		\item $G$ projects to a discrete group of isometries of the base $B$ of the fiber bundle, or 

		\item the orbifold $\widetilde{X}/G$ has infinite  volume. 
	\end{itemize}
Moreover, in the cases $Sol$ and $S^2 \times \IR$, the projection to the base space is always discrete.
\end{theorem}

For the sake of completeness we present here a proof of Theorem \ref{theo:discrete}. The proofs of Theorem \ref{zimmer:3man} and Theorem \ref{theo:discrete} are carried out in a case by case setting on each Thurston geometry.

The structure of the paper is as follows: In Section \ref{section:preliminaries} we present background material on three dimensional Alexandrov spaces, the Hilbert Smith conjecture and semisimple Lie groups and their lattices. Sections \ref{section:euclidean} through \ref{section:SL} cover the proof of Theorem \ref{theo:discrete} and Theorem \ref{zimmer:3man} on each individual three dimensional geometry.

Theorem \ref{theo:discrete} can be used to obtain explicit characterizations of discrete groups acting on the corresponding three-dimensional geometry (in particular within the $Nil$ and $Sol$ cases), which lead to the proof of Theorem \ref{zimmer:3man} in each case.

\subsection{Concluding remarks  and open questions}

In this work we  proposed  a  metric variation  on  the  Zimmer program. The  variation  consisted in 
\begin{itemize}
\item Strengthening the  category of automorphisms of  the  action from $C^{0}$ to  isometries. 
\item Relaxing  the topological type  of  the spaces  considered from  smooth manifolds to   Alexandrov  spaces,  which  include three dimensional geometric orbifolds. 
\end{itemize}

While  Alexandrov  spaces  have  an  open  dense  subset  which  is a topological  manifold, results   related  to Zimmer's  conjeture  do  not  apply  directly  because  the  topological   manifold  is  open, and  the  only rigidity  results  for  actions  on  open  manifolds  in the  spirit of  the  Zimmer program which  are  known  to  the  authors  are  restricted to  the one  dimensional  case \cite{deroinhurtado}.  In  another  instance  of  a  complication, the  manifold  is  not Riemannian, as  Otsu-Shioya's  example  shows \cite{OtsuShioya}.

There  exist several instances  of families  of homeomorphisms of metric  geodesic  spaces for  which  rigidity results  of  actions  of discrete  groups can  be  proved. Among  them  we  can  consider,  for  a  strengthening with  respect  to homeomorphisms  and  a weakening  with  respect  to  isometries: 

\begin{itemize}
\item Quasiconformal  homeomorphisms, as  in the  alternative  proof  of  Mostow Rigidity Theorem  by  \cite{bourdon}. 

\item Bilipschitz homeomorphisms, as  in  the  proof  of the  Hilbert-Smith Conjecture  mentioned  before
\cite{RepScep}. 

\item Quasim\"obius homeomorphisms  as  in \cite{bonkkleiner},  where  rigidity  results  for  them  have  as  a consequence  the  rigidity  of actions  of quasi  convex  cocompact  actions  on ${\rm CAT(-1)}$-spaces.
\end{itemize}

Moreover,  the specific analytic and  geometric  characteristics  of  the  class  of  homeomorphisms  are  exploited  in  the  process of proving  an  action rigidity  result in an analogous  way to how we  used  the geometric  structure  of Alexandrov three  spaces  in this  work,  inspired  by  the  proofs  of  Zimmer  program results  in  the  diffeomorphism case.  

Let us  introduce  the  notation  
$${\rm AUT}^{\alpha}(X) $$
 for homeomorphisms  of a  metric  space  with a  metric  property $\alpha$ and  let us  refer  to   a  metric  condition $\alpha$ as  a  decoration in  analogy  with  surgery  theory,  having  at  least the   following  examples  in mind: 
 \begin{enumerate}
     \item The  smooth  case $\alpha= {\rm Diff}$, for  diffeomorphisms of  the  smooth  structure associated  to a Riemannian  metric  on a smooth  manifold with fixed  metric. 
     \item The  topological case $\alpha={\rm Top}$, refering  to  homeomorphisms of a  topological  manifold. 
     \item The  isometric  case $\rm Iso$, meaning   isometries  of the  geodesic metric  space associated  to a geometric three manifold, orbifold  or  Alexandrov  space.   
     
    \item The  quasiconformal  case $\alpha= QC$,  the  referring  to quasiconformal   homemomorphisms, $\alpha=QM$, associated  to  quasim\"obius  homeomorphisms  and $\alpha=BI$ for  bilipschitz  homemorphisms of a  geodesic  length metric  space as  discussed  in  the  paragraph above.

 \end{enumerate}

We  can  consider  the problem  of  describing  the behaviour  of  a group homomorphism  

$$ \Lambda \longrightarrow {\rm AUT}^{\alpha}(X) $$
to  a homeomorphism group  with  decorations as  described. 
Notice  that  $\alpha= {\rm Diff}$ is  the  Zimmer  program as  described for  instance  in \cite{fisher1},\cite{fisher2}, \cite{zimmerbook}. On the  other  hand  $\alpha= {\rm Top}$ is  the  $C^{0}$-Zimmer  program as  described in \cite{weinberger}, \cite{ye1}, \cite{ye2}. Finally $\alpha = {\rm Iso}$ was  the  point  of  view  adopted  in this  note, and  $\alpha=QC, QM, BI$ are  as  before. 

We  would  like  to  finish the  present  note  with the  following  two  questions: 
\begin{itemize}

\item  To what extent a condition of prescribed curvature in metric spaces, such  as  the  Alexandrov  condition, or  a  choice  of $QC$, $QM$ or  $BI$-structures can be seen  as a  rigid  structure,  in  the  sense  of Gromov \cite{fisher1}? 

\item For  which  decorations  of  homeomorphisms  $\alpha$ is  it possible to prove  that homeomorphisms of a higher  rank lattice $\Lambda $  with  respect  to  the  dimension of  an Alexandrov  space $X$ 
$$ \Lambda\longrightarrow {\rm AUT}
^{\alpha }(X)$$
or  in general a metric measure space $X$ with  finite  Hausdorff dimension less  than  the  rank of  $\Lambda$ either  factorize trough  a  finite  quotient of  $\Lambda$  or  produce  a  quotient  of infinite  volume? 
\end{itemize}

\section*{Acknowledgments}

The  first  author  thanks DGAPA Project IN100221.  Parts of  this  project  were written  during  a  Sabbatical  Stay  at  the Universit\"at  des Saarlandes,  with  support  of DGAPA-UNAM Sabbatical Program and  the  SFB TRR 195 Symbolic Tools in Mathematics and their Application,  and CONACYT trough grant CF 217392. 

The  second  named  author   thanks 
CONACYT Grant  CB2016-283988-F,  
CONACYT Grant  CB2016-283960,  
as  well as  a  DGAPA-UNAM  Postdoctoral Scholarship.  

\section{Preliminaries}\label{section:preliminaries}

\subsection{Three Dimensional Manifolds}
Recall  that  a closed  three  dimensional manifold $M$  is  prime  if  it cannot  be written  as  a  connected  sum with summands not  homeomorphic to  the  three dimensional  sphere.

By the  prime  decomposition  Theorem, 3.15  page 31 in \cite{hempel},  any closed three dimensional  manifold  can be  written as  a  connected  sum of  prime  factors. 

Recall that  a  model geometry is a simply connected smooth three  manifold $X$ together with a transitive action of a Lie group $G$ on $X$ with compact stabilizers such that  $G$ is maximal among groups acting smoothly and transitively on $X$ with compact stabilizers. 

A geometric structure on a  three dimensional manifold $M$ is a diffeomorphism from $M$ to $X\diagup \Gamma $ for some model geometry $X$, where $\Gamma$ is a discrete subgroup of $G$ acting freely on X ;

It  is  a  consequence  of  Thurston geometrization  theorem due  to Perelman,  that any prime closed  three dimensional  manifold  can be  cut  along  three dimensional  tori such that  the interiors  of  the resulting  manifold  carries  a  geometric  structure of finite volume. 

The  eight  three dimensional geometries  which  admit  at  least  one  compact  three manifold  are : 
\begin{enumerate}
    \item Spherical, where the  three dimensional  sphere  is a  representative. 
    \item Euclidean, where the  flat  three dimensional  torus  is  an  example,
    \item Hyperbolic, where  the three dimensional hyperbolic space  is  an example. 
    \item $\widetilde{SL}_{2}(\mathbb{R})$, where  an  example  is given by  the  unit  tangent bundle in a   Riemannian metric  of  the tangent  bundle  over a genus  two  surface.
    \item $Nil$,  where  an example  is  given  by  the mapping  torus  of  a  Dehn twist  on the two  dimensional  torus. 
    \item $Sol$,  where  an  example  is  given  by a  manifold  which fibers over the line with fiber the plane. 
     \item $S^{2}\times \mathbb{R}$.
    \item $S^{1}\times \mathbb{H}^{2}$. 
    
\end{enumerate}

A closed 3-manifold has a geometric structure of at most one of the 8 types above.  

\begin{remark}(Non-uniqueness of Geometric Structures on three  manifolds)
\begin{itemize}

    \item Finite volume non-compact manifolds  may have more than one type of geometric structure. An  example is  the  complement  of  the  trefoil knot, which  has hyperbolic  structure and  $\widetilde{Sl}_{2}$ structure.
    \item If  the three manifold   has  infinite  volume, it  might  carry  many  geometric  structures, for  example $\mathbb{R}^{3}$  is  diffeomorphic  to  all  aspherical  model  geometries. 
    \item There  exists  an  infinite  number  of geometric structures with no compact models; for example, the geometry of many non-unimodular 3-dimensional Lie groups, see remark \ref{remark:unimodular} below. 
\end{itemize}

\end{remark}

\subsection{Three dimensional Alexandrov Spaces}\label{section:alex}
We  will need  some  preliminaries  on  three  dimensional Alexandrov  spaces. For  a  general  reference  see \cite{buragobook}. 

For  the  purposes  of  this  work, Alexandrov spaces are well behaved metric spaces which have three main properties that we want to highlight here: 
\begin{enumerate}    
    \item They  have  an open dense subset which is topological manifold.

    \item Their isometry groups are Lie groups.

    \item The class  of  Alexandrov  spaces  is stable under Gromov-Hausdorff convergence. 

    \item They   include orbifolds over Riemannian manifolds. 
\end{enumerate}

In  slightly  more  detail, Alexandrov  spaces are  a  synthetic  generalization  of complete Riemannian  manifolds  with  a  lower  bound  on sectional  curvature. The  generalization  uses  comparison  triangles with  respect  to  the model spaces  $S_{k}^{2}$,  which  are simply  connected,  two  dimensional  complete Riemannian  manifolds of  constant  curvature  $k$. More precisely, for  $k>0$, $S_{k}^{2}$ is  the sphere  of  radius  $\frac{1}{\sqrt{k}}$, for  $k<0$, $S_{k}^{2}$ is  the  hyperbolic  plane $\mathbb{H}^{2}(\frac{1}{\sqrt{-k}})$  of  constant  curvature $k$, and  for $k=0$, $S_{k}^{2}$ is  the  euclidean space  $\mathbb{R}^{2}$.

Given  a  geodesic  triangle in a geodesic length  space $(X,d)$, with  vertices  $p, q, r \in X$, a  comparison  triangle in $S_{k}^{2}$  is a geodesic triangle $\bar{p}\bar{q}\bar{r}$ having the same side lengths. The geodesic length  space $(X,d)$ is  said  to  satisfy  the Topogonov property for  $k \in \mathbb{R}$, if for  each  triple $p,q,r \in X$ of vertices of a  geodesic  triangle, and each point $s$ on the  geodesic  from $q$  to $r$, the inequality  $d(p,s)\geq d(\bar{p}, \bar{s})$ holds,  where $\bar{s}$  is  the point  on the geodesic side  $\bar{q}\bar{r}$ of  the  comparison  triangle  with $d(\bar{p}\bar{s})=d(p,s)$. 

\begin{definition}\label{def:alex}
A $n$-dimensional $k$-Alexandrov  space is a  complete, locally compact, length space of finite Hausdorff dimension $n$, such that the Topogonov Property is satisfied locally for $k$. 
\end{definition}

Topogonov's  globalization theorem tells us that the local and global Toponogov property are equivalent in $k$-Alexandrov spaces. By  Gromov's  precompactness  theorem, Alexandrov $n$-dimensional  spaces  arise  as  Gromov-Hausdorf  limits  of  compact  riemannnian  manifolds  of  dimension  $n$  for  which  the  sectional  curvature is  bounded  below  by  $k$,  and  the  diameter is bounded  above  by  some   fixed  positive  number  $D$. 

The  class  of  $k$-Alexandrov   spaces includes   riemannian   manifolds  of  sectional  curvature  bounded  below  by  $k$, and  several  constructions  including more general  geodesic  length spaces    such  as  euclidean  cones, suspensions, joins, quotients  by  isometric  actions of  compact  Lie  groups, and  glueings  along  a  submetry, see \cite{galazglance} section  2.2. From  now  on,  we will  omit  the  $k$  from  the  notation. 

There  exists  a  notion  of  angle  between  geodesics of   an  Alexandrov space,  and  a  space  of  tangent  directions at  a  given  point  $p$, denoted  by $\Sigma_{p}$, can  be  defined  as  the  completion  of  the metric  space  of  equivalence  classes  of  geodesics  making  a  zero  angle.

 The   space  of  tangent  directions  at  a  point $p$ in an Alexandrov space $X$, denoted  by $\Sigma_{p}$, has the structure of a $1$-Alexandrov space of  Hausdorf  dimension ${\rm dim} (X)-1$. There is a set $R_X \subset X$, called the \emph{set  of  metrically regular  points}, where a point $p$ belongs to $R_X$ if its direction space $\Sigma_{p}$ is isometric to  the radius  one  sphere. The  complement is  called the {\emph set  of  metrically  singular  points} and denoted by $S_X = X \setminus R_X$. There are examples of Alexandrov spaces whose space of metrically singular points is dense, as seen in an example constructed in \cite{OtsuShioya} as a limit of Alexandrov spaces, using baricentric subdivisions of a tetrahedron. However, for every Alexandrov space $X$, there is a dense subset of topologically regular points, whose space of directions are \emph{homeomorphic} to a sphere (the  set  of topologically singular points  is  the  complement  of  the  set  of  topologically  regular  points). By  Perelman's conical  neighborhood theorem,  every  point  $p$  in an  Alexandrov  space  has  a neighborhood  pointed  homeomorphic  to  the  euclidean  cone  over  $\Sigma_{p}$, so that a locally compact, finite dimensional Alexandrov space has a dense subset which is a topological manifold.

In the  specific  case  of   dimension  three, there are only two possibilities for the homeomorphic type of the space of directions, which is the two sphere $S^2$, for the topologically regular points and the real projective space $\mathbb{RP}^2$ for the topologically singular points. Let  us summarize the  basic  structure  of  three  dimensional  Alexandrov  spaces due  to  Galaz-Garc\'ia  and  Guijarro, compare  Theorem  1.1  in page  5561  of  \cite{galazguijarrosurvey}, and Theorem  3.1  and  3.2    in page 1196  of  \cite{galazguijarroisometry}. See also \cite{harvey-searle}.

\begin{theorem}\label{theo:alexdim3}
Let $X$ be a three dimensional Alexandrov space.

\begin{itemize}
\item  The set  of metrically  regular  points  is  a Riemannian three manifold. 
\item The set of topologically singular points is a discrete subset of $X$.
\item If $X$ is closed, and positively curved Alexandrov space, that contains a topologically  singular  point, then $X$ is  homeomorphic  to  the suspension  of  $\mathbb{R}P^{2}$. 
\end{itemize}
\end{theorem}

A  closed  Alexandrov  space  is \emph{geometric} if  it can be  written  as  a  quotient  of  one  of  the  eight  geometries  of  Thurston under  a cocompact  lattice. The  following  theorem  was  proved  as  Theorem 1.6  in  \cite{galazguijarrosurvey}  in  page  5563. See also \cite{harvey-searle}

\begin{theorem}\label{theo:geometrizationalex}
A  three  dimensional  Alexandrov  space admits  a  geometric decomposition into geometric pieces, along spheres, projective  planes, tori  and  Klein  bottles.

\end{theorem}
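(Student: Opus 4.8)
The plan is to reduce the statement to the geometrization of closed $3$-orbifolds by an equivariant argument on a branched double cover. First I would use Theorem \ref{theo:alexdim3} to pin down the topology of $X$: away from the discrete set $\{p_1,\dots,p_k\}$ of topologically singular points, $X$ is a Riemannian $3$-manifold, and Perelman's conical neighborhood theorem together with the dimension-$3$ dichotomy for spaces of directions (which are $S^2$ or $\mathbb{R}P^2$) shows that each $p_i$ has a neighborhood homeomorphic to the open cone $c(\mathbb{R}P^2)$. Since $c(\mathbb{R}P^2)\cong \R^3/\{\pm\mathrm{Id}\}$, this exhibits $X$ as a closed $3$-orbifold whose singular locus consists of the isolated points $p_i$, each with local group $\Z/2$ acting by the antipodal map $-\mathrm{Id}$, in particular orientation-reversingly.

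Next I would construct the orientation double cover of this orbifold. The orbifold orientation character $w\colon \pi_1^{\mathrm{orb}}(X)\to \Z/2$ restricts nontrivially to each local group $\langle -\mathrm{Id}\rangle$, so $\ker w$ determines a double cover $\widetilde X\to X$ with deck involution $\tau$. Over each chart $\R^3/\{\pm\mathrm{Id}\}$ this cover is the quotient map $\R^3\to \R^3/\{\pm\mathrm{Id}\}$, which resolves the cone point to a smooth point; hence $\widetilde X$ is a closed orientable $3$-manifold, $\tau$ is an orientation-reversing involution whose fixed-point set is exactly the isolated preimages of the $p_i$, and $X\cong \widetilde X/\tau$. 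Moreover $\widetilde X$ inherits a $\tau$-invariant Riemannian metric from the regular part of $X$. (If $X$ has no topologically singular points it is already a closed $3$-manifold and one applies the geometrization theorem directly, decomposing along spheres and tori.)

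With $X\cong \widetilde X/\tau$ in hand, I would invoke equivariant $3$-manifold geometrization for the pair $(\widetilde X,\tau)$, equivalently the geometrization (orbifold) theorem applied to the orbifold $X$. The equivariant sphere theorem of Meeks--Yau produces a $\tau$-invariant system of essential $2$-spheres realizing the prime decomposition, the equivariant JSJ step produces a $\tau$-invariant family of incompressible tori, and Perelman's Ricci flow with surgery, made equivariant (or the Thurston orbifold theorem), endows the resulting pieces with $\tau$-invariant geometric structures. Isotoping the decomposing surfaces off the isolated fixed points of $\tau$, the induced involution on each invariant sphere or torus is free, so in the quotient each $\tau$-invariant sphere descends either to a sphere or to $\mathbb{R}P^2=S^2/\tau$, each $\tau$-invariant torus descends either to a torus or to a Klein bottle $T^2/\tau$, and the geometric pieces descend to geometric $3$-orbifolds. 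This is precisely a geometric decomposition of $X$ along spheres, projective planes, tori and Klein bottles.

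The main obstacle is the equivariant geometrization input in the last paragraph: arranging both the topological decomposition (prime and JSJ) and the geometric structures on the pieces to be simultaneously $\tau$-equivariant. This is the deep part, resting on the equivariant versions of the sphere theorem and of Perelman's Ricci flow with surgery, or on the orbifold theorem; the Alexandrov hypothesis enters only through Theorem \ref{theo:alexdim3}, which guarantees the clean isolated $c(\mathbb{R}P^2)$ singularity structure that makes the reduction to a manifold-with-involution possible. A secondary technical point is the careful classification of the involutions induced on the invariant surfaces, needed to confirm that no surfaces other than $S^2$, $\mathbb{R}P^2$, $T^2$ and the Klein bottle can occur.
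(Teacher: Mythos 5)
Your sketch is correct, and it is essentially the argument behind the result the paper relies on: the paper does not prove Theorem \ref{theo:geometrizationalex} itself but cites it as Theorem 1.6 of \cite{galazguijarrosurvey}, whose proof proceeds exactly as you describe --- resolve the isolated $c(\mathbb{R}P^2)$ singularities guaranteed by Theorem \ref{theo:alexdim3} via the ramified orientation double cover, so that $X \cong \widetilde{X}/\tau$ for a closed orientable $3$-manifold $\widetilde{X}$ with an orientation-reversing involution $\tau$ having isolated fixed points, then apply equivariant geometrization (Meeks--Yau equivariant sphere theorem, equivariant JSJ, equivariant Ricci flow with surgery, or the orbifold theorem) and push the $\tau$-invariant spheres and tori down to spheres, projective planes, tori and Klein bottles. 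Your identification of the equivariant geometrization input as the deep step, and of the Alexandrov hypothesis as entering only through the isolated $c(\mathbb{R}P^2)$ local structure, matches the cited proof, so nothing further is needed.
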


We  now  direct  our  attention  to the   isometry  group  of  three dimensional  Alexandrov  spaces. 

\begin{theorem}\label{theo:isometryalex}
Let  $X$  be  an  $n$-dimensional  Alexandrov  space  of  Hausdorff dimension  $n$. 
\begin{itemize}
\item The  Isometry group  of $X$  is  a   Lie  Group which is compact if $X$ is compact as well. 
\item The  dimension  of   the  group  of  Isometries of  $X$  is at  most   
$$ \frac{n(n+1)}{2},$$
and  the bound  is  attained  if  and  only if  $X$  is  a  Riemannian  manifold.  
\end{itemize}

\end{theorem}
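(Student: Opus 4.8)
The plan is to establish the two bullets by different mechanisms: the Lie group/compactness assertion from the solution of Hilbert's fifth problem together with the conical local structure of Alexandrov spaces, and the dimension bound by an induction on $n$ carried out through the spaces of directions.

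For the first bullet I would begin by recording that a complete, locally compact length space is proper (Hopf--Rinow--Cohn--Vossen), so closed balls are compact; since isometries are $1$-Lipschitz they are equicontinuous, and Arzel\`a--Ascoli shows that $Iso(X)$ is closed in $C(X,X)$ and locally compact in the compact-open topology, and compact when $X$ is compact. To upgrade local compactness to a Lie group structure I would invoke the Gleason--Montgomery--Zippin theorem: it suffices to check that $Iso(X)$ has no small subgroups. Here the Alexandrov hypothesis is essential, and I would follow Fukaya--Yamaguchi. For each $p$ the isotropy group $Iso(X)_p$ acts faithfully on the space of directions $\Sigma_p$ --- indeed an isometry of an Alexandrov space is determined by its value and its induced map on $\Sigma_p$ at a single point, since it must send minimizing geodesics from $p$ to minimizing geodesics with the same initial direction and these reach every point by properness. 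Combining this faithful ``differential'' representation with Perelman's conical neighbourhood theorem rules out small subgroups. I expect this to be the main obstacle, since it rests on the fine infinitesimal geometry of Alexandrov spaces rather than on any soft compactness argument.

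For the dimension bound I would argue by induction on $n$, the cases $n \le 1$ being immediate (a $0$- or $1$-dimensional Alexandrov space is a point, interval, ray, line or circle, whose isometry group has dimension at most $1 = \tfrac{1\cdot 2}{2}$). Fixing $p$ and using that $Iso(X)$ is now a Lie group acting properly, the orbit $Iso(X)\cdot p \cong Iso(X)/Iso(X)_p$ is a closed embedded submanifold of $X$, hence of dimension at most $\dim X = n$. The space of directions $\Sigma_p$ is a compact $(n-1)$-dimensional Alexandrov space of curvature $\ge 1$, and the faithful representation above embeds $Iso(X)_p \hookrightarrow Iso(\Sigma_p)$, so the inductive hypothesis gives $\dim Iso(X)_p \le \tfrac{(n-1)n}{2}$. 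The orbit--stabilizer relation $\dim Iso(X) = \dim(Iso(X)\cdot p) + \dim Iso(X)_p$ then yields
	\[	\dim Iso(X) \;\le\; n + \frac{(n-1)n}{2} \;=\; \frac{n(n+1)}{2}.	\]

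Finally, for the equality case, I would observe that if the bound is attained then at every $p$ both estimates are sharp: the orbit is $n$-dimensional, hence open in $X$, and $\dim Iso(\Sigma_p)$ is maximal. Since isometries preserve the set $S_X$ of topologically singular points, an orbit meeting $S_X$ would be an $n$-dimensional, hence open, subset of $S_X$, contradicting the density of topologically regular points; thus $S_X = \emptyset$ and each $\Sigma_p$ is a topological $(n-1)$-sphere. By the inductive form of the statement the maximally symmetric $\Sigma_p$ is a compact constant-curvature Riemannian manifold, and being a topological sphere it must be the round $S^{n-1}$, so $p$ is metrically regular. As this holds for all $p$, we obtain $X = R_X$, which is a Riemannian manifold by Theorem \ref{theo:alexdim3}; conversely the space forms $\IR^n$, $S^n$ and $\IH^n$ realise $\dim Iso = \tfrac{n(n+1)}{2}$, so the bound is sharp.
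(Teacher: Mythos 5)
Note first that the paper does not really prove this theorem: its ``proof'' consists of citations to Fukaya--Yamaguchi \cite{fukayayamaguchi} (Lie group property), van Dantzig--van der Waerden \cite{dantzig} (compactness) and Galaz-Garc\'ia--Guijarro \cite{galazguijarroisometry} (dimension bound and rigidity), so your proposal is an attempt to reconstruct those cited arguments, and your induction through spaces of directions is indeed the mechanism of the last one. The serious gap is in the first bullet, precisely at the step you yourself flag as the main obstacle. After Gleason--Montgomery--Zippin, the problem is to show that $Iso(X)$ has no small subgroups. Your faithful isotropy representation $Iso(X)_p \hookrightarrow Iso(\Sigma_p)$ is proved correctly, and by induction it shows that the \emph{isotropy} groups have no small subgroups; but an arbitrary small subgroup $H \subset Iso(X)$ consists of isometries that are merely close to the identity and need not fix any point, so the isotropy representation simply cannot be applied to it. Under an \emph{upper} curvature bound one would produce a fixed point of $\overline{H}$ as the unique circumcenter of an orbit; under a lower bound circumcenters are not unique and there is no soft replacement --- this is exactly where the real work of Fukaya--Yamaguchi lies. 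As written, ``combining the faithful differential representation with Perelman's conical neighbourhood theorem rules out small subgroups'' names the difficulty rather than resolving it.

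The equality case has three smaller but genuine gaps. First, ``$n$-dimensional, hence open'' is not invariance of domain, since $X$ is not known to be a manifold near the orbit; the repair is the Burago--Gromov--Perelman strained-point theory: the set of points that are not $(n,\delta)$-strained has Hausdorff dimension at most $n-1$, so an $n$-dimensional orbit must meet the open manifold part formed by strained points, invariance of domain applies there, and transitivity of the group on the orbit then propagates openness everywhere. Second, your induction plus classical Riemannian rigidity only makes $\Sigma_p$ a round sphere of constant curvature $k \geq 1$, i.e.\ of radius at most $1$; the Euclidean cone over a sphere of radius $r<1$ shows that such a point can be topologically regular yet metrically singular, so ``hence $p$ is metrically regular'' does not follow. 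The fix is to use openness of all orbits: $X$ is connected, hence homogeneous, and since metrically regular points are dense in any Alexandrov space, homogeneity forces every $\Sigma_p$ to be the unit sphere (alternatively, quote Berestovskii's theorem that locally compact homogeneous Alexandrov spaces are Riemannian manifolds). Third, Theorem \ref{theo:alexdim3} concerns \emph{three-dimensional} Alexandrov spaces only, so it cannot be invoked for general $n$; the conclusion that $X = R_X$ is Riemannian should instead rest on Otsu--Shioya \cite{OtsuShioya} or on the homogeneity argument just described.
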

\begin{proof}
\begin{itemize}
\item The  first  part is  proved as  the  main Theorem, 1.1  in \cite{fukayayamaguchi}. The  second  part  follows  from  the  Van Dantzig-Van der Waerden  Theorem \cite{dantzig}.   
\item This  is  proved  as Theorem 3.1 in page  570.   
\end{itemize}
\end{proof}

\begin{remark}

It  is  proved  in  \cite{bagaevzhukova} that  the  same   lower bound for  the  dimension  of  the  isometry  group  holds  in general   for  Riemannian orbifolds. 

\end{remark}

\subsection{Hilbert-Smith  Conjecture}\label{section:hilbert-smith}

The  following  conjecture  was formulated  as  an  extension of Hilbert's  5th Problem: 

\begin{conjecture}[Hilbert-Smith conjecture]
If $G$ is a locally compact, topological group, acting faithfully on a topological manifold, then $G$ is a Lie group.
\end{conjecture}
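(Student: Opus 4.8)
The plan is to reduce the general statement to a single concrete assertion about the $p$-adic integers and then to attack that assertion with Smith--Yang cohomological dimension theory. First I would invoke the solution of Hilbert's fifth problem in its no-small-subgroups form: by the Gleason--Yamabe theorem a locally compact group is a Lie group precisely when it contains no arbitrarily small nontrivial subgroups, and every locally compact group has an open subgroup that is a projective limit of Lie groups. Replacing $G$ by a compact subgroup that witnesses the failure of the no-small-subgroups property, restricting the action to a single connected component of the manifold, and combining the projective-limit structure with Newman's theorem (for a fixed connected manifold there is an $\varepsilon>0$ such that no nontrivial periodic homeomorphism can have all of its orbits of diameter below $\varepsilon$, so faithfulness is inherited by the inverse limit) one extracts a closed subgroup isomorphic to $\mathbb{Z}_p$ acting faithfully. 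This is precisely the reduction exploited in \cite{RepScep} and \cite{pardon}, so it suffices to prove that $\mathbb{Z}_p$ admits no faithful action by homeomorphisms on a connected $n$-manifold $M$.

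For this core assertion I would argue by contradiction and analyse the orbit space $M/\mathbb{Z}_p$ through the tower of finite quotients, writing $\mathbb{Z}_p=\varprojlim \mathbb{Z}/p^k$. Applying Smith theory to each cyclic action $\mathbb{Z}/p^k$ and passing to the inverse limit over $k$ leads to the relations of C.~T.~Yang, according to which a faithful $p$-adic action raises the integral (\v{C}ech) cohomological dimension of the orbit space by exactly two, so that $\dim_{\mathbb{Z}}(M/\mathbb{Z}_p)=n+2$. Against this I would seek an independent, purely topological bound on the \emph{covering} dimension of the same orbit space, namely $\dim(M/\mathbb{Z}_p)\le n$. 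Since for paracompact spaces the integral cohomological dimension never exceeds the covering dimension, the two facts together yield $n+2=\dim_{\mathbb{Z}}(M/\mathbb{Z}_p)\le \dim(M/\mathbb{Z}_p)\le n$, an absurdity, and the action cannot exist.

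The hard part is exactly this last input: establishing the covering-dimension bound, equivalently ruling out the ``dimension-raising'' phenomenon for $p$-adic orbit spaces. In full generality this is the precise point at which the argument stalls, and it is the reason the conjecture as stated remains open. It can nevertheless be forced in low dimensions and, crucially, locally: for $n\le 2$ elementary surface topology suffices, while for $n=3$ Pardon \cite{pardon} supplies the required dimension control from the internal structure of three-manifolds. By the local-to-global reduction of Repov\v{s}--\v{S}\v{c}epin \cite{RepScep} the whole problem collapses to a statement about the orbit space in a neighbourhood of a single point, which is what permits the three-dimensional resolution to be transported, in the present work, to the singular setting of Alexandrov spaces. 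Thus I would organise the proof around the reduction to $\mathbb{Z}_p$ and the Smith--Yang identity, with the suppression of dimension raising for $p$-adic actions as the decisive --- and in arbitrary dimension still genuinely open --- step.
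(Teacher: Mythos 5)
The statement you were asked to prove is the Hilbert--Smith conjecture itself, which the paper records precisely as an open \emph{conjecture} and never proves: it only presents the reduction to the $p$-adic version via Gleason--Yamabe and Newman's theorems, Yang's theorem \cite{yang} that a faithful $\widehat{\Z}_p$-action on an $n$-manifold would force the orbit space to have integral cohomological dimension $n+2$, and the partial resolutions of Repov\v{s}--\v{S}\v{c}epin \cite{RepScep} for bi-Lipschitz actions and Pardon \cite{pardon1}, \cite{pardon2} for $n \le 3$. Your proposal reproduces exactly this framing --- the same reduction to $\widehat{\Z}_p$, the same Smith--Yang dimension mechanism, and an honest identification of the missing dimension bound on the orbit space as the genuinely open step --- so it is consistent with the paper's treatment; neither you nor the paper proves the conjecture, since no proof is known.
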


See  \cite{taofifth} for  a  modern  account. 

As a consequence of structural theorems of locally compact groups, such as  the Gleason-Yamabe theorem and its  predecessor by  Von  Neumann \cite{vonneumann}.   a counter-example to  the  Hilbert-Smith conjecture   must contain a copy of a p-adic group $\widehat{\Z}_p$, for some $p$, see \cite{LeeJoo}, thus giving the equivalent conjecture

\begin{conjecture}[Hilbert-Smith conjecture $p$-adic version]\label{conjecture:p-adic_HS}
For every prime $p$, there are no faithful actions of the $p$-adic group $\widehat{\Z}_p$ on a topological manifold.
\end{conjecture}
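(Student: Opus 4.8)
The plan is to prove the statement in the three-dimensional case, which is the setting relevant to this paper and the one resolved by Pardon \cite{pardon}; in dimensions $\geq 4$ the assertion remains a genuine open problem, but the argument I outline is purely \emph{local} and therefore also delivers the Alexandrov version (Theorem \ref{Theorem:HS-for-Alex}). Assume, for contradiction, that $\widehat{\Z}_p$ acts faithfully on a connected $3$-manifold $M$. By the structural reduction already recalled (Gleason--Yamabe together with Newman's theorem), it is enough to rule out an \emph{effective} action of the $p$-adic group itself, and since non-triviality of a group element is detected in a neighborhood of a moved point, I may restrict attention to a coordinate ball in which some orbit is non-trivial.

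First I would exploit the profinite description $\widehat{\Z}_p = \varprojlim_k \Z/p^k$. It furnishes elements arbitrarily close to the identity, each lying in a finite cyclic subgroup $\Z/p^k$ whose action on $M$ is severely constrained: by P.\,A.\ Smith theory the mod-$p$ cohomology of its fixed-point set is controlled, and the displacement produced by a generator of $\Z/p^k$ becomes finer, ``divisible by $p^k$'', as $k \to \infty$. The conceptual reason the problem is hard is the classical dimension-raising phenomenon of Yang: an effective $\widehat{\Z}_p$-action on an $n$-manifold raises the integral cohomological dimension of the orbit space by two, here to $5$. Pardon's insight is to derive a contradiction directly, without analysing this five-dimensional quotient.

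The heart of the argument, and the step I expect to be the main obstacle, is to manufacture from the hypothetical action a single integer-valued topological invariant --- a linking or intersection number of embedded surfaces, equivalently a degree --- forced to satisfy two incompatible conditions. On the one hand effectiveness, combined with three-dimensional intersection theory (general position and the structure of $2$-cycles in $M$), makes this invariant a \emph{nonzero} integer, because a non-trivial generator must move some surface in a way the intersection pairing detects. On the other hand, the same displacement factors through the finite quotients $\Z/p^k$ of $\widehat{\Z}_p$, which forces the invariant to be divisible by $p^k$ for every $k$. Since a nonzero integer cannot be infinitely $p$-divisible, the two requirements clash, and the action must be trivial. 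Constructing this invariant and verifying both halves is exactly where all the three-manifold input is consumed; this is Pardon's contribution and has no counterpart in higher dimensions.

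Finally, because every stage of the argument is carried out inside a coordinate ball, it transfers without change to the metrically regular locus of a three-dimensional Alexandrov space $X$, which is a genuine Riemannian $3$-manifold by Theorem \ref{theo:alexdim3}. There the set of topologically singular points is discrete, hence too thin to absorb the dimension-raising behaviour of a $p$-adic action, so a faithful $\widehat{\Z}_p$-action would already act non-trivially on the dense manifold part --- precisely what the local contradiction forbids. Hence $\widehat{\Z}_p$ can act only trivially, establishing both the manifold statement and its Alexandrov counterpart.
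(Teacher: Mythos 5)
The statement you were asked to prove is recorded in the paper as a \emph{conjecture}, not a theorem: it is the $p$-adic form of the Hilbert--Smith conjecture for topological manifolds of arbitrary dimension, a well-known open problem. The paper never proves it; it only states it (as equivalent, via Gleason--Yamabe and Newman's theorems, to the original Hilbert--Smith conjecture) and then quotes the cases that are actually known: bi-Lipschitz actions (Repov\v{s}--\v{S}\v{c}epin, Theorem \ref{Theorem:RepScep}) and topological actions in dimension $n \leq 3$ (Pardon, Theorem \ref{Theorem:pardon}). So there is no proof in the paper to compare yours against, and your own admission that dimensions $\geq 4$ remain open already concedes that what you wrote is not a proof of the statement as posed.

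Even judged only as a proof of the $3$-dimensional case, your text has a genuine gap: all of the mathematical content is concentrated in the paragraph where you propose to ``manufacture a single integer-valued topological invariant'' that is simultaneously nonzero and divisible by $p^k$ for every $k$. You do not construct this invariant, do not specify which surfaces or cycles it pairs, and do not verify either of the two incompatible properties; you explicitly defer exactly this step to Pardon. Describing the intended shape of an argument and attributing its execution to a reference is a citation, not a proof --- which is how the paper itself treats the matter, except that the paper is candid that it is citing a theorem rather than proving a conjecture. Separately, your Alexandrov transfer slips on a point the paper is careful about: the locus to pass to is the set of \emph{topologically} regular points, whose complement is a discrete set by Theorem \ref{theo:alexdim3}; the reduction is that a continuous action trivial on this dense invariant manifold is trivial on all of $X$, so faithfulness descends and Pardon's theorem applies to that open manifold. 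The \emph{metrically} regular locus you invoke is the wrong object for a topological argument, since the metrically singular set can be dense (Remark \ref{remark:dense_singular}); it is relevant only for the bi-Lipschitz statement of Repov\v{s}--\v{S}\v{c}epin.
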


Conjecture \ref{conjecture:p-adic_HS} has been proven in different contexts. For example, if there is a notion of dimension which must be preserved, such as bi-Lipschitz actions of $\widehat{\Z}_p$ on Riemannian manifolds, where three notions of dimension coincide: Hausdorff dimension, cohomological dimension with integer coefficients and topological dimension. In such setting, the bi-Lipschitz condition tells us that the Hausdorff dimension on the quotient cannot decrease, but on the other hand a theorem by Yang \cite{yang}, tells us that the cohomological dimension of the quotient increases by two, leading to the following result:

\begin{theorem}[Repov\v{s}-\v{S}\v{c}epin \cite{RepScep}]\label{Theorem:RepScep}
There are no faithful actions by bi-Lipschitz maps of the $p$-adic group $\widehat{\Z}_p$ on a Riemannian manifold. 
\end{theorem}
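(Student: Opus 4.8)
The plan is to argue by contradiction: assume that $G = \widehat{\Z}_p$ acts faithfully by (uniformly) bi-Lipschitz homeomorphisms on a Riemannian $n$-manifold $M$, and compare three invariants of the orbit space $X = M/\widehat{\Z}_p$, namely its Hausdorff dimension, its covering (topological) dimension, and its integral cohomological dimension $\dim_{\mathbb{Z}}$. The entire contradiction is a clash between two computations of $\dim_{\mathbb{Z}} X$: the metric hypothesis will force $\dim_{\mathbb{Z}} X \le n$, whereas Yang's theorem on $p$-adic actions forces $\dim_{\mathbb{Z}} X = n+2$. Since $\widehat{\Z}_p$ is compact the action is proper and $X$ is metrizable; restricting to a compact $G$-invariant neighborhood of an orbit (orbits are compact), we may assume $X$ is a compact metric space, which is what the classical dimension comparisons require.

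First I would convert the metric hypothesis into an upper bound on $\dim_{\mathbb{Z}} X$. Using that $G$ is compact, I average the Riemannian distance against the Haar measure of $G$ to obtain a $G$-invariant distance $d'$ on $M$; uniform boundedness of the bi-Lipschitz constants (which follows from compactness of $\widehat{\Z}_p$) guarantees that $d'$ is bi-Lipschitz equivalent to the original metric, so $(M,d')$ still has Hausdorff dimension $n$, and now $G$ acts by $d'$-isometries. The orbit space then carries the genuine quotient metric $\bar d(Gx,Gy)=\inf_{g\in G} d'(x,gy)$, for which the projection $\pi:M\to X$ is $1$-Lipschitz. A Lipschitz surjection never raises Hausdorff dimension, so $\dim_H X \le \dim_H M = n$. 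Feeding this into the classical chain of inequalities for separable metric spaces—Szpilrajn's inequality $\dim X \le \dim_H X$, and the fact that integral cohomological dimension is dominated by covering dimension, with equality in the finite-dimensional compact case by Alexandroff's theorem—yields $\dim_{\mathbb{Z}} X = \dim X = \dim_H X \le n$. In particular the three notions of dimension coincide on $X$ and are at most $n$.

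It then remains to invoke the deep input. Yang's theorem \cite{yang} states that passing to the orbit space of an effective $p$-adic action raises the integral cohomological dimension by exactly two, so that $\dim_{\mathbb{Z}} X = \dim_{\mathbb{Z}} M + 2 = n+2$. Combining this with the bound from the previous paragraph gives $n+2 = \dim_{\mathbb{Z}} X \le n$, which is absurd; hence no faithful bi-Lipschitz action of $\widehat{\Z}_p$ on a Riemannian manifold can exist.

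The step carrying the genuine weight is Yang's cohomological dimension jump, which is the reason the $p$-adic obstruction can be detected at all; the metric half of the argument is soft and serves only to transport the bi-Lipschitz hypothesis into the clean upper bound $\dim_{\mathbb{Z}} X \le n$. The secondary technical points I would check carefully are that $X$ is indeed finite-dimensional and compact metric so that the Szpilrajn and Alexandroff comparisons apply verbatim, and that the averaged metric $d'$ is honestly bi-Lipschitz equivalent to the Riemannian one—precisely the place where uniform control of the bi-Lipschitz constants, guaranteed by compactness of $\widehat{\Z}_p$, is used.
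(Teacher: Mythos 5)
Your argument is the same one the paper has in mind (and the same as Repov\v{s}--\v{S}\v{c}epin's original proof, which the paper only sketches before the statement): average to an invariant metric, bound the Hausdorff dimension of the orbit space by $n$, chain through Szpilrajn and the domination of integral cohomological dimension by covering dimension, and collide this with Yang's value $n+2$. The skeleton is correct, but one load-bearing justification is false as stated: uniform boundedness of the bi-Lipschitz constants does \emph{not} follow from compactness of $\widehat{\Z}_p$. Continuity of the action only makes the function $g \mapsto \mathrm{Lip}(g)$ lower semicontinuous (if $g_n \to g$ then $d(gx,gy)=\lim d(g_nx,g_ny)\le \liminf \mathrm{Lip}(g_n)\,d(x,y)$), and a lower semicontinuous function on a compact space can be unbounded. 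The standard repair, and the place where the real work in \cite{RepScep} happens, is Baire category: the sets $G_k=\{g : \mathrm{Lip}(g)\le k \ \text{and} \ \mathrm{Lip}(g^{-1})\le k\}$ are closed and cover $\widehat{\Z}_p$, so some $G_k$ has interior; translating, a neighborhood of the identity has uniformly bounded constants, and such a neighborhood contains an open subgroup $p^m\widehat{\Z}_p\cong\widehat{\Z}_p$, which still acts faithfully because the restriction of an injective homomorphism is injective. Only after this reduction is your averaged metric $d'$ honestly bi-Lipschitz equivalent to the Riemannian one.

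The second problem is your reduction to a compact $G$-invariant neighborhood $N$ of an orbit: it is both unnecessary and harmful. Harmful, because Yang's theorem \cite{yang} concerns effective $p$-adic actions on ($n$-dimensional cohomology) \emph{manifolds}, and $N$ is not one; moreover, faithfulness of the restricted action is not automatic: its kernel is a closed subgroup, hence some $p^k\widehat{\Z}_p$, and excluding a nontrivial element that fixes an open set pointwise is precisely a Newman-type statement that is unavailable for $p$-adic groups (it is essentially the local form of the theorem you are trying to prove). Unnecessary, because the two inequalities you need require no compactness at all: $\dim_{\mathbb{Z}}\le\dim$ holds for paracompact spaces, and Szpilrajn's inequality $\dim\le\dim_H$ holds for every separable metric space. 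So run the argument on $X=M/\widehat{\Z}_p$ itself, metrized by the invariant quotient metric (which induces the quotient topology and makes $\pi$ $1$-Lipschitz); then $n+2=\dim_{\mathbb{Z}}X\le\dim X\le\dim_H X\le n$ is the desired contradiction, with Yang applied to the genuine manifold $M$.
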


The stronger setting of topological actions is much harder and has been proven only for small dimensions

\begin{theorem}[\cite{pardon1}, \cite{pardon}]\label{Theorem:pardon}
For every prime $p$, there are no faithful actions by homeomorphisms of the $p$-adic group $\widehat{\Z}_p$ on a topological manifold of dimension $n \leq 3$.
\end{theorem}

\begin{remark}
The $p$-adic group can be described as
	\[	\widehat{\Z}_p = \left\{	\sum_{n = 0}^\infty a_n p^n : a_n \in \{0, 1, \cdots, p-1\},	\right\}  	\]
so that $p^k \widehat{\Z}_p \subset \widehat{\Z}_p$ is an open, normal subgroup, with $\widehat{\Z}_p / p^k \widehat{\Z}_p \cong \Z_{p^k \Z}$, giving the inverse limit description $\widehat{\Z}_p = \lim_{\leftarrow} \Z_{p^k}$, moreover, the group $\widehat{\Z}_p$ is homeomorphic to the Cantor space $\{0, \cdots,p-1\}^\mathbb{N}$. Observe that there is a topological $2$-manifold with the cantor space $2^\mathbb{N}$ as its ends space, which is $\Sigma = S^2 \setminus C$, where  $C \subset S^2$ is a closed subset homeomorphic to $2^\mathbb{N}$. Thus, there is a faithful action of $\widehat{\Z}_2$ on $End(\Sigma) \cong 2^\mathbb{N}$ and every homeomorphism of $End(\Sigma)$ extends to a homeomorphism of the surface $\Sigma$, however, by Theorem \ref{Theorem:pardon}, such extensions cannot be promoted  to an  action of  $\widehat{\Z}_2$ on the  Freudenthal compactification.
\end{remark}

Hence, the weaker version of the $p$-adic Hilbert-Smith conjecture for Alexandrov spaces holds, and  we can  consider  the  following conjecture: 

\begin{conjecture}
If $G$ is a locally compact, topological group, acting faithfully on a finite dimensional Alexandrov space by homeomorphisms, then $G$ is a Lie group.
\end{conjecture}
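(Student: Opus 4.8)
The plan is to reduce the statement to the $p$-adic Hilbert--Smith conjecture for manifolds, applied to the dense open manifold of topologically regular points. First I would invoke the structure theory of locally compact groups already recalled above: by the Gleason--Yamabe theorem together with Newman's theorem, if $G$ were not a Lie group, then $G$ would contain a closed subgroup isomorphic to the $p$-adic integers $\widehat{\Z}_p$ for some prime $p$, acting faithfully on $X$. Thus it is enough to establish the Alexandrov version of Conjecture \ref{conjecture:p-adic_HS}, namely that $\widehat{\Z}_p$ admits no faithful action by homeomorphisms on a finite dimensional Alexandrov space $X$.

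Next I would use the stratification $X = R_X \sqcup S_X$ of Theorem \ref{theo:alexdim3}. Topological regularity of a point is a local topological property --- equivalently, it is detected by the homeomorphism type of the space of directions $\Sigma_p$, which is a topological invariant of the pointed local structure --- so any self-homeomorphism of $X$ carries $R_X$ onto $R_X$ and $S_X$ onto $S_X$. By Perelman's conical neighborhood theorem the open dense set $R_X$ is a topological manifold, hence a faithful action of $\widehat{\Z}_p$ on $X$ restricts to an action of $\widehat{\Z}_p$ on $R_X$ by homeomorphisms. This restricted action is again faithful: if $g \in \widehat{\Z}_p$ fixes $R_X$ pointwise, then by continuity $g$ fixes $\overline{R_X} = X$, so $g$ is trivial. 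Therefore a faithful $\widehat{\Z}_p$-action on $X$ yields a faithful $\widehat{\Z}_p$-action on the manifold $R_X$.

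I would then feed this into the known manifold results. When $X$ has Hausdorff dimension three, $R_X$ is a topological $3$-manifold and $S_X$ is discrete, so Pardon's Theorem \ref{Theorem:pardon} forbids the faithful $\widehat{\Z}_p$-action on $R_X$, yielding a contradiction; this proves the statement unconditionally in dimension three, which is Theorem \ref{Theorem:HS-for-Alex}. If one instead assumes that the action on $X$ is bi-Lipschitz for the intrinsic metric, then its restriction to the Riemannian manifold $R_X$ is bi-Lipschitz, and Theorem \ref{Theorem:RepScep} of Repov\v{s}--\v{S}\v{c}epin rules it out in every dimension, since Hausdorff, cohomological and topological dimension agree on $R_X$.

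The main obstacle is precisely the manifold input in the purely topological setting: for Hausdorff dimension $n \geq 4$ the argument reduces the statement to the topological $p$-adic Hilbert--Smith conjecture for $n$-manifolds, which is itself open. This is why the finite dimensional statement can only be posed as a conjecture here, while the three dimensional case is a theorem. A secondary point to verify carefully is that restriction to $R_X$ preserves both continuity and faithfulness of the $\widehat{\Z}_p$-action; this is where the density of $R_X$ and the fact that $S_X$ is closed and nowhere dense are essential, since they prevent a nontrivial group element from being supported entirely on the singular set.
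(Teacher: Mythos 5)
Your proposal is correct and follows essentially the same route as the paper: reduction to the $p$-adic case via Gleason--Yamabe and Newman, restriction of a putative faithful $\widehat{\Z}_p$-action to the open dense manifold of topologically regular points (faithful by density, invariant because regularity is a local topological property), Pardon's theorem to settle dimension three, and Repov\v{s}--\v{S}\v{c}epin for the bi-Lipschitz case. You also correctly identify, as the paper does, that for topological actions in dimension $n \geq 4$ the argument only reduces the statement to the open Hilbert--Smith conjecture for $n$-manifolds, which is exactly why the general finite dimensional statement remains a conjecture while the three dimensional case is Theorem \ref{Theorem:HS-for-Alex}.
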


A  consequence of Theorem \ref{Theorem:pardon}, gives us a  three dimensional  case  of  this result \ref{Theorem:HS-for-Alex}

\subsection{Lie Groups and  Lattices}
A  real  Lie  group  is a Hausdorff topological  group  which is a  $C^{\infty}$ smooth  manifold for  which  the  multiplication  and  inversion  are  smooth  maps.

Recall  that  by  Haar's  Theorem  there  exists  up to a positive multiplicative constant, a unique countably additive, nontrivial measure $\mu$ on the Borel subsets of $G$ which is  right  translation  invariant, has  finite  values  on  compact  subsets, and  is inner  and  outer regular. 

\begin{definition}
    A  lattice in a  Lie  Group  $G$  is  a  discrete  group $\Lambda$ for  which the quotient space  $G/\Lambda$ has  finite measure. 
    \end{definition}

The  right  translate $\mu(g^{-1}\quad )$by  an  element  in  the group $g^{-1}$   of a  right  invariant  Haar  measure $\mu$ is  a right  invariant  Haar  measure,  and  hence  there  exists   a  real    function $\Delta$ satisfying
$$ \mu(g^{-1}S) = \Delta(g)\mu (S).$$

\begin{definition}
    A  group  is  said   to be  unimodular  if  the  function $\Delta$  is the constant   function  $1$. 

\end{definition}

\begin{example}
The  following  families  of  examples   of  Lie  groups  will   be  the  main focus   of  the  article. 
\begin{itemize}
\item By the  second  Myers-Steenrod  theorem \cite{myers-steenrod},  the  isometry  group  of  a  smooth  manifold  is a  Lie  group. 

\item By  the  Montgomery-Zippin theorem \cite{montgomeryzippin},   if a topological group  acts by  isometries  transitively  on  a  finite  dimensional, locally  compact, connected  and  locally  connected  metric  space, then  it  is  a  Lie  group.
\item By  results  of  Bochner \cite{bochner}, the  isometry  groups  of a smooth manifold  of  constant  negative  Ricci  curvature  is  finite.

On the other  hand, a locally compact subgroup of $C^{2}$ diffeomorphisms  of  a  $C^{2}$ manifold  for  which  the  trivial  subgroup is  the  only  subgroup  with  fixed points  with  nonempty  interior must  be a  Lie  group  \cite{bochnerlocal}.
\end{itemize}
\end{example}

In  the  subsequent  sections  of  this  article,  we  will  examine  the  isometry  groups  of  three  manifolds  and  their  lattices, as  well  as  the  isometry  groups  of  orbifolds  or Alexandrov  spaces.

\subsection{Discrete groups of isometries} 

If $\widetilde{X}$ is a complete, simply connected, Riemannian manifold and $\Gamma \subset Iso(\widetilde{X})$ a discrete subgroup of isometries, then $X / \Gamma$ has the structure of a complete Riemannian orbifold. The covering map $\rho : \widetilde{X} \rightarrow X$ satisfies the property that $\rho(x) = \rho(y)$ if and only if $\gamma x = y$ for some $\gamma \in \Gamma$. An isometry $\phi : X \rightarrow X$ lifts to $\widetilde{\phi} : \widetilde{X} \rightarrow \widetilde{X}$ such that $\rho \circ \widetilde{\phi} = \phi \circ \rho$ and for every $\gamma \in \Gamma$ and $x \in \widetilde{X}$ we have
	\[	\rho \circ \widetilde{\phi}(\gamma x) = \phi \circ \rho(\gamma x) = \phi \circ \rho(x) = \rho \circ \widetilde{\phi}(x),	\]
thus there exist $\gamma' \in \Gamma$ such that
	\[	\widetilde{\phi}(\gamma x) = \gamma' \widetilde{\phi} (x),	\]
that is $\widetilde{\phi} \circ \gamma \circ \widetilde{\phi}^{-1} = \gamma'$ and $\widetilde{\phi} \in N = N_{Iso(\widetilde{X})}(\Gamma)$. This tells us that we have the isomorphism 
	\[	Iso(X) \cong N_{Iso(\widetilde{X})}(\Gamma) / \Gamma.	\]

\begin{proposition}\label{prop:centralizer-normalizer}
If $G$ is a Lie group and $\Gamma \subset G$ is a discrete subgroup with associated normalizer and centralizer subgroups
	\[	N = N_G(\Gamma), \qquad Z = Z_G(\Gamma),	\]
then the connected components of $N$ and $Z$ coincide. Moreover, if $Z_0$ denotes such connected component, the projection $\pi : N \rightarrow N/\Gamma$ is a covering Lie group homomorphism such that $\pi(Z_0) \subset N/\Gamma$ is the connected component of the identity.
\end{proposition}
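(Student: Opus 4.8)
The plan is to separate the two claims: the coincidence of identity components, which rests on discreteness of $\Gamma$, and the covering property, which is a standard consequence of Cartan's closed subgroup theorem together with the behaviour of quotients by discrete normal subgroups.

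First I would record the ambient structure. A discrete subgroup of a Lie group is closed, so $\Gamma$ is closed in $G$; consequently $N = N_G(\Gamma)$ is a closed subgroup and hence a Lie group by Cartan's theorem, and likewise $Z = Z_G(\Gamma)$ is closed. By definition of the normalizer we have $\Gamma \subseteq N$ with $\Gamma$ normal in $N$, so $N/\Gamma$ carries a Lie group structure and $\pi : N \to N/\Gamma$ is a surjective homomorphism whose kernel $\Gamma$ is discrete; a surjective Lie group homomorphism with discrete kernel is a covering map, which already yields the assertion that $\pi$ is a covering Lie group homomorphism.

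The central step is the equality $N_0 = Z_0$ of identity components. The inclusion $Z \subseteq N$ gives $Z_0 \subseteq N_0$ at once. For the reverse inclusion I would exploit discreteness: fixing $\gamma \in \Gamma$, consider the continuous conjugation map $c_\gamma : N \to G$, $g \mapsto g\gamma g^{-1}$. Because every $g \in N$ normalizes $\Gamma$, the image of $c_\gamma$ lies in the discrete set $\Gamma$; restricted to the connected component $N_0$ the map is therefore locally constant, hence constant, and its value at the identity is $\gamma$. Thus $g\gamma g^{-1} = \gamma$ for all $g \in N_0$ and all $\gamma \in \Gamma$, so $N_0 \subseteq Z$; since $N_0$ is connected and contains $e$, in fact $N_0 \subseteq Z_0$. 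The two inclusions give $N_0 = Z_0$, the common component denoted $Z_0$ in the statement.

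Finally I would identify the image $\pi(Z_0)$. A covering map is open, and the identity component $N_0$ is open in the Lie group $N$; hence $\pi(N_0)$ is an open, connected subgroup of $N/\Gamma$ containing the identity. Since an open subgroup is automatically closed, $\pi(N_0)$ is a clopen connected subgroup through $e$, and such a subgroup can only be the identity component of $N/\Gamma$. Substituting $N_0 = Z_0$ yields that $\pi(Z_0)$ is the connected component of the identity, as desired. The only genuinely delicate point is the constancy of $c_\gamma$ on $N_0$, which is where discreteness of $\Gamma$ is essential; the remaining steps are routine applications of the closed subgroup theorem and of the structure of quotients by discrete normal subgroups.
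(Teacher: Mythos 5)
Your proof is correct and follows essentially the same route as the paper's: both deduce $N_0 = Z_0$ from the observation that conjugating a fixed $\gamma \in \Gamma$ gives a continuous map into the discrete set $\Gamma$, hence a constant one, and both then identify $\pi(Z_0)$ as the identity component of $N/\Gamma$ using openness of the covering $\pi$. The only difference is cosmetic: the paper runs the constancy argument along 1-parameter subgroups of $N$ (implicitly using that these generate $N_0$), whereas you run it directly on the connected component $N_0$ via local constancy, which is marginally cleaner and closes that small implicit step.
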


\begin{proof}
If $g_t \in N$ is a 1-parameter subgroup and $\gamma \in \Gamma$, then $g_t \gamma g_{-t} = \gamma_t$ is a 1-parameter group in $\Gamma$, but as $\Gamma$ is discrete, $\gamma_t = \gamma$ and this tells us that $g_t \in Z$, so that $Z_0 = N_0$. Now, $N$ is a Lie group having $\Gamma$ as a normal, discrete subgroup so that the projection map
	\[	\pi : N \rightarrow N/\Gamma	\]
is a homomorphism of Lie groups and a covering map. In particular, $\pi(N_0)$ is a connected, open Lie subgroup of the same dimension of $N/\Gamma$ and thus it is the connected component of the identity.
\end{proof}

\subsection{Lattices on semisimple Lie groups of higher rank}\label{section:lattices}

Recall that an algebraic $\R$-group is a subgroup $\mathbb{G}_\C \subset GL_m(\C)$ obtained as solutions of polynomial equations with coefficients over $\R$ and $\mathbb{G}_\R = \mathbb{G}_\C \cap GL_m(\R)$ is a real Lie group. In this context we say that $\mathbb{G}_\R$ is a real form of $\mathbb{G}_\C$ or that $\mathbb{G}_\C$ is a complexification of $\mathbb{G}_\R$. The local structure of a Lie group is captured by its Lie algebra, so that two groups are locally isomorphic if and only if they have isomorphic Lie algebras, and thus, they can be obtained one from the other by taking connected components and topological covers.

The class of semisimple Lie groups can be  defined  as  the  class of  Lie groups which are constructed up to covers and connected components from algebraic $\R$-groups which split as products $G_1 \times \cdots \times G_k$, where each factor $G_j$ is simple.  This  definition is  equivalent  to  other  definitions  of  semisimple Lie groups aviailable  in  the  literature, see  \cite{zimmerbook}. 

\begin{remark}
Not every semisimple Lie group is an algebraic group as the group $SL_2(\R)$ has a universal cover, denoted by $\widetilde{SL}_2(\R)$, which is  homeomorphic to $\R^3$ and it cannot be embedded in any linear group $GL_m(\C)$ as a Lie subgroup. In the same way, not every semisimple Lie group splits as a product of simple Lie groups, as the example $SO(4)$ shows, but its universal cover is isomorphic to the product $SU(2) \times SU(2)$. In general, given a connected semisimple Lie group $G$, with center $Z(G)$, then the quotient $G/Z(G)$ is a connected, linear algebraic group which splits as a product of simple groups and it is locally isomorphic to $G$. Thus it is common for some results to ask for the group to be centerless.
\end{remark}

In the context of algebraic groups defined over a field $k$, the concept of $k$-rank is the maximal abelian subgroup which can be diagonalized over $k$. Thus, for a complex algebraic group, the $\C$-rank is the dimension of a maximal subgroup isomorphic to a complex torus $(\C^*)^l$ and we are particularly interested in the real rank of a real form. We can observe that the real rank of a product $G_1 \times \cdots \times G_k$ is the sum of the real rank of its factors $G_j$ and we can give some explicit examples.

\begin{example}
The following is a complete list, up to local isomorphism, of complex, simple Lie groups and some examples of their real forms:
	\begin{enumerate}
		\item The group $SL_n(\C)$, has $\C$-rank $n-1$ and has the groups $SU(p,q)$ and $SL_n(\R)$ as real forms, with real rank equal to $\mathrm{min}\{p,q\}$ and $n-1$ respectively.

		\item The group $SO(n,\C)$ has $\C$-rank $\left\lfloor \frac{n}{2} \right\rfloor$ and has the groups $SO(p,q)$ as real forms, having real rank equal to $\mathrm{min}\{p,q\}$.

		\item The group $Sp(2n,\C)$ has $\C$-rank $n$ and has the groups $Sp(p,q)$ and $Sp(2n,\R)$ as real forms, with real rank equal to $n$ and $\mathrm{min}\{p,q\}$ respectively.

		\item The exceptional complex groups $G_2(\C)$, $F_4(\C)$, $E_6(\C)$, $E_7(\C)$, $E_8(\C)$ have $\C$-rank determined by the corresponding subindex.
	\end{enumerate}
\end{example}

\begin{remark}
Between the possible real forms of a complex semisimple Lie group, there is one and only one compact real form up to conjugacy and such compact form has a compact universal cover, so the compactness property survives in the process of passing to a cover. We can thus, speak of the compact factors of a real semisimple Lie group. Moreover, the rank of a compact Lie group, defined as the dimension of a maximal torus $(S^1)^l$ contained in the group, equals the rank of its complexification and has real rank equal to $0$. Finally, given a compact, connected, Lie group $C$, there is a finite cover of $C$ that splits as $G \times T$, where $G$ is an algebraic semisimple Lie group, and $T$ is a torus. 
\end{remark}

\begin{definition}[Higher Rank Lattice]
A semisimple Lie group is said to have higher rank if its real rank is greater than or equal to $2$. Moreover, if a semisimple Lie group has a complexification whose simple factors are all locally isomorphic, the group is called isotypic.
\end{definition}

Isotypic Lie groups are important  because we can construct  irreducible lattices in them,  which don't split as a product of lattices in the simple factors.

\begin{example}
If $\sigma : \mathbb{Q}(\sqrt{2}) \rightarrow \mathbb{Q}(\sqrt{2})$ is the non-trivial Galois automorphism and $Q(x,y,z,t) = x^2 + y^2 - \sqrt{2} z^2 - \sqrt{2} t^2$, $\sigma(Q) =  x^2 + y^2 + \sqrt{2} z^2 + \sqrt{2} t^2$. The groups $G = SO(Q,\R) \cong SO(2,2)$ and $K = SO(\sigma(Q),\R) \cong SO(4)$ are semisimple Lie groups, with $K$ compact and $G$ of real rank equal to $2$. If we consider the integral points in $G$, that is, the group $\Gamma = SO(Q,\Z(\sqrt{2})) \subset G$, then the group,
	\[	\widehat{\Gamma} = \{ (g , \sigma(g)) \in G \times K : g \in \Gamma \}	\]
is discrete. In fact there is an $\R$-group $\mathbb{H}_\C \subset GL_m(\C)$ such that $\mathbb{H}_\R = G \times K$ and $\mathbb{H}_\Z = \widehat{\Gamma}$, in particular, it is a lattice which is co-compact. As the projection $G \times K \rightarrow G$ has compact Kernel and maps $\widehat{\Gamma}$ onto $\Gamma$, thus $\Gamma \subset G$ is discrete and thus, a co-compact lattice in $G$.
\end{example}

\begin{remark}
The previous example captures the general behaviour of irreducible lattices in isotypic semisimple Lie groups.  In fact, isotypic, semisimple Lie groups are the only cases of semisimple Lie groups admiting irreducible lattices and such lattices are constructed with the method of the previous example, but with higher degree extension fields $k/\mathbb{Q}$. See \cite{morris}, Section 5.6 for the details of the previous example and the construction in general.
\end{remark}

\section{Detailed Strategy  for  the Proof  of Main  Theorem }\label{section:strategy}
We are interested in the particular case where  given a  three-manifold  $X$, the  universal cover $\widetilde{X}$ is a homogeneous space, i.e. its group of isometries acts transitively  on $\widetilde{X}$ , and $X$ has finite volume. A general setting where this is achieved is when we consider $G = \widetilde{X}$ a simply-connected Lie group with a right-invariant (or left-invariant) riemannian metric and $X = G/\Gamma$, with $\Gamma \subset G$ a lattice subgroup (i.e. $\Gamma$ is a discrete subgroup such that $G/\Gamma$ has finite, left $G$-invariant volume). As there is an embedding $G \subset Iso(G)$, we have that $N_G(\Gamma) \subset N_{Iso(G)}(\Gamma)$, but it could happen that $N_G(\Gamma) / \Gamma$ is strictly smaller than $Iso(G/\Gamma)$. On the other hand, we can extend $\Gamma$ to a discrete subgroup of $Iso(G)$ which is not completely contained in $G$, so that the isometry group $Iso(G/\Gamma)$ is decreased.

In the following sections, we will examinate this phenomenon in the Thurston Geometries, and determine the possible isometry groups of the corresponding finite-volume orbifolds.

The  main result \ref{zimmer:3man} is proved by  a  case by  case schema organized around  the  eight geometries, which  we present  in a  resumed form  below. 

Within the    most  homogeneous three of  them (spherical, hyperbolic and  euclidean), the  Theorem is   a consequence  of  classical  results,  which  we  gather  from  references and  include the  classification of  spherical  manifolds due to Seifert-Threlfall, Borel's density  Theorem for hyperbolic  geometry, and  Bieberbach's  theorems for the  euclidean case, respectively. 

In the  cases  of  $Nil$ and  $Sol$,  we elaborate arguments in  this  text  which  include the  restrictive  conditions  for  the  existence of  lattices  in solvable  groups,  as  well as  an algebraically rigid  classification of  discrete subgroups  of isometries  of nilmanifolds. This  is  the  most  original  contribution  among  the  proofs presented  in this  note. 

For  the  geometry $\widetilde{SL_{2}(\mathbb{R}}$, which  is  given as  a  non-trivial  central  extension of $PSL_{2}(\mathbb{R})$ by $\mathbb{Z}$, the arguments  include  an analysis  of  the  behaviour  of the  fixed  point  set  of  discrete subgroups of  isometries  of  the  visual compactification of  the  hyperbolic  plane. 

The  products $S^{2}\times \mathbb{R}$  and  $\mathbb{H}^{2}\times \mathbb{R}$  exhibit  differences  in the  main  argument. For the  latter  the  projection  to  the  hyperbolic  factor  is  analyzed,  and  the  result  is  reduced to  the  observation  that  the  isometry groups  of  a two  dimensional hyperbolic  orbifold  are  finite, which  are  combined together  with the  fact  that discrete subgroups of  isometries   on $\mathbb{H}^{2}\times \mathbb{R}$ project  to  isometry  groups of  $\mathbb{H}^{2}$ producing finite  volume  orbifolds.

 For  the former, $S^{2}\times \mathbb{R}$, the    key  remark is  that  a  discrete cocompact  isometry  group  can  be  realized  as  subgroup  of  $SO(3)\times  S^{1}$.  This  result  is  followed  from Tollefsons' classification  results \cite{tollefson} of  the  groups  acting  on three manifolds  with  that geometry.

 \subsection{Overview of Classification Results}
  
  We  notice  that  the   classification  of isometry groups  presented  here  has as  consequence   classification results   for  finite group actions  on  three  manifolds.  There  are  three general kinds of  behaviour: 
  \begin{itemize}
  \item Within  the geometries   whose  isometry groups   are  extensions  involving a  discrete and  finite  volume subgroup  of $SL_{2}(\mathbb{R})$,  any  finite  group  can  act by  isometries.  This  concerns  the  geometries  $\mathbb{H}^{3}$,  $\widetilde{SL_{2}}$ and  $\mathbb{H}^{2}\times  \mathbb{R}$.   
  \item For  the  case  of spherical  factors and  the euclidean case,  the  classical results by  Tollefson, Seifert-Threlfall  and  Bieberbach  theorems exhaust the  class of  finite  groups acting by  isometries. This  is  recorded  in \cite{mccullough}, \cite{tollefson}, \cite{nowacki}. There  exists  a  classification  of  free (topological) finite  group  actions  in \cite{leeshinyokura}, \cite{hajokimlee}.
  
  \item For Sol,  any  cyclic  group  can  be  realized   as  a  consequence  of  the discussion  of  example \ref{ex:sol-lattice}. For the  case  Nil,   \ref{prop:nil-discrete-proj} gives  a  classification depending  on the  results  developed  in this  note.  
 
  Notice  that  there  exists  a  classification  of  topological free  actions  of  finite  groups on Nil and  Sol manifolds based  on  the  $p$- rank  and P.A. Smith theory \cite{LeeJoo}, \cite{leeshinyokura}. 
  
  \end{itemize}
  
\subsection{Proof  Schema for  Theorem \ref{zimmer:3man}.}
  In this  short  subsection we  summarize the  detailed chain  of  implications leading  to the  proof  of  Theorem  \ref{zimmer:3man}.

{ \bf Euclidean Geometry}

According  to the  Bieberbach Theorems,  there  exists a discrete free  abelian  subgroup  of  translations $T$, which  has  rank less  or  equal to three.  The  assertion  of  theorem \ref{zimmer:3man}  for  the  isometry  group  of  $R^{3} / \Gamma$ will be  verified by  examining  the  rank  of  the  translation  subgroup  $T$, and  discarding rank two and  one  by  producing  a  quotient  of  infinite  volume. 

{\bf Nil Geometry.}

For  the Nil geometry, Theorem \ref{zimmer:3man} is  a  consequence of \ref{lema:nil-infinite-volume}, characterizing  lattices  of infinite  volume, and Proposition \ref{prop:nil-discrete-proj} giving  an exact  sequence between isometry groups.

{\bf Spherical Geometry.}

The  theorem  is  a  direct  consequence  of  the  classification  of finite  group  actions  on three -manifolds, as well  as  the  determination  of  the components  of the  isometry  groups \ref{theorem:homotopytypespherical}. Notice  that  there  are  neither   non-discrete  subgroups  of isometries nor groups  whose  quotient shows  infinite  volume within the spherical geometry. 

{\bf $S^2\times \mathbb{R}$ Geometry.}

The  theorem  is consequence  of  the  splitting  of  the isometry  groups, as well  as  the  characterization  of  discrete subgroups  of  isometries in \ref{lema:discrete_subgroups_S2_R}, finally  concluding  in \ref{Thm:general_isometry_gropu_S2_R}.

{\bf Sol Geometry.}

The  theorem  is  stated  as Corollary \ref{cor:solv-isometry-group}, which  depends  on the  determination of  centralizers  in \ref{lema:solv-centralizer}, and  the  determination  of  finite  volume  in \ref{prop:solv-discrete-projection}.

{\bf Hyperbolic Geometry.}

The  theorem  is  direct  consequence  of  Lemma \ref{lema:hyperbolic-isometry-group},  which  is  in  turn  consequence  of Borel Density, or  the  preceding  argumentation there. See also \cite{bochner}.

{\bf $\mathbb{H}^2\times \mathbb{R}$ Geometry.}
The  theorem  is  stated  in  \ref{teo:hyperbolic-fibration-isometry-group},  and  it   is  consequence  of  Theorem\ref{teo:isometry-group-product}, stating  the  splitting  of  isometry  groups  of  the factors.

{\bf $\widetilde{SL_{2}(\mathbb{R})}$ Geometry. }

The  Theorem is  also  stated  in \ref{teo:hyperbolic-fibration-isometry-group}, and it  is  consequence  of Lemma \ref{lema:hyperbolic-isometry-group}, and  Proposition \ref{prop:hyperbolic-discrete-projections}.

\section{Euclidean Geometry}\label{section:euclidean}
Recall  that  a three  manifold  is  Euclidean  if  it  is  locally  isometric  to  the  Euclidean three  dimensional  space  $\mathbb{R}^3$. The  isometry  group  of  the  three  dimensional  space  is  the  semidirect  product $E(3)= \mathbb{R}^{3}\rtimes  O(3).$

Let  $\Gamma $  be  a  discrete subgroup  of isometries $E(3)$. It  is  a consequence  of   the  Bieberbach  Theorems,  as  interpreted  by Nowacki \cite{nowacki},  that  there  exists  a  free  abelian group $T$ of rank  $\leq 3$  and having finite index in $\Gamma$.  

\begin{proof}[End  of  proof  of  Theorem  \ref{zimmer:3man} for  euclidean  geometry]
We  will  now  verify  the  assertion  of  theorem \ref{zimmer:3man}  for  the  isometry  group  of  $R^{3} / \Gamma$ by  examining  the  rank  of  the  translation  subgroup  $T$.  
\begin{itemize}
\item If  the  rank  of  $T$ is one, then $\Gamma$  is  a  finite  extension  of $\mathbb{Z}$, and $\mathbb{R}^{3}/T$ is   either  the   interior  of  a solid  torus  or  the topological interior  of  a   solid Klein  Bottle,  depending  on   the  orientability, where  the  generator  of  $T$  acts  as a screwdriver  isometry (combination  of  a rotation  around  an axis  and  a translation along a parallel  direction). It  follows  that  $   \mathbb{R}^{3}/T$  has  infinite  volume. 

\item If the  rank  of  $T$ is  two, then  $ \mathbb{R}^{3}/T$  is  the  total  space  of  a   line  bundle  over   either the  torus  or  the  Klein  bottle,  and  $  \mathbb{R}^{3}/T$ has  infinite  volume. 

\item  If  the  rank  of  $T$  is  three, then  the  isometry  group  of  $E(3)/ \Gamma$  is  a  finite   extension of a  rank  three torus by  a  finite  subgroup. 
\end{itemize}
\end{proof}
\subsection{Classification}
The  classification  of  (topological) finite  group  actions  on  the  torus by  isometries  has  been  concluded  by  work of Lee, Shin and  Yokura \cite{leeshinyokura} and Ha, Jo, Kim  and  Lee \cite{hajokimlee}.
 
It  follows  from the  Bieberbach  theorems  that  any  topological  action  on the  three  torus  is topologically conjugated  to  an  isometry;  moreover,  by the  fact  that  the  three  dimensional torus  is sufficiently  large in  the  sense  of  Heil  and  Waldhausen, \cite{waldhausenlarge}, any  homotopy  equivalence  is  homotopic  to a  homeomorphism, and  any two homotopic  homeomorphisms are isotopic. 
 
 {\bf Connected  components}

The  isometry  groups  of  co-compact euclidean  orbifolds  have  been  determined  by  Ratcliffe  and Tschantz    \cite{ratcliffetschantz}, in  Theorem  1  and  Corollaries  1   and 2  in pages  46  and  47,  which  we state   now  for  later  reference. 

\begin{theorem}\label{orbifold:euclidean}
The isometry group  of a  cocompact  euclidean orbifold $\mathbb{R}^{3}/\Gamma$ is  a  compact  Lie  group  whose identity  component  is  a  Torus  of  dimension equal  the  first  Betti number  of  the  group $\Gamma$, which corresponds to the rank of the abelian group $\Gamma / [\Gamma, \Gamma]$.
\end{theorem}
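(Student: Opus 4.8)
The plan is to prove that the isometry group of a cocompact Euclidean orbifold $\mathbb{R}^3/\Gamma$ is a compact Lie group with identity component a torus of the stated dimension. I would begin by recalling from the earlier discussion that $\mathrm{Iso}(\mathbb{R}^3/\Gamma) \cong N/\Gamma$, where $N = N_{E(3)}(\Gamma)$ is the normalizer of $\Gamma$ in $E(3)$, and that by Proposition \ref{prop:centralizer-normalizer} the identity component of $N/\Gamma$ is the image of $Z_0$, the connected component of the centralizer $Z = Z_{E(3)}(\Gamma)$. So the entire problem reduces to identifying $Z_0$ and showing that $N/\Gamma$ is compact.

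\textbf{Reduction to the centralizer of the translation lattice.} By the Bieberbach structure recalled above, $\Gamma$ contains a finite-index free abelian translation subgroup $T$, which in the cocompact case has rank $3$, so $T \cong \mathbb{Z}^3$ spans $\mathbb{R}^3$. The first step is to compute $Z_0$ by analyzing which one-parameter subgroups of $E(3)$ centralize $\Gamma$. A one-parameter subgroup of $E(3) = \mathbb{R}^3 \rtimes O(3)$ that commutes with every element of $\Gamma$ must in particular commute with the full-rank translation lattice $T$; an easy computation shows the rotational part of such a subgroup must be trivial, so $Z_0$ consists of translations. Thus $Z_0 \subset \mathbb{R}^3$ is the subgroup of translations commuting with all of $\Gamma$, i.e. translations by vectors fixed (up to the translation part) by the linear holonomy of $\Gamma$. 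Concretely, if $\pi\colon \Gamma \to O(3)$ is the holonomy homomorphism, then a translation by $v$ commutes with $\gamma = (a, A) \in \Gamma$ precisely when $Av = v$, so $Z_0$ is the image in $\mathbb{R}^3/T$ of the common fixed subspace $V = \{v : Av = v \text{ for all } (a,A)\in\Gamma\}$.

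\textbf{Identifying the dimension and compactness.} The identity component of $\mathrm{Iso}(\mathbb{R}^3/\Gamma)$ is then $V/(V\cap T)$, which is a torus since $T$ is a full-rank lattice and $V$ is a rational subspace (being defined by the integer, or at least commensurably integral, holonomy matrices). I would identify $\dim V$ with the first Betti number of $\Gamma$ by a cohomological computation: the translation-valued $1$-cocycles up to coboundaries, or equivalently the coinvariants/invariants of the holonomy action, compute $H_1(\Gamma;\mathbb{R})$, and the rank of $\Gamma/[\Gamma,\Gamma]$ equals $\dim V$ because abelianizing kills exactly the directions on which the holonomy acts nontrivially. This matches the quoted formula that the torus dimension equals the first Betti number, i.e. the rank of $\Gamma/[\Gamma,\Gamma]$. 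Compactness of the whole isometry group then follows because $N/\Gamma$ is a discrete extension: the quotient $\mathbb{R}^3/\Gamma$ is compact, its isometry group is a Lie group by the earlier generalities, and a discrete group of isometries modulo $\Gamma$ together with the compact torus identity component forces finitely many components.

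\textbf{Main obstacle.} Since the statement is explicitly attributed to Ratcliffe and Tschantz and is quoted \emph{for later reference}, the cleanest route is simply to cite \cite{ratcliffetschantz} rather than reprove it, and I would do so. If a self-contained argument were wanted, the main delicate point is not the identity-component computation but verifying that the number of connected components is finite, i.e. that $N/\Gamma$ is actually compact: one must rule out infinitely many homotopy classes of isometries, which amounts to showing that the normalizer of $\Gamma$ in the affine group is a finite extension of $\Gamma \cdot Z_0$. This finiteness is where the rigidity of the integral holonomy representation (the finiteness of $\mathrm{Out}(\Gamma)$ arising through the Bieberbach rigidity theorem) does the real work, and it is the step I would be most careful with.
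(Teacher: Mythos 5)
The paper does not prove this theorem at all: it is stated as a quotation of Ratcliffe--Tschantz \cite{ratcliffetschantz} (Theorem 1 and Corollaries 1 and 2 there), so your primary recommendation --- simply cite the reference --- is exactly what the paper does. Your supplementary sketch of the identity component is also essentially right and consistent with the paper's general machinery: the centralizer of $\Gamma$ in $E(3)$ reduces to translations by the holonomy-fixed subspace $V$ (since centralizing the full-rank lattice $T$ forces trivial rotational part), $V$ is rational with respect to $T$ because the finite holonomy group preserves $T$, so $\pi(Z_0) \cong V/(V \cap T)$ is a torus, and $\dim V = \operatorname{rank}(\Gamma/[\Gamma,\Gamma])$ by the five-term exact sequence for $1 \to T \to \Gamma \to H \to 1$ (coinvariants of a finite group have the same dimension as invariants).

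However, your diagnosis of the ``main obstacle'' contains a genuine error: finiteness of the component group of $\mathrm{Iso}(\mathbb{R}^3/\Gamma)$ cannot come from finiteness of $\mathrm{Out}(\Gamma)$, because $\mathrm{Out}(\Gamma)$ is \emph{not} finite for Bieberbach groups in general --- already for $\Gamma = \mathbb{Z}^3$ one has $\mathrm{Out}(\mathbb{Z}^3) \cong GL_3(\mathbb{Z})$, which is infinite, even though the flat torus has isometry group $(\mathbb{R}^3/\mathbb{Z}^3) \rtimes (O(3) \cap GL_3(\mathbb{Z}))$ with only finitely many components. The correct source of finiteness is metric, not group-theoretic: elements of $N_{E(3)}(\Gamma)$ have linear part in the \emph{compact} group $O(3)$, and the linear parts must normalize the holonomy and carry the lattice $T$ to itself, so they range over the finite group $O(3) \cap \mathrm{Aut}(T)$ (up to the finite holonomy); equivalently, the isometry group of a compact Riemannian orbifold is compact (Arzel\`a--Ascoli together with the Lie group structure, cf.\ Theorem \ref{theo:isometryalex}), and a compact Lie group has finitely many components. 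With that correction your self-contained argument goes through.
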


\subsection{Examples}

To understand why a compact quotient $\R^3 / \Gamma$ could have as isometry group a torus of smaller dimension than $3$, we can take a look at two examples in dimension two:
\begin{example}\label{remark:general_decreace_symmetries1}
The group $\Z^2$ is a discrete subgroup of $Iso(\R^2)$, such that the quotient $\R^2 / \Z^2$, has the torus $N_{\R^2}(\Z^2) / \Z^2 \cong \R^2 / \Z^2$ acting naturally by isometries, however the full isometry group $Iso(\R^2 / \Z^2) \cong (\R^2 / \Z^2) \rtimes D_4$ is bigger.
\end{example}

\begin{example}\label{remark:general_decreace_symmetries}
We may extend the previous example to the group $\Lambda = \Z^2 \rtimes D_4$, which is a discrete subgroup of $Iso(\R^2)$, such that it is not completely contained in $\R^2$ and produces a compact quotient $\R^2 / \Lambda$, homeomorphic to the $2$-sphere $S^2$. To compute the isometry group, we observe the contentions
	\[	N_{\R^2} (\Lambda) = \{(n/2, n/2 + m) : n,m \in \Z \} \subset N_{\R^2} (\Z^2) = \R^2,	\]
and $N_{Iso(\R^2)}(\Lambda ) = Aut(\Z^2) \rtimes N_{\R^2} (\Lambda) \cong D_4 \rtimes N_{\R^2} (\Lambda)$, which gives us
	\[	N_{Iso(\R^2)}(\Lambda ) / \Lambda \cong (D_4 \rtimes N_{\R^2} (\Lambda)	) / (D_4 \times \Z^2) \cong \Z_2.	\]
This gives us a finite isometry group $Iso(\R^2 / \Lambda) \cong \Z_2$. Observe that if $\sigma \in D_4$ and $v \in \Z^2$, then the commutator of these elements is $[\sigma,v] = \sigma(v) - v \in \Z^2$ and we can see that the commutator group $[\Gamma , \Gamma]$ contains a lattice subgroup of $\R^2$ which implies that $\Gamma / [\Gamma, \Gamma]$ is finite, verifying Theorem \ref{orbifold:euclidean}.
\end{example}

\section{Nil geometry}

\subsection{Riemannian geometry of the Heisenberg group}

If $\mathbb{F}$ is a commutative ring, denote by $H_\mathbb{F}$ the group of $3 \times 3$ upper triangular matrices over $\mathbb{F}$ with $1$ in the diagonal, that is
	\[	H_\mathbb{F} = \left\{\left(\begin{array}{ccc} 1 & x & z \\ 0 & 1 & y \\ 0 & 0 & 1 \end{array}\right) : x,y,z \in \mathbb{F}	\right\}.	\]
The group $H_\R$ is a Lie group called the three dimensional Heisenberg group that fits  into the exact sequence
	\[	1 \rightarrow \R \rightarrow H_\R \rightarrow \R^2 \rightarrow 1,	\]
where $\R \subset H_\R$ is its center. The three matrices
	\[	e_1 = \left(\begin{array}{ccc} 0 & 1 & 0 \\ 0 & 0 & 0 \\ 0 & 0 & 0 \end{array}\right), \quad
		e_2 = \left(\begin{array}{ccc} 0 & 0 & 0 \\ 0 & 0 & 1 \\ 0 & 0 & 0 \end{array}\right), \quad
		e_3 = \left(\begin{array}{ccc} 0 & 0 & 1 \\ 0 & 0 & 0 \\ 0 & 0 & 0 \end{array}\right);
		\]
determine a canonical basis of the tangent space at the identity $T_I (H_\R)$, so that its translations by left-multiplications gives us a basis of left invariant vector fields denoted by $X_j$ with $X_j(I) = e_j$. For a fixed element  
	\[	g = \left(\begin{array}{ccc} 1 & x & z \\ 0 & 1 & y \\ 0 & 0 & 1 \end{array}\right) \in H_\R, 	\]
the vector fields at $T_g (H_\R)$ have expresions
	\[	X_1(g) = \left(\begin{array}{ccc} 0 & 1 & 0 \\ 0 & 0 & 0 \\ 0 & 0 & 0 \end{array}\right), \quad
		X_2(g) = \left(\begin{array}{ccc} 0 & 0 & x \\ 0 & 0 & 1 \\ 0 & 0 & 0 \end{array}\right), \quad
		X_3(g) = \left(\begin{array}{ccc} 0 & 0 & 1 \\ 0 & 0 & 0 \\ 0 & 0 & 0 \end{array}\right).
		\]
If we consider the global coordinates
	\[	\R^3 \rightarrow H_\R, \qquad (x,y,z) \mapsto \left(\begin{array}{ccc} 1 & x & z \\ 0 & 1 & y \\ 0 & 0 & 1 \end{array}\right),	\]
then a vector $v \in T_g (H_\R)$ decomposes as
	\[	v = v_1 e_1 + v_2 e_2 + v_3 e_3 = v_1 X_1(g) + v_2 X_2(g) + (v_3 - x v_2) X_3(g),	\]
so that the left-invariant metric in $H_\R$ having $X_j(g)$ as an orthonormal basis is given in this coordinates as $ds^2 = dx^2 + dy^2 + (dz - x dy)^2$. Being left-invariant, this metric has $H_\R$ as a subgroup of isometries given by left multiplication
	\[	L_g : H_\R \rightarrow H_\R, \qquad L_g(h) =gh,	\]
for every $g \in H_\R$.  Notice that there are other isometries that don't come from left multiplication of $H_\R$.  Such isometries form a group isometric to the orthogonal group $O(2)$ generated by the reflection $R(x,y,z) = (x,-y,-z)$ and the twisted rotations
	\[ 	m: S^1 \times H_\R \rightarrow H_\R, \qquad m_\theta(x,y,z) = (\rho_\theta(x,y), z + \eta_\theta(x,y)), \]
where $\rho_\theta$ is a rotation in the $(x,y)$-plane with angle $\theta$, $\eta_\theta$ is a polynomial function in $x$ and $y$ and trigonometric in $\theta$. 

The full isometry group $Iso(H_\R)$ can be described  as a semi-direct product $  H_\R \rtimes O(2)$, because
	\[	m_\theta \circ L_g \circ m_\theta^{-1} = L_{m_\theta(g)}, \qquad R \circ L_g \circ R = L_{R(g)},	\]
meaning  that   the exact exact sequence
	\[	1 \rightarrow \R \rightarrow Iso(H_\R) \rightarrow Iso(\R^2) \rightarrow 1,		\]
induced by the action on the quotient by the center $H_\R /\R \cong \R^2$ splits off, see \cite{Scott} for more details. 

\subsection{Examples}
In this section we describe a series of ilustrative examples that capture the behaviour of every discrete subgroup of $Iso(H_\R)$.

\begin{example}\label{ex:square_nil_lattice}
The group $H_\Z \subset H_\R$ is a discrete subgroup so that the exact sequence determining $H_\R$ induces the fiber-bundle structure
	\[	\R /\Z \rightarrow H_\R / H_\Z \rightarrow \R^2 / \Z^2	\]
and thus $H_\Z$ is a lattice subgroup of $H_\R$ such that $H_\R / H_\Z$ is a compact Riemannian manifold. As the conjugation can be computed as
	\[	g = (x,y,z), \qquad g \ (n,m,p) \ g^{-1} = (n, m, p + xm - yn),	\]
the normalizer in $H_\R$ is $N_{H_\R}(H_\Z) = \left\{	\left( n ,m, p \right) : n,m \in \Z, \ p \in \R  \right\}$. This gives us the isometries in the quotient
	\[	S^1 \cong N_{H_\R}(H_\Z)/H_\Z \hookrightarrow Iso(H_\R / H_\Z). 	\]
 
We consider now  the  normalizer of  the  Heisenberg  group  in $Iso(H_\R)$.  This can be  determined as
	\[	N_{Iso(H_\R)}(H_\Z) = \left\{	\left( n ,m, p \right) : n,m \in \Z, \ p \in \R  \right\} \rtimes D_4,	\]
where the Dihedral group $D_4$ is generated by the isometries
	\[	m_{\pi/2}(n,m,p) = (-m,n,p -nm), \qquad R(n,m,p) = (n,-m,-p),	\]
so that what we get is $Iso(H_\R/H_\Z) \cong S^1 \rtimes D_4$. 

We can modify this example by adding the dihedral group to the lattice, so that we have the fiber bundle structure
	\[	\R / \Z \rightarrow H_\R / (H_\Z \rtimes D_4) \rightarrow \R^2 / (\Z^2 \rtimes D_4) \cong S^2. 	\]
The discussion on Example \ref{remark:general_decreace_symmetries} explains the  last quotient in the sequence). Notice  that  we decreased the normalizer 
	\[	N_{Iso(H_\R)}(H_\Z \rtimes D_4) = \{(n,m,l) : n,m,2l \in \Z \} \rtimes D_4,	\]
so that $Iso(H_\R/(H_\Z \rtimes D_4)) \cong \Z_2$.
\end{example}

\begin{example}\label{ex:square_p_nil_lattice}
Fix a positive integer $p \in \mathbb{N}$ and consider the lattice 
	\[	G_p = \left\{	\left( n,m , \frac{l}{p} \right) : n,m,l \in \Z \right\} \subset H_\R,	\]
which has as normalizer group in $H_\R$ the group
		\[	N_{H_\R}(G_p) = \left\{	\left( \frac{n}{p}, \frac{m}{p} , r \right) : n,m \in \Z, \ r \in \R \right\},	\]
and normalizer group in $Iso(H_\R)$, the group $N_{H_\R}(G_p) \rtimes D_4$, with Dihedral group $D_4 = \langle m_{\pi/2}, R \rangle$ as before. The isometry group is characterized by the exact sequence
	\[	1 \rightarrow S^1 \rightarrow Iso(H_\R / G_p) \rightarrow D_4 \ltimes (\Z_p \times \Z_p) \rightarrow 1	\]
and we recover the previous example by taking $p = 1$.
\end{example}

\begin{example}\label{ex:hexagonal_nil_lattice}
Fix a positive integer $p \in \mathbb{N}$ and consider the lattice
	\[	L_p = \left\{	\left( \frac{n}{2} + m , \frac{\sqrt{3} n}{2} , \frac{\sqrt{3} l}{2 p} \right) : n,m,l \in \Z \right\} \subset H_\R,	\]
so that it has normalizer group in $H_\R$
	\[	N_{H_\R}(L_p) = \left\{	\left( \frac{n}{2p} + \frac{m}{p} , \frac{\sqrt{3} n}{2 p} , r \right): n,m \in \Z, \ r \in \R \right\}.	\]
As the group $L_p$ projects to a hexagonal lattice in $\R^2$, we should expect to have a Dihedral group $D_6$ normalizing $L_p$, however, the rotation $m_{\pi/3} : H_\R \rightarrow H_\R$ given by 
	\[	m_{\pi/3}(x,y,z) = \left(\frac{1}{2} \left( x - \sqrt{3}y \right), \frac{1}{2} \left( y + \sqrt{3}x \right), z + \frac{\sqrt{3}}{8}\left( y^2 - x^2 - 2 \sqrt{3} xy \right) \right),	\]
doesn't preserve $L_p$. To fix this, we must add a translation mixed with the rotation.Put $g = \left( \frac{1}{8}, -\frac{ \sqrt{3} }{8}, 0 \right) \in H_\R$, then $\varphi = m_{\pi/3} \circ L_g \in N_{Iso(H_\R)}(L_p)$, which can be verified using the relation $\varphi \circ L_h \circ \varphi^{-1} = L_{m_{\pi/3}(ghg^{-1})}$.	
We can describe the normalizer group of $L_p$ in $Iso(H_\R)$ in terms  of  generators as
	\[	N_{Iso(H_\R)}(L_p) = \langle L_g, \varphi, R : g \in N_{H_\R}(L_p) \rangle,	\]
where $R(x,y,z) = (x,-y,-z)$, and so, we have the isometry group
	\[	1 \rightarrow S^1 \rightarrow Iso(H_\R / L_p) \rightarrow (\Z_p \times \Z_p) \rtimes D_6  \rightarrow 1,	\]
where the dihedral group $D_6$ is generated by $\langle \varphi, R \rangle$. 
\end{example}

\begin{example}\label{ex:gen_nil_lattice}
All the previous examples can be generalized as follows: Fix $p \in \mathbb{N}$ and $u,v \in \R^2$ linearly independent, so that $\Gamma = \{ nu + mv : n,m \in \Z\} \subset \R^2$ is a lattice. If $(u,0) \times (v,0) = (0,0,\lambda) \in \R^3$, then the group
	\[	M_p = \left\{ \left( n u + mv , \frac{\lambda}{p} l \right) : n,m,l \in \Z \right\} \subset H_\R	\]
is a lattice having normalizer group in $H_\R$
	\[	N_{H_\R}(M_p) = \left\{	\left( \frac{n}{p} u + \frac{m}{p} v , r \right): n,m \in \Z, \ r \in \R \right\}.	\]
The lattice $\Gamma$ has an automorphism group $Aut(\Gamma) \in \{0, \Z_2, D_4, D_6\}$, which is, if non-trivial, generated by a rotation with angle $\theta$ and a reflection. The whole normalizer group is  given in terms of  generators as
	\[	N_{Iso(H_\R)}(M_p) = \langle L_g, \varphi, R : g \in N_{H_\R}(M_p) \rangle,	\]
where $\varphi = m_\theta \circ L_w$. Here, $w = (w_0,0) \in H_\R$ must be chosen so that if
	\[	w \times (u,0) = (0,r_1), \qquad w \times (v,0) = (0,r_2),	\]
then $m_\theta(u, r_1), m_\theta(v, r_2) \in M_p$. Thus, we have an isometry  group of the quotient given by the exact sequence
	\[	1 \rightarrow S^1 \rightarrow Iso(H_\R / M_p) \rightarrow (\Z_p \times \Z_p)  \rtimes Aut(\Gamma) \rightarrow 1.	\]
\end{example}

\begin{remark}
The previous examples give us the general strategy to compute the isometry group of a quotient $H_\R / G$, for $G \subset Iso(H_\R)$ a discrete group of isometries. This strategy is as follows: $G$ projects to a discrete subgroup $\Gamma \subset Iso(\R^2)$ which has a finite index subgroup $\Gamma_0 \subset \R^2$, corresponding to a finite index subgroup $G_0 = G \cap H_\R \subset G$ and a lattice in $H_\R$. The normalizer of $G_0$ projects again a lattice in $\R^2$ and thus $Iso(H_\R /G_0)$ is an extension of a finite group $\Z_p \times \Z_p$ by $S^1$. The isometry group $Iso(H_\R /G)$ is just the previous group with an extra finite group of isometries, coming from the automorphisms of the lattice $Aut(\Gamma)$. This strategy fails if the projection to $\R^2$ is non-discrete, a possibility shown in the following two examples, however, in the case where the quotient $H_\R / G$ has finite volume, we will see that this patological behaviour doesn't occur.
\end{remark}

Here we add two examples of discrete groups whose projected action onto $\R^2$ is non-discrete, these examples capture the general behaviour of discrete groups having this property as we will see in the next section.

\begin{example}
Consider $\varphi: \IN \rightarrow S^1$, a homomorphism with dense image and $g = (0,0,1) \in H_\R$ a generator of the center, so that the group
	\[	\{ (g^{n} , \varphi(n)) : n \in \IN \} \subset  H_\R \rtimes S^1 \cong Iso(H_\R)	\]
is a discrete subgroup of isometries of $H_\R$ with dense projection onto $S^1 \cong SO(2)$ and in particular, with a non-discrete action on $\R^2$. In this example, the projected group leaves fixed the point $p = \frac{1}{1 - \lambda} \in \IC \cong \IR^2$, where $\lambda = \varphi(1)$ and in particular, it is a group of rotations around such point.
\end{example}

\begin{example}
Given a scaling $0<\varepsilon<1$, consider the group generated by $(1,0,1), (\varepsilon, 0, 1) \in H_\R \subset Iso(H_\R)$ and $-1 \in \IS^1 \subset Iso(H_\R)$. This is a discrete subgroup of $Iso(H_\R)$, which projects to a non-discrete subgroup of $Iso(\IR^2)$ leaving fixed the line $\{(x,0) : x \in \IR \} \subset \IR^2$.
\end{example}

\begin{remark}
The most symmetric lattices in $\R^2$ are the square and hexagonal lattices, having linear symmetry groups $D_4$ and $D_6$. Theorem \ref{prop:nil-isometry-group} tells us that the generalizations of these lattices to $H_\R$, described in Example \ref{ex:square_p_nil_lattice} and Example \ref{ex:hexagonal_nil_lattice} are the most symmetric finite volume quotients $H_\R / G$, with isometry groups
	\[	1 \rightarrow S^1 \rightarrow Iso(H_\R/G) \rightarrow  (\Z_n \times \Z_n) \rtimes  D  \rightarrow 1,	\]
with $D$ equal to $D_4$ and $D_6$ respectively, and $n \in \mathbb{N}$.
\end{remark}

\subsection{Classification of discrete subgroups of isometries}

In this section $H_\R$ denotes the Heisenberg Lie group considered as a Riemannian manifold with respect to the left-invariant metric constructed in the previous section. Here, we describe the conditions on which a discrete group on $Iso(H_\R)$ induces a discrete action on the Euclidean plane $\R^2$.

\begin{proposition}\label{prop:nil-discrete-proj}
If $G$ is a discrete subgroup of isometries of $H_\R$, then the exact sequence
	\[	1 \rightarrow \R \rightarrow Iso(H_\R) \rightarrow Iso(\R^2) \rightarrow 1	\]
induces an exact sequence
	\[	1 \rightarrow K \rightarrow G \rightarrow \Gamma \rightarrow 1,	\]
where $\Gamma \subset Iso(\R^2)$ is either discrete or it is an abelian group leaving fixed either a point or a line. Moreover, 
	\begin{enumerate}
		\item if $\Gamma \subset Iso(\R^2)$ has a finite index lattice, then $K \subset \R$ is a non-trivial discrete subgroup and 

		\item if $\Gamma$ is non-discrete and leaves fixed a line, then there is a finite index subgroup of $G$ which is contained in $H_\R$.
	\end{enumerate}
\end{proposition}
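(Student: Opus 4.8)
The plan is to restrict the central extension to $G$ and read off the structure of $\Gamma$ from the commutator geometry of $H_\R$. First I would set $K = G\cap\R$; since $G$ is discrete and the kernel $\R$ of the projection $p\colon Iso(H_\R)\to Iso(\R^2)$ is closed, $K$ is a discrete subgroup of $\R$, hence $K\cong\{0\}$ or $K\cong\Z$. As $\R$ is normal in $Iso(H_\R)$, the subgroup $K$ is normal in $G$ and the restriction of $p$ gives the asserted sequence $1\to K\to G\to\Gamma\to1$ with $\Gamma=p(G)$. I would then introduce the pure-translation part $T=\Gamma\cap\R^2$ and the point group $P\subset O(2)$, the image of $\Gamma$ under the linear-part homomorphism $Iso(\R^2)\to O(2)$, so that $1\to T\to\Gamma\to P\to1$. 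Because $p$ is the identity on the $O(2)$-factor, an element of $G$ projecting into $\R^2$ has trivial rotational part, i.e. $G\cap H_\R$ surjects onto $T$ with kernel $K$.

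The engine of the proof is the commutator formula in $H_\R$: for $t,s\in T$ choose lifts $g_t,g_s\in G\cap H_\R$; since the central coordinate of a commutator in $H_\R$ equals $t\wedge s:=t_1 s_2 - t_2 s_1$ and is insensitive to the central ambiguity of the lifts, one gets $[g_t,g_s]=(0,0,t\wedge s)\in G\cap\R=K$. Thus the alternating form $\wedge$ maps $T\times T$ into the discrete group $K$. I would use this to show that $T$ is either discrete or contained in a line: if $\overline{T}=\R^2$ then for any fixed $t_0\neq 0$ the continuous functional $s\mapsto t_0\wedge s$ has dense image on the dense set $T$, contradicting discreteness of $K$; the same density argument excludes a closure of the form (line) plus (transverse lattice). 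Hence $\overline{T}$ is at most one dimensional, so $T$ is discrete or $T\subset\ell$ for a line $\ell$ with $\overline{T}=\ell$. This dichotomy, together with the bookkeeping of lifts, is the technical heart and the step I expect to be the main obstacle.

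From here the point group decides everything. If $T$ is discrete and $P$ is finite, then $\Gamma$ is a finite extension of a discrete group and is therefore discrete. If $T$ is discrete but $P$ is infinite, then $P$ would be an infinite subgroup of $O(2)$ preserving the nonzero lattice $T$, which is impossible since a nonzero discrete subgroup of $\R^2$ has finite symmetry group; hence $T=\{0\}$, and a short commutator computation (composing a rotation about a point $p$ with a $\Gamma$-conjugate produces a translation lying in $T=\{0\}$) forces every element of $\Gamma$ to fix $p$, so $\Gamma$ fixes a point and $\overline{\Gamma}^{0}=SO(2)_p$. Finally, if $T$ is non-discrete then $T\subset\ell$ is dense in a line; since $T$ is normal, $P$ preserves the direction of $\ell$ and is therefore finite, so $T$ has finite index in $\Gamma$ and $\Gamma$ leaves $\ell$ invariant. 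This yields the stated trichotomy (discrete, or fixing a point, or leaving a line invariant), with the non-discrete cases virtually abelian through the abelian normal subgroup $T$ or $\Gamma\cap SO(2)_p$.

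The two numbered refinements then fall out of the same mechanism. For (1), if $\Gamma$ contains a lattice of finite index then $T$ contains a rank-two lattice, so there exist independent $t,s\in T$ with $t\wedge s\neq 0$, giving $(0,0,t\wedge s)\in K$ and hence $K\cong\Z$. For (2), in the line case just established the finite-index subgroup $T\subset\R^2$ of pure translations lifts into $G\cap H_\R$, and since $[\Gamma:T]<\infty$ and $K\subset G\cap H_\R$, the subgroup $G\cap H_\R$ has finite index in $G$ and is contained in $H_\R$, as required.
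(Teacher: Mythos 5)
Your proposal is correct in substance but follows a genuinely different route from the paper's. Both arguments turn on the same engine: the Heisenberg commutator formula, by which lifts of two translations $t,s$ have commutator $(0,0,t\wedge s)\in G\cap\R=K$, so the area form maps $T\times T$ into the discrete group $K$. The surrounding architecture, however, differs. The paper splits on whether $K$ is trivial: if $K\neq 0$ it passes to $Iso(H_\R)/K$, whose kernel over $Iso(\R^2)$ is the compact group $S^1$, and concludes at once that $\Gamma$ is discrete; if $K=0$ it realizes $Iso^+(\R^2)$ as complex matrices and runs explicit commutator and conjugation computations to show $\Gamma$ fixes a point or a line. You instead decompose $\Gamma$ by its translation subgroup $T=\Gamma\cap\R^2$ and point group $P\subset O(2)$, use the classification of closed subgroups of $\R^2$ together with the density argument to show $\overline{T}$ is at most one-dimensional, and let the crystallographic restriction (a nonzero discrete subgroup of $\R^2$ has finite symmetry group in $O(2)$) drive the case analysis. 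Your route avoids both the compact-kernel step and the matrix computations, and it makes assertions (i) and (ii) fall out as immediate corollaries of the structure of $T$; the paper's route buys the sharper intermediate statement that $K\neq 0$ already forces $\Gamma$ discrete, which your argument never needs and never establishes.

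One step does need completing. In the case $\overline{T}=\ell$ a line, you deduce from normality of $T$ that $P$ preserves the direction of $\ell$ and then assert that ``$\Gamma$ leaves $\ell$ invariant.'' Preserving a direction only gives that $\Gamma$ permutes the family of lines parallel to $\ell$, not that it fixes one of them: a priori $\Gamma$ could contain point reflections about centers at different heights above $\ell$. The fix uses your own mechanism once more. Writing $c$ for the coordinate along $\ell^\perp$, any $\gamma\in\Gamma$ whose linear part is $-I$ or the reflection across $\ell$ acts on heights by $c\mapsto -c+a_\gamma$, while any $\gamma$ with linear part $I$ or the reflection across $\ell^\perp$ acts by $c\mapsto c+b_\gamma$ with $b_\gamma=0$, because $\gamma$ itself (respectively $\gamma^2$) is then a translation lying in $T\subset\ell$ and so has no $\ell^\perp$-component. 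The product of two flips is a non-flip, hence acts trivially on heights, forcing $a_\gamma=a_{\gamma'}$; thus all flips fix a common parallel line and $\Gamma$ preserves it (possibly a translate of $\ell$, which is all that is claimed). Finally, note that neither your argument nor the paper's actually delivers the word ``abelian'' in the statement: in this line case $\Gamma$ may contain $-I$ and is then non-abelian, as the paper's own example of the discrete group generated by $(1,0,1)$, $(\varepsilon,0,1)$ and $-1\in S^1$ shows; the essential, correct content is the invariant point or line, which is what Lemma \ref{lema:nil-infinite-volume} uses downstream.
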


\begin{proof}
Observe first that $K = G \cap \R$ is a discrete subgroup of isometries of $\R$ and so, if non-trivial, there is an isomorphism $\R / K \cong S^1$. The exact sequence
	\[	1 \rightarrow S^1 \rightarrow H_\R/K \rightarrow \R^2 \rightarrow 1	\]
gives us
	\[	1 \rightarrow S^1 \rightarrow Iso(H_\R)/K \rightarrow Iso(\R^2) \rightarrow 1,	\]
which has compact Kernel and thus, any discrete group in $Iso(H_\R)/K$ projects to a discrete group in $Iso(\R^2)$. This argument tells us that if $K$ is non-trivial then $\Gamma$ is discrete in $Iso(\R^2)$, because it is the projection of $G/K$ with compact kernel, and $G/K$ is always discrete in $Iso(H_\R)/K$. Suppose from now on that $K$ is trivial. If we identify $\R^2 \cong \C$ as a Euclidean space, then we can realize the group of orientation preserving isometries of the plane $\R^2$ as the matrix group
	\[	Iso^+(\R^2) \cong \left\{	\left(\begin{array}{cc} \lambda & z \\ 0 & 1 \end{array}\right)	: \lambda, z \in \C, \ |\lambda| = 1 \right\}		\]
with action
	\[	\left(\begin{array}{cc} \lambda & z \\ 0 & 1 \end{array}\right) \left(\begin{array}{c} w \\ 1 \end{array}\right) = \left(\begin{array}{c} \lambda w + z \\ 1 \end{array}\right).	\]
Observe that the restriction $Iso^+(\R^2) \subset Iso(\R^2)$ reduces the discusion to a subgroup of index 2, which doesn't alter the property of discreteness. We recall two important properties on commutators. First, commutators of two isometries give elements of pure translation part
	\[	\left[ \left(\begin{array}{cc} \lambda & z \\ 0 & 1 \end{array}\right) , \left(\begin{array}{cc} \beta & w \\ 0 & 1 \end{array}\right) \right] = \left(\begin{array}{cc} 1 & (z-w) + (\lambda w - \beta z) \\ 0 & 1 \end{array}\right),	\]
which tells us that $[G,G]$ projects to a subgroup of $Iso(\R^2)$ with only translation part, and so $[G,G] \subset H_\R$. Second, the commutator in $H_\R$ satisfies the relation
	\[	\left[	\left(\begin{array}{ccc} 1 & x & r \\ 0 & 1 & y \\ 0 & 0 & 1 \end{array}\right) , \left(\begin{array}{ccc} 1 & u & s \\ 0 & 1 & v \\ 0 & 0 & 1 \end{array}\right) \right] = \left(\begin{array}{ccc} 1 & 0 & x v - uy \\ 0 & 1 & 0 \\ 0 & 0 & 1 \end{array}\right),	\]
which has the geometric interpretation: if two elements of $H_\R$ project to the vectors $(x,y)$ and $(u,v)$, then its commutator is an element of the center $\R = Z(H_\R)$ whose magnitud is the area of the projected vectors. As we are under the supposition that $G \cap \R$ is trivial, the two previous relations on commutators tells us that $[G,G]$ is a commutative group and the corresponding projected group satisfies
	\[	[\Gamma,\Gamma]	\subset \left\{ \left(\begin{array}{cc} 1 & r z_0 \\ 0 & 1 \end{array}\right) : r \in \R \right\}	\]
for some $z_0 \in \C$. Suppose first that $\Gamma$ is non-commutative. The commutation relation
	\[	\left[\left(\begin{array}{cc} \lambda & z \\ 0 & 1 \end{array}\right), \left(\begin{array}{cc} 1 & z_0 \\ 0 & 1 \end{array}\right) \right] = \left(\begin{array}{cc} 1 & (\lambda-1)z_0 \\ 0 & 1 \end{array}\right),	\]
and the hypothesis that all the translation elements of $[\Gamma, \Gamma]$ are linearly dependent give us the condition $r = (1-\lambda)$ for some $r \in \R$ and as $|\lambda| = 1$, the only options are $\lambda = \pm 1$. As $\Gamma$ is non-commutative, there is at least one element that is not a translation, that is
	\[	\left(\begin{array}{cc} -1 & z \\ 0 & 1 \end{array}\right) \in \Gamma,	\]
and without loss of generality, we can change $\Gamma$ by $h \Gamma h^{-1}$ (where $h$ is the translation by $1/2 z$) so that in fact
	\[	\left(\begin{array}{cc} -1 & 0 \\ 0 & 1 \end{array}\right) \in \Gamma,	\]
this conjugation leaves $[\Gamma, \Gamma]$ invariant. Observe also that
	\[	\left[\left(\begin{array}{cc} \beta & w \\ 0 & 1 \end{array}\right), \left(\begin{array}{cc} -1 & 0 \\ 0 & 1 \end{array}\right) \right] = \left(\begin{array}{cc} 1 & 2 w \\ 0 & 1 \end{array}\right) \subset [\Gamma, \Gamma],	\]
implies by the same argument that $\beta = \pm 1$ and $w = s z_0$ for some $s \in \R$, and thus $\Gamma$ preserves the line generated by $z_0$. If on the other hand $\Gamma$ is commutative and contains an element of the form
		 \[	\left(\begin{array}{cc} \lambda & z \\ 0 & 1 \end{array}\right), \qquad \lambda \neq 1,	\]
this element has as a unique fixed point $\frac{-z}{\lambda -1}$. As $\Gamma$ is a commutative group, every element of $\Gamma$ must fix $\frac{-z}{\lambda - 1}$, and thus, it consists of rotations around this point. If no such element exists, $\Gamma$ consists of elements with purely translation part, which tells us that $G \subset H_\R$. We observe that in this last case, two elements $a,b \in G$ which project to two linearly independent vectors in $\Gamma$ must satisfy that $e \neq [a,b] \in G \cap K$, which can't happen by hypothesis, so $\Gamma$ is a subgroup of the group $\{r \omega : r \in \R \}$ for some $\omega \in \C$ and thus $\Gamma$ preserves the line generated by $\omega$.
\end{proof}

\begin{lemma}\label{lema:nil-infinite-volume}
Let $G$ be a discrete subgroup of isometries of $H_\R$ together with the projection to the isometry group of $\R^2$
	\[	G \rightarrow \Gamma \subset Iso(\R^2).	\]
If $\Gamma$ preserves either a line or a point in $\R^2$, then the orbifold $H_\R/G$ has infinite volume.
\end{lemma}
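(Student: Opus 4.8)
The plan is to compute the volume of $H_\R/G$ by slicing along the direction transverse to the fixed set of $\Gamma$, and to show that this transverse direction is never compactified in the quotient. Since the left-invariant metric on $H_\R$ has Riemannian volume equal to Haar, i.e. to Lebesgue measure $dx\,dy\,dz$, and isometries preserve it, we have $\mathrm{vol}(H_\R/G)=\mu(F)$ for a fundamental domain $F$. Let $t:\R^2\to[0,\infty)$ be the distance to the fixed set of $\Gamma$: the Euclidean radius about the fixed point in the point case, and the distance to the preserved line in the line case. In either case $t$ is $\Gamma$-invariant, because the elements of $\Gamma$ act on the normal direction to the fixed set only through rotations or through $\pm 1$, carrying no translation component there. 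Hence $T=t\circ\pi$ is a proper $G$-invariant function on $H_\R$, and it descends to $H_\R/G$. Throughout I work with the exact sequence $1\to K\to G\to\Gamma\to 1$ of Proposition \ref{prop:nil-discrete-proj}, where $K=G\cap\R$.

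First I would dispose of the case $K\neq\{e\}$. As recorded in the proof of Proposition \ref{prop:nil-discrete-proj}, a non-trivial $K$ forces $\Gamma$ to be discrete, and the center is compactified to a circle $\R/K$ of some finite length $\ell>0$; a Fubini argument over the central fibration then gives $\mathrm{vol}(H_\R/G)=\ell\cdot\mathrm{area}(\R^2/\Gamma)$. A discrete subgroup of $Iso(\R^2)$ that preserves a line or a point is a frieze or a point group, never cocompact, so its fundamental domain has infinite area, the coordinate $t$ ranging over all of $[0,\infty)$. Therefore $\mathrm{vol}(H_\R/G)=\infty$ in this case.

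It remains to treat $K=\{e\}$, where $G\to\Gamma$ is an isomorphism and the center is not compactified. Here I would integrate against the area element adapted to $T$: polar coordinates $r\,dr\,d\theta\,dz$ in the point case, and the analogous $dx\,ds\,dz$ in the line case with $s$ the transverse coordinate. Slicing $H_\R/G$ over the value $s=T$, the volume becomes $\int_0^\infty r\,\beta(r)\,dr$ (point case) or $\int_0^\infty \beta(s)\,ds$ (line case), where $\beta(s)$ is the area of the quotient by $G$ of the level set $T^{-1}(s)$, a flat cylinder $S^1\times\R$ or a flat plane carrying the induced discrete $G$-action. Provided $\beta(s)$ stays bounded below by a positive constant, the growth of the Jacobian (the factor $r$, respectively the infinite length of the $s$-range) forces the integral to diverge, giving infinite volume.

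The main obstacle is exactly the lower bound $\beta(s)\ge\beta_0>0$, and the hard instance is the dense-projection, or screw-motion, case, in which $\Gamma$ is a non-discrete group of rotations about a point (as in the examples following Proposition \ref{prop:nil-discrete-proj}). There the base $\R^2/\Gamma$ collapses angularly, but discreteness of $G$ in $Iso(H_\R)$ prevents the level quotients from degenerating: an accumulation of the $G$-action on a single cylinder would produce an accumulation at the identity in $Iso(H_\R)$. Concretely, I would show that the vertical period realized on each level cylinder is uniformly bounded below, so that $\beta(s)$ does not decay. Conceptually this reflects the commutator--area relation used in Proposition \ref{prop:nil-discrete-proj}: to compactify the central direction one needs a non-trivial element of $K$, which arises only as a commutator $[a,b]$ of elements projecting to two linearly independent translations of $\R^2$; a group preserving a line or a point admits no two independent translations, so it can never compactify the central and the transverse directions simultaneously, and the volume must be infinite.
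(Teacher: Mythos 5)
Your slicing strategy is genuinely different from the paper's proof (which takes a small box $D$ inside a Dirichlet domain, enlarges it in the direction left free by the hypothesis on $\Gamma$, and writes the enlargement as a countable disjoint union of isometric translates of $D$), and your treatment of the case $K\neq\{e\}$ is correct and complete. The gap is in the case $K=\{e\}$: everything there rests on the lower bound $\beta(s)\geq\beta_0>0$, which you defer (``I would show\dots''), and the mechanism you propose for it is false. Consider
\[
G=\big\langle\, (1,0,0),\ (\sqrt2,0,1)\,\big\rangle\subset H_\R,\qquad (a,b,c)\cdot(x,y,z)=(a+x,\ b+y,\ c+z+ay).
\]
This $G$ is discrete and abelian, $K=G\cap\R=\{e\}$, and its projection $\Gamma=(\Z+\sqrt2\,\Z)\cdot(1,0)$ is a \emph{non-discrete} group of translations preserving the $x$-axis, so it is squarely within the hypothesis of the Lemma. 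The element $g_{n,m}=(n+m\sqrt2,0,m)$ acts on the flat slice $\{y=y_0\}$ as the Euclidean translation $(x,z)\mapsto(x+\delta,\ z+m+\delta y_0)$, where $\delta=n+m\sqrt2$. Choosing $n,m$ with $\delta$ arbitrarily small and taking $y_0=-m/\delta$, you obtain a non-identity element of $G$ that moves every point of that slice by only $|\delta|$ and whose vertical displacement on that slice is exactly $0$, while $g_{n,m}$ is enormous in $Iso(H_\R)$ (its central part is $m$). Hence: (i) near-triviality of the action on a single level set does \emph{not} force accumulation at the identity in $Iso(H_\R)$; (ii) there is no uniform lower bound on the ``vertical period'' over the slices; and (iii) your closing heuristic also fails, since here the central direction \emph{is} compactified slice by slice (each slice quotient is a compact torus) even though $K$ is trivial and $\Gamma$ contains no two independent translations.

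What actually saves $\beta$ in this case is a different mechanism, absent from your proposal: after the reduction $G\subset H_\R$ provided by Proposition \ref{prop:nil-discrete-proj}, a line-preserving $G$ lies in the abelian plane subgroup $P=\{(tv,z)\}$, and the lattice it induces on the slice at transverse coordinate $s$ is the image of the fixed lattice $G\subset P\cong\R^2$ under the unimodular shear $(t,c)\mapsto(t,\,c+\kappa t s)$; covolume is shear-invariant, so $\beta(s)$ is \emph{constant} (or identically infinite when $\mathrm{rank}(G)\leq 1$) even though minimal displacements degenerate along the slices. In the rotation case your mechanism can be repaired, because there every element of $G$ has the form $((0,0,c),\sigma)\in Z(H_\R)\rtimes O(2)$ (the projection fixes the point, so the $H_\R$-part is central); compactness of $O(2)$ then makes the projection of $G$ to the center discrete, and the vertical period really is positive and slice-independent, giving $\beta(r)\geq c\,r$. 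So the coarea skeleton can be completed, but as written the crucial estimate is unproven in one case and supported by a false claim in the other; that is a genuine gap.
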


\begin{proof}
Suppose first that $\Gamma$ preserves the line $\R v$, then as a consequence of either Bieberbach's Theorem if $\Gamma$ is discrete, or as a consequence of the proof of Proposition \ref{prop:nil-discrete-proj} if $\Gamma$ is non-discrete, $G$ has a finite index subgroup that is contained in $H_\R$. Passing to a finite index subgroup doesn't change the property of having finite co-volume so without loss of generality we may suppose that $G \subset H_\R$. There is a fundamental domain that has non-empty interior, given for example by the Dirichlet's fundamental domain $\{q \in H_\R : d(q_0,q) < d(q_0,\gamma(q) ), \ \gamma \in G \setminus \{e\} \}$, with respect to the Riemannian distance $d$, see \cite{Rat}. In particular there is a subset of the form
	\[	D = \{ (t v + s v^\perp , \lambda r_0 ) \subset \C \times \R : (s,t,\lambda) \in (-\varepsilon,\varepsilon)^3 + (s_0,t_0,\lambda_0) \} \subset H_\R	\]
such that no two elements of $D$ can be identified with an element of $G$. As $\Gamma$ preserves the line $\R v$, then we can see that no two elements of $\widetilde{D}$ can be identified with an element of $G$, where
	\[	\widetilde{D} = \{ (t v + s v^\perp , \lambda r_0 ) \subset \C \times \R : (s,t,\lambda) \in \R \times (-\varepsilon,\varepsilon)^2 + (0,t_0,\lambda_0) \}	\]
but $\widetilde{D} = \bigcup_j D_j$, where
	\[	D_j = \{ (t v + s v^\perp , \lambda r_0 ) \subset \C \times \R : (s,t,\lambda) \in (-\varepsilon,\varepsilon)^3 + (s_j,t_0,\lambda_0) \} \subset H_\R	\]
and every $D_j$ can be obtained by translating $D$ with an element of $H_\R$, thus
	\[	Vol(H_\R/G) \geq Vol(\widetilde{D}) = \sum_j Vol(D_j) = \sum_j Vol(D) = \infty.	\]
The second possibility is when $\Gamma$ is a commutative group preserving a point, that is, $\Gamma$ is conjugated to a subgroup of $SO(2)$. Again there is a fundamental domain of $G$ with non-empty interior and in particular, there is a subset
	\[	\Omega = \{ (r e^{i \theta}, sr_0) \subset \C \times \R : (r,\theta, s) \in (-\varepsilon, \varepsilon)^3 + (a,b,c) \} \subset H_\R,	\]
such that no two elements of $\Omega$ can be identified with an element of $G$. As $\Gamma$ acts only as rotations in the $\C$ plane, we can enlarge as before $\Omega$ to the subset
	\[	\widetilde{\Omega} = \{ (r e^{i \theta}, sr_0) \subset \C \times \R : (r,\theta, s) \in \R_{> 0} \times (-\varepsilon, \varepsilon)^2 + (0,b,c) \},	\]
so that no two elements of $\widetilde{\Omega}$ can be identified with an element of $G$. As before, we have a countable union of disjoints sets contained in $\widetilde{\Omega}$ that are translated copies of $\Omega$, that is 
	\[	\bigcup_i \Omega + (\omega_j,0)  \subset \widetilde{\Omega}	\]
and $Vol(H_\R/G) \geq Vol(\widetilde{\Omega}) \geq \sum_j  Vol(\Omega + (\omega_j,0))	= \sum_j Vol(\Omega) = \infty$.
\end{proof}

\begin{lemma}\label{lema:carac_latt}
If $u,v \in \R^2$ are two linearly independent vectors, with $(u,0) \times (v,0) = (0,0,\lambda) \in \R^3$ and $n \in \mathbb{N}$, $r,s \in \R$, then the group
	\[	G = \left\langle \left(u , r \right), \left(v,s \right), \left(0,0,\frac{\lambda}{n} \right) \right\rangle \subset H_\R	\]
is a lattice in $H_\R$. Conversely, every lattice in $H_\R$ can be obtained like this. 
\end{lemma}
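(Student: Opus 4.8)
The plan is to analyse $G$ through the central extension
\[	1 \to \R \to H_\R \to \R^2 \to 1,	\]
where the projection $\pi : H_\R \to \R^2$, $(x,y,z) \mapsto (x,y)$, is the quotient by the center $\R = Z(H_\R)$. Writing $a = (u,r)$, $b = (v,s)$ and $c = (0,0,\lambda/n)$, the element $c$ is central, and the commutator identity recalled in the proof of Proposition \ref{prop:nil-discrete-proj} gives $[a,b] = (0,0,\lambda) = c^{n}$, since $(u,0)\times(v,0) = (0,0,\lambda)$. Thus $G$ satisfies the defining relations of a discrete Heisenberg group, and the whole argument reduces to bookkeeping on the induced exact sequence $1 \to G\cap\R \to G \to \pi(G) \to 1$.

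For the forward implication I would first put every element of $G$ in the normal form $a^{i}b^{j}c^{k}$ with $i,j,k\in\Z$: since $c$ is central and $ba = abc^{-n}$, any word in the generators reduces to this shape. Evaluating in $H_\R$, the first two coordinates of $a^{i}b^{j}c^{k}$ equal $iu+jv$, so such an element is central precisely when $iu+jv=0$; linear independence of $u,v$ forces $i=j=0$, giving $G\cap\R = \langle c\rangle \cong \Z$, a nontrivial discrete subgroup of $\R$. Meanwhile $\pi(G) = \Z u + \Z v$ is a rank-two lattice in $\R^2$. Discreteness of $G$ follows immediately: if $g\in G$ lies near the identity then $\pi(g)$ lies near $0$, hence $\pi(g)=0$ and $g\in G\cap\R$ lies near $e$, so $g=e$. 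Finally, the extension realizes $H_\R/G$ as the total space of a fiber bundle with compact fiber $S^1 = \R/(G\cap\R)$ over the compact base $\R^2/\pi(G)$, so $H_\R/G$ is compact and $G$ is a lattice.

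For the converse, let $G\subset H_\R$ be an arbitrary lattice and set $\Gamma = \pi(G)$, a subgroup of the translation group $\R^2 \subset Iso(\R^2)$. By Proposition \ref{prop:nil-discrete-proj}, $\Gamma$ is either discrete or an abelian group fixing a point or a line; but Lemma \ref{lema:nil-infinite-volume} shows the latter options force $H_\R/G$ to have infinite volume, which is excluded. Hence $\Gamma$ is discrete, and a discrete subgroup of $\R^2$ of rank $\le 1$ would again preserve a line (or be trivial), so Lemma \ref{lema:nil-infinite-volume} forces $\Gamma$ to have rank two, say $\Gamma = \Z u + \Z v$ with $u,v$ linearly independent. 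Part (1) of Proposition \ref{prop:nil-discrete-proj} then gives that $K := G\cap\R = \langle(0,0,\mu)\rangle$ is nontrivial discrete. Choosing $a,b\in G$ with $\pi(a)=u$, $\pi(b)=v$, we may write $a=(u,r)$, $b=(v,s)$, and the commutator formula gives $[a,b]=(0,0,\lambda)\in K$; writing $\lambda = n\mu$ with $n\in\mathbb{N}$ (after orienting $u,v$ so that $n>0$), the generator of $K$ is $c=(0,0,\lambda/n)$. I then claim $G = \langle a,b,c\rangle$: setting $G'=\langle a,b,c\rangle$, we have $\pi(G')=\Z u+\Z v=\pi(G)$ and $G'\cap\R\supseteq\langle c\rangle = K = G\cap\R$, so for any $g\in G$ there is $g'\in G'$ with $\pi(g')=\pi(g)$, whence $g(g')^{-1}\in K\subset G'$ and $g\in G'$. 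Thus $G$ has exactly the asserted form.

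The routine parts (the normal form and the covolume fibration) are straightforward; the crux of the argument is the converse, and specifically the step proving that $\pi(G)$ is a genuine rank-two lattice rather than a rank-one or non-discrete subgroup of $\R^2$. This is precisely where the finite-volume hypothesis must enter, and the hard part will be that it cannot be avoided by purely algebraic means: I would handle it entirely through the already-established dichotomy of Proposition \ref{prop:nil-discrete-proj} together with the volume estimate of Lemma \ref{lema:nil-infinite-volume}, rather than re-deriving either one.
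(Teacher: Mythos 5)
Your proof is correct and follows essentially the same route as the paper's: discreteness plus the circle-bundle fibration over $\R^2/\Gamma$ for the forward direction, and the dichotomy of Proposition \ref{prop:nil-discrete-proj} combined with the volume estimate of Lemma \ref{lema:nil-infinite-volume} and the commutator identity for the converse. The only differences are cosmetic: you establish discreteness via the normal form $a^ib^jc^k$ rather than the paper's level-set argument, and you spell out the final generation step $G = \langle a,b,c\rangle$, which the paper leaves implicit.
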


\begin{proof}
Observe that the center of $G$ is the subgroup $K = \left \{ \frac{\lambda p}{n} : p \in \Z \right\}$ and if $(x,y,z),(x,y,z') \in G$, then
	\[	(x,y,z)^{-1} \cdot (x,y,z') = (0,0,z'-z) \in K,	\]
so that for $k,l \in \mathbb{N}$ fixed, and 
	\[	(u,r)^k \cdot (v, s)^l =(ku + lv , r_{k,l}) 	\]
the level set
	\[	\{(w, z) \in G: w = ku +lv \} = \left\{\left(k u + l v, r_{n,m} + \frac{\lambda p}{n}\right) : p \in \Z \right\}	\]
is discrete and thus, $G$ is a discrete subgroup of $H_\R$. If $\Gamma = \left\{ k u + l v : k,l \in \Z \right\}$ denotes the projection of $G$ onto $\R^2$, then there is an exact sequence
	\[	1 \rightarrow K \rightarrow G \rightarrow \Gamma \rightarrow 1	\]
which induces the fiber bundle structure
	\[	S^1 \cong \R /K	\rightarrow H_\R / G \rightarrow \R^2 / \Gamma \cong S^1 \times S^1,	\]
	which tells us that $H_\R/G$ is compact and thus, $G$ is a lattice in $H_\R$. Suppose now that $L \subset H_\R$ is a lattice, then by Lemma \ref{lema:nil-infinite-volume}, $L$ projects to a lattice subgroup of $\R^2$, generated by two linearly independent vectors $u', v' \in \R^2$ such that $(u',0) \times (v',0) = (0,0,\lambda')$, with $0 \neq \lambda' \in \R$ and observe that if $g = (u',r'), h = (v',s') \in L$, then their commutator is $[g,h] = (0,0,\lambda')$. As the intersection $K' = G \cap Z(H_\R)$ is discrete and contains the non-trivial element $(0,0,\lambda') \in K'$, then there is an integer $n' \in \mathbb{N}$ such that $K' =  \left \{ \frac{\lambda' p}{n'} : p \in \Z \right\}$ and thus, the lattice $L$ is generated by the set $\left\{ (u',r'), (v',s'), \left(0,0,\frac{\lambda' }{n'} \right) \right\}$.
\end{proof}
{\bf Connected components }
\begin{theorem}\label{prop:nil-isometry-group}
If $G \subset Iso(H_\R)$ is a discrete subgroup such that $H_\R/G$ has finite volume, then there is an exact sequence
	\[	1 \rightarrow C \rightarrow Iso(H_\R/G) \rightarrow  F  \rightarrow 1,	\]
where $F$ is a finite group, and $C \subset S^1$ is a closed subgroup. In particular, either $Iso(H_\R/G)$ is finite, or it is a finite extension of $S^1$.
\end{theorem}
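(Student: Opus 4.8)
The plan is to realize $Iso(H_\R/G)$ as a normalizer quotient and then pin down its identity component and its component group separately.

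First I would reduce to the case where the projected group is genuinely crystallographic. Since $H_\R/G$ has finite volume, Lemma \ref{lema:nil-infinite-volume} forbids the image $\Gamma\subset Iso(\R^2)$ of $G$ from fixing a line or a point; hence by the dichotomy of Proposition \ref{prop:nil-discrete-proj} the group $\Gamma$ must be discrete, and because the circle fibre is compact the base $\R^2/\Gamma$ has finite area, so $\Gamma$ is a two--dimensional crystallographic group. By Bieberbach it contains a finite--index rank--two translation lattice $\Gamma_0$, and then part (1) of Proposition \ref{prop:nil-discrete-proj} forces $K=G\cap\R$ to be a nontrivial discrete subgroup of the centre, so $\R/K\cong S^1$; moreover $G_0=G\cap H_\R$ is a lattice in $H_\R$ by Lemma \ref{lema:carac_latt} and has finite index in $G$. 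In particular $H_\R/G$ is a finite quotient of the compact nilmanifold $H_\R/G_0$, hence compact.

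Next I would use $Iso(H_\R/G)\cong N/G$ with $N=N_{Iso(H_\R)}(G)$, together with Proposition \ref{prop:centralizer-normalizer}, which identifies the identity component of $N/G$ with $C:=\pi(Z_0)$, where $Z_0$ is the identity component of the centralizer $Z=Z_{Iso(H_\R)}(G)$. The key step is to show $Z_0\subseteq\R$. Writing an element of $Iso(H_\R)=H_\R\rtimes O(2)$, a one--parameter subgroup $g_t\in Z$ conjugates an element of $G_0$ projecting to a translation vector $v\in\R^2$ to one projecting to $\rho_{\theta_t}(v)$; since $\Gamma_0$ supplies two independent such $v$, the rotational part $\rho_{\theta_t}$ must fix two independent vectors and hence be trivial, so $g_t\in H_\R$. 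Within $H_\R$ the commutator identity $[g,(n,m,p)]=(0,0,\,xm-yn)$ (with $g=(x,y,z)$) shows that commuting with the rank--two lattice $G_0$ forces $x=y=0$, i.e. $g_t$ lies in the centre $\R$. As $Z_0$ is a connected subgroup of $\R$ it is either trivial or all of $\R$, and since $Z_0\cap G=Z_0\cap K$ we get $C\cong Z_0/(Z_0\cap K)$, which is either trivial or $\R/K\cong S^1$; in both cases $C$ is a closed subgroup of $S^1$. Which alternative occurs is governed by orientation: the twisted rotations $m_\theta$ fix the centre pointwise while the reflection $R$ inverts it, so $\R\subseteq Z$ precisely when $G$ preserves orientation.

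For the component group $F:=(N/G)/C\cong\pi_0\big(Iso(H_\R/G)\big)$ I would simply invoke compactness: the isometry group of the compact Riemannian orbifold $H_\R/G$ is a compact Lie group and therefore has finitely many connected components, so $F$ is finite. This produces the desired exact sequence $1\to C\to Iso(H_\R/G)\to F\to 1$ with $C\subseteq S^1$ closed and $F$ finite, and the final dichotomy is immediate. The step I expect to be the main obstacle is the centralizer computation above: the twisted rotations $m_\theta$ act on the $z$--coordinate through the polynomial correction $\eta_\theta$, so one must argue carefully that no one--parameter family with nontrivial rotational part can centralize a full rank--two lattice, and must track how the central circle is affected by orientation--reversing symmetries. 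If one prefers a self--contained count of components avoiding the Myers--Steenrod--type input, the alternative is to project $N$ into $Iso(\R^2)$ and bound both $(N\cap\R)/K$ and the finite rotational stabiliser $\mathrm{Aut}(\Gamma)$ (at most $D_6$ by the lattice classification), which is the more computational route.
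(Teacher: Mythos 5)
Your proof is correct, but it takes a genuinely different route from the paper's after the common reduction step. Both arguments begin identically: finite volume plus Lemma \ref{lema:nil-infinite-volume} and Proposition \ref{prop:nil-discrete-proj} force the projected group $\Gamma$ to be crystallographic, $K = G \cap \R$ to be nontrivial, and $L = G \cap H_\R$ to be a finite-index lattice in $G$. From there the paper proceeds computationally: using Lemma \ref{lema:carac_latt} and Example \ref{ex:gen_nil_lattice} it describes the full normalizer $N_{Iso(H_\R)}(G)$ through its projection to $Iso(\R^2)$, which lands in $F'' \ltimes \Lambda$ with $\Lambda \supset \Gamma$ a lattice containing $\Gamma$ with finite index and $F'' \subset Aut(\Gamma)$ finite; the exact sequence, including the finiteness of $F$, is read off from this explicit description. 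You instead split the work: the identity component is identified via Proposition \ref{prop:centralizer-normalizer} with the image of the identity component of the centralizer, and your rotational-part/commutator computation correctly pins that centralizer inside the center $\R$ (the orientation caveat about the reflection $R$ inverting the center is also right); finiteness of the component group is then obtained softly from compactness of the isometry group of the compact orbifold $H_\R/G$. That Myers--Steenrod/van Dantzig--van der Waerden type input is legitimately available in this paper (it is invoked in Theorem \ref{Thm:general_isometry_gropu_S2_R}, Corollary \ref{cor:solv-isometry-group} and Theorem \ref{theo:isometryalex}), so there is no circularity. Your route is shorter and parallels the paper's own treatment of the $S^2 \times \R$ and Sol geometries; the paper's route buys more explicit information, namely the description of $F$ as $\left(F'' \ltimes \Lambda \right)/\left(F' \ltimes \Gamma\right)$, which is what feeds the subsequent remark identifying the maximally symmetric quotients with $D_4$ and $D_6$ symmetry. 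Indeed, the ``more computational route'' you sketch at the end as an alternative is precisely the paper's proof.

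One small ordering slip worth fixing: you justify that $\Gamma$ is crystallographic ``because the circle fibre is compact,'' but at that point you have not yet established $K \neq 1$ (you deduce it afterwards from item (i) of Proposition \ref{prop:nil-discrete-proj}), so the fibre is not yet known to be a circle. The repair is immediate and uses only facts you already cite: a discrete planar group that is not crystallographic has a translation subgroup of rank at most one and of finite index, hence fixes a point or preserves a line, which Lemma \ref{lema:nil-infinite-volume} has already excluded; crystallographic-ness therefore comes first, and $K \neq 1$ then follows from Proposition \ref{prop:nil-discrete-proj}.
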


\begin{proof}
By proposition \ref{prop:nil-discrete-proj} and Lemma \ref{lema:nil-infinite-volume}, the projection of $G$ to $Iso(\R^2)$ has a lattice $\Gamma \subset \R^2$ as a finite index subgroup. This is equivalent to the fact that $L = G \cap H_\R$ is a lattice in $H_\R$ and a finite index subgroup in $G$. By Lemma \ref{lema:carac_latt}, there are $u,v \in \R^2$, $\lambda, r,s \in \R$, with $\lambda \neq 0$, and $n \in \mathbb{N}$, such that $\Gamma = \{ ku + lv : k,l \in \Z \}$ and 
	\[	L = \left\langle \left(u , r \right), \left(v,s \right), \left(0,0,\frac{\lambda}{n} \right) \right\rangle \subset H_\R.	\]
As seen in Example \ref{ex:gen_nil_lattice}, the  group $N_{H_\R}(L) = \left\{	\left( \frac{n}{p} u + \frac{m}{p} v , r \right): n,m \in \Z, \ r \in \R \right\}$ is the normalizer of $L$ in $H_\R$. Denote by $Aut(\Gamma) \subset O(2)$ the subgroup that preserves the lattice $\Gamma$ and observe that an element $\varphi = \sigma \circ L_g \in Iso(H_\R)$ satisfies that $\varphi \circ L_h \circ \varphi^{-1} = L_{\sigma(g h g^{-1})}$. As $\sigma(g h g^{-1})$ and $\sigma(h)$ have the same projection onto $\Gamma$, then if $\varphi$ normalizes $L$, $\sigma \in Aut(\Gamma)$ and we have that
	\[	1 \rightarrow K \rightarrow G \rightarrow F' \ltimes \Gamma \rightarrow 1,	\]
for some subgroup $F' \subset Aut(\Gamma)$ and $K = L \cap \R$. As $H_\R$ is normal in $Iso(H_\R)$, we see that $N_{Iso(H_\R)}(G) \subset N_{H_\R}(G)$, and thus, by applying a trick as in Example \ref{remark:general_decreace_symmetries}, we may describe the greater normalizer as
	\[	1 \rightarrow H \rightarrow N_{Iso(H_\R)}(G) \rightarrow F'' \ltimes \Lambda \rightarrow 1,	\]
with $F'' \subset Aut(\Gamma)$ a finite group and $\Lambda \subset \R^2$ a lattice containing $\Gamma$. Thus, the isometry group is calculated as
	\[	1 \rightarrow C = H/K  \rightarrow N_{Iso(H_\R)}(G)/G \rightarrow F = \left(F'' \ltimes \Lambda \right) / \left(F' \ltimes \Gamma \right) \rightarrow 1,	\]
so that $C$ is either finite, cyclic or $S^1$ and $F$ is finite.
\end{proof}

\section{Spherical Geometry}
This   section is  largely  expository  due  to  the fact  that  the  verification of \ref{zimmer:3man} in the spherical case consists  of the comparison of  the  statement  with the (fundamentally  algebraic)  classification  of groups acting by isometries  on three dimensional spherical manifolds  and  orbifolds.   This  concerns  specifically  the quotient orbifold   of  an  action of  a discrete  group  on  a  spherical three-manifold, that is, a quotient of the form 
	\[	M= S^{3}/ \Gamma ,	\] 
for  $\Gamma$  a finite  subgroup  of $O(4)$. 
The  crucial  point  is  that  the  classification  of  orbifolds  up to orientation preserving  isometry is  equivalent   to  the classification of  subgroups  of $O(4)$. 

The  following  is   a  consequence  of the  classification  of  isometry  groups of spherical  $3$- manifolds  in \cite{mccullough},  tables  2  and  3  in  pages 173 and  176,  relying  on  work   of Mccullough and  collaborators  and  ultimately   going  back  to Seifert, Threlfall, Hopf  and Hattori.  See  \cite{diffeoelliptic}, chapter  1 for  an  account  of  these  facts. 
\subsection{Classification}
\begin{lemma}\label{lemma:isoelliptic}
Up  to  finite  subgroups, the isometry  groups  of   spherical  three  manifolds  are: 
\begin{itemize}
\item $SO(3)$.
\item $O(2)$. 
\item  $O(4)$.
\item  $SO(4)$.
\item $SO(3)$.
\item $ O(2)\times O(2)$
\item $S^{1}\times_{\mathbb{Z}/2} S^{1}$. 
\end{itemize}
In  particular,  these subgroups can be realized as closed  subgroups  of  $O(4)$. 
\end{lemma}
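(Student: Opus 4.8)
The plan is to reduce the statement to a centralizer computation and then read off the answer from the known classification. A spherical three-manifold is a free quotient $M = S^3/\Gamma$ with $\Gamma \subset O(4)$ a finite subgroup, and since $Iso(S^3) = O(4)$, the general principle recalled in the preliminary section gives $Iso(M) \cong N_{O(4)}(\Gamma)/\Gamma$. This already yields the essential \emph{in particular} clause almost for free: $N_{O(4)}(\Gamma)/\Gamma$ is a closed subgroup of the compact group $O(4)/\Gamma$, and since $\Gamma$ is finite, up to finite subgroups it is a closed subgroup of $O(4)$; moreover, by Proposition \ref{prop:centralizer-normalizer} its identity component coincides with that of the centralizer $Z_{O(4)}(\Gamma)$. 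Thus, up to finite subgroups, everything is governed by $Z_{O(4)}(\Gamma)^0$, and this is exactly the input needed for the $S^3$ case of Theorem \ref{zimmer:3man}.

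To obtain the explicit list I would pass to the spin double cover $Spin(4) = S^3 \times S^3$, acting on $S^3 \cong Sp(1) \subset \H$ by $(a,b)\cdot q = a\,q\,\bar b$, so that $SO(4) = (S^3 \times S^3)/\{\pm(1,1)\}$. A finite $\Gamma$ lifts to a finite $\widetilde{\Gamma} \subset S^3 \times S^3$ contained in the fiber product of its two coordinate projections $\Gamma_L, \Gamma_R \subset S^3$, and each of these is one of the classical finite subgroups of $Sp(1)$: cyclic, binary dihedral, or binary tetrahedral, octahedral, icosahedral. The computation then rests on the centralizer of a single such factor in $S^3$, which is all of $S^3$ when the factor is central, a maximal circle when the factor is non-central cyclic, and only $\{\pm 1\}$ when the factor is non-abelian. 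Combining the two sides, $Z_{Spin(4)}(\widetilde{\Gamma})^0$ is a product of such pieces; descending to $O(4)$ and adjoining the orientation-reversing symmetries then reproduces, up to finite subgroups, the groups in the statement. As illustrations: the non-abelian cases give $SO(3)$ or, after a reflection, $O(2)$; the lens-space cases where both sides are cyclic retain a two-torus and yield $O(2) \times O(2)$ or $S^1 \times_{\Z/2} S^1$ according to whether the residual $\Z/2$ interchanges the two circle factors; and the degenerate extremes produce $SO(4)$, with $O(4)$ realized by $M = S^3$ itself.

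Rather than re-derive the full enumeration, I would organize the argument as a verification against Tables 2 and 3 of \cite{mccullough} (and the survey in \cite{diffeoelliptic}): for each conjugacy class of freely acting $\Gamma$ one checks that the tabulated isometry group has, up to finite subgroups, one of the listed identity components, all of which are visibly closed subgroups of $O(4)$. The main obstacle is not any individual centralizer computation but the \emph{completeness and bookkeeping} of the case analysis: one must enumerate all conjugacy classes of freely acting finite subgroups of $O(4)$, treat correctly the linked (non-product) fiber-product subgroups $\widetilde{\Gamma}$, and determine in each family exactly which finite group of reflections and circle-interchanges is adjoined to the identity component. This is precisely the content of the cited classification, going back to Seifert, Threlfall, Hopf and Hattori, so the proof reduces to matching that data against the model groups listed above.
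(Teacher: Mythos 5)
Your proposal is correct and ultimately rests on the same foundation as the paper: the paper's entire justification for this lemma is the citation of Tables 2 and 3 of \cite{mccullough} (with \cite{diffeoelliptic} as background), and your final step is exactly that verification against those tables. The Spin(4)-centralizer framework you sketch (via $S^3 \times S^3$ and finite subgroups of $Sp(1)$) is a sound and useful explanation of \emph{why} the tabulated identity components arise, consistent with the paper's Proposition \ref{prop:centralizer-normalizer}, but it does not replace the cited classification, which both you and the paper invoke for completeness of the enumeration.
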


For a complete list of isometry groups of spherical orbifolds, see Chapter 3 of \cite{mecchiaseppi}.

{\bf Connected components}

An important result   by  Hatcher \cite{hatchersmale},  originally  conjectured  by  Smale states  that  the inclusion of  the  isometry group  of $S^{3}$  into  the group  of diffeomorphisms is  a homotopy  equivalence.

The  following  result with   contributions of  many  persons including  (at  least) Asano, Boileau, Bonahon, Birman, Cappell,  Ivanov, Rubinstein, and  Shaneson,    is  a  consequence of  research in  mapping  class  groups and three-  dimensional  spherical  manifolds. It  is  discussed  with  comments  about attribution in  \cite{mccullough}, Theorem  1.1 in  page 3. 
  
\begin{theorem}\label{theorem:pi0smale}
Let  $M$  be  a spherical  manifold, then the  inclusion  of  the  group  of isometries of  $M$  into the  group  of  diffeomorphisms induces a bijection  on path  components. 
\end{theorem}

As of 2022,  the   following   result  in page  2  of  \cite{bamlerkleiner}  is  a  consequence  of  the   study   via Ricci flow  methods of  the   homotopy  type  of  the  spaces  of  positive  scalar  curvature  and 
 the  subspace  of  metrics  which  are  locally  isometric to  either  the  round  sphere $S^{3}$   or  the  round cylinder  $S^{2}\times  \mathbb{R}$. 
 
\begin{theorem}\label{theorem:homotopytypespherical}
Let  $(M,g)$ be  a riemannian  manifold  which  is  an  isometric  quotient  of  the  three  dimensional  round  sphere.  Then, the  inclusion of  the  isometry  group   into the   diffeomorphism  group  is a  homotopy equivalence. 
\end{theorem}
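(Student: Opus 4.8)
The plan is to exhibit the inclusion as the fiber inclusion of a fibration whose total space is contractible, and to extract the contractibility from the Ricci-flow analysis of \cite{bamlerkleiner}. Write $M = S^3/\Gamma$ with $\Gamma \subset O(4)$ finite and acting freely, and let $g_0$ be the metric of constant sectional curvature $+1$ induced from the round sphere. Let $\mathcal{S}$ denote the space of \emph{all} Riemannian metrics on $M$ of constant sectional curvature $+1$, with the $C^\infty$-topology. The first move is to introduce the orbit map
\[
\Phi : {\rm Diff}(M) \longrightarrow \mathcal{S}, \qquad \Phi(\phi) = \phi^* g_0,
\]
whose fiber over $g_0$ is precisely the isometry group $Iso(M,g_0)$, since $\phi^* g_0 = g_0$ exactly when $\phi$ is an isometry of $g_0$.

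Next I would verify that $\Phi$ is a locally trivial fibration and is surjective. The fibration property follows from the Ebin--Palais slice theorem for the action of ${\rm Diff}(M)$ on the space of metrics: the stabilizer $Iso(M,g_0)$ is a compact Lie group, the action is proper, and the orbit through $g_0$ is a closed submanifold, so that restricting $\Phi$ to its image realizes the orbit as the base of a principal-type bundle
\[
Iso(M,g_0) \hookrightarrow {\rm Diff}(M) \xrightarrow{\ \Phi\ } \mathcal{S}.
\]
For surjectivity I would combine three facts: the orbit of $g_0$ is closed in $\mathcal{S}$ by properness; it is open in $\mathcal{S}$ because the round metric admits no nontrivial infinitesimal constant-curvature deformations, so the orbit directions span the tangent space to $\mathcal{S}$ (part of the slice computation); and $\mathcal{S}$ is connected. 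A clopen nonempty subset of a connected space is everything, so $\Phi$ is onto and the displayed sequence is a fibration over all of $\mathcal{S}$.

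The deep input, which I would quote directly from \cite{bamlerkleiner}, is that $\mathcal{S}$ is contractible: their Ricci-flow deformation retracts the space of metrics locally isometric to the round $S^3$ onto a point (and in particular $\mathcal{S}$ is connected, as used above). Granting this, the long exact sequence of the fibration forces the inclusion $Iso(M) = Iso(M,g_0) \hookrightarrow {\rm Diff}(M)$ to induce isomorphisms on all homotopy groups, including the bijection on $\pi_0$ already recorded in Theorem \ref{theorem:pi0smale}. Since $Iso(M)$ is a compact Lie group and ${\rm Diff}(M)$ has, by Palais, the homotopy type of a CW complex, Whitehead's theorem upgrades this weak equivalence to an honest homotopy equivalence, which is the assertion.

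I expect the entire difficulty to reside in the contractibility of $\mathcal{S}$: the fibration setup, the infinitesimal rigidity of the round metric, and the clopen argument for surjectivity are all formal once the slice theorem is available, whereas the contractibility of the space of round metrics is exactly the point where the Ricci-flow machinery of \cite{bamlerkleiner} is indispensable and cannot be replaced by soft homotopy-theoretic reasoning.
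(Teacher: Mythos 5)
Your argument is correct, and it is essentially the paper's approach: the paper offers no independent proof but quotes this statement directly from Bamler--Kleiner \cite{bamlerkleiner}, whose own deduction of the Generalized Smale Conjecture is exactly your fibration argument (Ebin--Palais slice theorem, surjectivity of the orbit map onto the space of round metrics, and the long exact sequence over that contractible base). The one step you gloss over --- openness of the orbit, i.e.\ local rigidity of the constant-curvature metric, which the slice theorem alone does not give --- is classical (representations of the finite group $\pi_1(M)$ into $O(4)$ are rigid), so nothing essential is missing from your sketch.
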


The  following  theorem  was  proved  in  \cite{mecchiaseppi},  using   previous  analysis  of  the  authors  of  Seifert  fibrations  for  spherical  orbifolds. It  is  a  consequence  of tables 2  in page  1302,  table  3  in page 1304  and  table  4  in  page 1308.
  
\begin{theorem}\label{theorem:mecchiaseppi}
Let $X$  be  a  spherical three-manifold, and  let  $\Gamma$  be   a  discrete  group of X. 
\begin{itemize}
\item The  isometry groups of the  orbifold  $X/\Gamma $  are  either  closed  subgroups  of $SO_{4}$  or $PSO_{4}$,  if  the action  is  orientation  preserving.
\item  The  identity  component  of  the isometry  groups  are $S^{1}$, $S^{1}\times  S^{1}$ or  trivial  for  the orientation preserving  case.
\end{itemize}
\end{theorem}
\begin{proof} [ End  of  the proof  of Theorem  \ref{zimmer:3man}  for the  spherical  geometry]  
T he result thus follows from Lemma \ref{lemma:isoelliptic}.
\end{proof}

\section{ $S^{2}\times \mathbb{R}$ geometry. }

A  three  dimensional  manifold  is   said  to  have  $S^{2}\times  \mathbb{R}$-geometry if  its  universal  covering is  homeomorphic  to $S^{2}\times  \mathbb{R}$. 

The  determination  of the  discrete isometry  groups  of spaces with $S^{2}\times \mathbb{R}$ geometry  is  a  consequence  of  well-known  facts,  which  we  will  gather  here.

We recall the following  result, proved  in 
\cite{kobayashinomizu}, Chapter  VI, Theorem  3.5. 

\begin{theorem}
    
Given  a   product  of riemannian   manifolds $M\times N$  with  $M$ of  constant  sectional  curvature  and  $N$ flat,  the   isometry  group  of $M\times  N$   decomposes  as  a  direct  product $ {\rm Iso }(M)\times {\rm Iso} (N).$ It  follows  that for  a  discrete subgroup $\Gamma\leq {\rm Iso} (S^{2}\times  \mathbb{R}) \cong O(3) \times (\R \times \Z_2)$, the projection onto the second factor $\pi_{\mathbb{R}}(\Gamma) \leq \mathbb{R}  \rtimes \mathbb{Z}/2$ is a discrete  subgroup.
\end{theorem}

We will  use  this    splitting  and  the  classification   of  finite  groups  acting  on  $S^{2}\times \mathbb{R}$, which   was proved  in  by  Tollefson in page 61 of \cite{tollefson},  as  follows: 

\begin{theorem}
There  exist  only  four three-manifolds  covered  by  $S^{2}\times \mathbb{R}$,namely: $S^{2}\times  S^{1}$, the  non  orientable  $S^{2}$- bundle  over  $S^{1}$, $\mathbb{R}P^{1}\times S^{1}$, and $\mathbb{R}P^{3} \#\mathbb{R} P^{2}$. Moreover, the finite  groups  which  act  freely  on $S^{2}\times S^{1}$ are  classified  in  \cite{tollefson}, Corollary  2. They  are: 
\begin{itemize}
\item $\mathbb{Z}/p$, producing  quotient  spaces  homeomorphic  to $S^{2}\times S^{1}$ in the  odd  case, and $\mathbb{R}P^{2}$ in the  even  case as  quotient  space.
\item $\mathbb{Z}/p\times \mathbb{Z}/2$, for $p$ even,  producing  a  quotient  space  homeomorphic   to  $\mathbb{R} P^{2}$, and 
\item $D_{n}$, the  dihedral  group  of  order  $2n$, producing $\mathbb{R} P^{3}\# \mathbb{R} P^{3}$ as  quotient  space.  

 \end{itemize}
 
\end{theorem}

\subsection{Classification  of discrete  groups  of  isometries.} 
The previous example gives us the general behaviour for discrete groups of isometries on $S^2 \times \R$ as seen by the following Lemma 
\begin{lemma}\label{lema:discrete_subgroups_S2_R}
If  $\Gamma \subset Iso(S^2 \times \R)$ is a discrete subgroup, then there is a finite group $F \subset O(3)$ and $\lambda \in \R$ such that the exact sequence
	\[	1 \rightarrow O(3) \rightarrow Iso(S^2 \times \R) \rightarrow \R \rtimes \Z_2	\]
induces an exact sequence $1 \rightarrow F \rightarrow \Gamma \rightarrow  L$, where $L$ is either $\lambda \Z$ or $\lambda \Z \rtimes \Z_2$.
\end{lemma}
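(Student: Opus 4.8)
The plan is to read off the kernel and the quotient of the claimed exact sequence directly from the product splitting $Iso(S^2\times\R)\cong O(3)\times(\R\rtimes\Z_2)$ coming from the Kobayashi--Nomizu decomposition \cite{kobayashinomizu}, and then to use compactness of the $O(3)$-factor to pin down discreteness. First I would let
	\[	\pi_\R : Iso(S^2\times\R) \longrightarrow \R\rtimes\Z_2 = Iso(\R)	\]
be the projection onto the second factor, and set $F := \Gamma\cap O(3) = \ker\bigl(\pi_\R|_\Gamma\bigr)$ together with $L := \pi_\R(\Gamma)$, so that $\Gamma$ tautologically fits into $1\to F\to\Gamma\to L\to 1$. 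The content of the lemma is then precisely that $F$ is finite and that $L$ has the stated shape. Finiteness of $F$ is immediate: it is a subgroup of the discrete group $\Gamma$ contained in the compact group $O(3)$, and a discrete subgroup of a compact Lie group is finite. I would emphasize here the asymmetry already exhibited by Example \ref{example:discrete_subgroup_S2_R}: the \emph{projection} of $\Gamma$ to $O(3)$ may be non-discrete, yet the \emph{intersection} $\Gamma\cap O(3)$ is always finite.

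The only substantive point is the discreteness of $L=\pi_\R(\Gamma)$, and I would prove it by the standard compactness argument (already recorded in the text preceding Example \ref{example:discrete_subgroup_S2_R}). Suppose $\pi_\R(\gamma_n)\to 1$ in $\R\rtimes\Z_2$, and write $\gamma_n=(a_n,b_n)$ with $a_n\in O(3)$ and $b_n=\pi_\R(\gamma_n)$. Compactness of $O(3)$ lets me pass to a subsequence with $a_n\to a$, whence $\gamma_n\to(a,1)$. Since $\Gamma$ is discrete and therefore closed in $Iso(S^2\times\R)$, the limit $(a,1)$ lies in $\Gamma$, and discreteness forces $\gamma_n=(a,1)$, hence $b_n=1$, for all large $n$. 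Thus the identity is isolated in $L$, so $L$ is a discrete subgroup of $Iso(\R)$.

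Finally I would classify the discrete subgroups of $Iso(\R)=\R\rtimes\Z_2$. The translation part $L\cap\R$ is a discrete subgroup of $\R$, hence equal to $\lambda\Z$ for some $\lambda\ge 0$, while the induced homomorphism $L\to\Z_2$ has image either trivial or all of $\Z_2$, according to whether $L$ contains an orientation-reversing isometry of the line. In the first case $L=\lambda\Z$, and in the second case $L$ is generated by $\lambda\Z$ together with a reflection, giving the infinite dihedral group $\lambda\Z\rtimes\Z_2$ (with the degenerate value $\lambda=0$ recovering the finite subgroups). This exhausts the possibilities and yields the two asserted forms of $L$, completing the identification of the exact sequence. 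The hard part is thus only the discreteness of $L$, and even that reduces to the compactness of $O(3)$; the remaining steps are bookkeeping with the product splitting and the elementary classification of discrete isometry groups of the line.
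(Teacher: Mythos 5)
Your proof is correct and follows essentially the same route as the paper: compactness of the $O(3)$-factor gives discreteness of the projection to $Iso(\R)$, the intersection $\Gamma\cap O(3)$ is finite as a discrete subgroup of a compact group, and the two possible forms of $L$ come from the elementary classification of discrete subgroups of $Iso(\R)$. You merely spell out the sequential compactness argument and the classification on the line that the paper leaves implicit, so there is nothing to add.
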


\begin{proof}
As the group $O(3)$ is compact, the projection of the discrete group $\Gamma$ onto $Iso(\R)$ is discrete, so it is of the form $\lambda \Z$ or $\lambda \Z \rtimes \Z_2$, for some $\lambda \in \R$. As $O(3)$ can be seen as a closed subgroup of $Iso(S^2 \times \R)$, then the intersection of $\Gamma$ with $O(3)$ is a finite group, which we denote by $F$. The result thus follows from the product structure of $Iso(S^2 \times \R)$. In fact, $\Gamma$ is generated by $F$, $\Z_2 \subset Iso(\R)$ and the twisted translation subgroup $\{ (\sigma^n , n \lambda) \in O(3) \times \R : n \in \mathbb{N} \}$, for some $\sigma \in O(3)$.
\end{proof}
{\bf Connected Components}
\begin{theorem}\label{Thm:general_isometry_gropu_S2_R}
If $\Gamma \subset Iso(S^2 \times \R)$ is a discrete subgroup, such that $(S^2 \times \R) / \Gamma$ is compact, then $Iso((S^2 \times \R) / \Gamma)$ is up to finite index, a closed subgroup of $SO(3) \times S^1$. In particular, the connected component of the identity of the isometry group of the quotient can only be one of the three possibilities:
	\[	SO(3) \times S^1, \quad S^1 \times S^1, \quad \textrm{or} \quad S^1.	\]
\end{theorem}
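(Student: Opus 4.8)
The plan is to compute $Iso((S^2\times\R)/\Gamma)$ as the quotient $N_{Iso(S^2\times\R)}(\Gamma)/\Gamma$, using the isomorphism $Iso(X)\cong N_{Iso(\widetilde X)}(\Gamma)/\Gamma$ established in the Generalities section, and to identify its identity component via Proposition \ref{prop:centralizer-normalizer}. First I would record that, since $(S^2\times\R)/\Gamma$ is a compact Riemannian orbifold, its isometry group is compact, so $N_{Iso(S^2\times\R)}(\Gamma)/\Gamma$ is automatically a compact Lie group with finitely many components. This disposes of the ``up to finite index'' clause once the identity component is identified, and reduces the problem to computing the latter.

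By Proposition \ref{prop:centralizer-normalizer} the identity component of $N/\Gamma$ is $\pi(Z_0)$, where $Z_0=N_0$ is the identity component of the centralizer of $\Gamma$. Since $Iso(S^2\times\R)\cong O(3)\times(\R\rtimes\Z_2)$ has identity component $SO(3)\times\R$, I would write a candidate centralizing element as $(B,s)$ with $B\in SO(3)$ and $s\in\R$, and impose commutation with the generators of $\Gamma$ supplied by Lemma \ref{lema:discrete_subgroups_S2_R}: the finite group $F\subset O(3)$, a twisted translation with $O(3)$-part $\sigma$ and $\R$-part $\lambda$, and, when $L=\lambda\Z\rtimes\Z_2$, an element $(\rho,c,-1)$ projecting to the reflection of $Iso(\R)$. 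Two computations govern the answer: commuting in the $O(3)$-factor forces $B\in Z_{SO(3)}(\overline{\langle F,\sigma,\rho\rangle})$; and conjugating $(c,-1)$ by the translation $(s,1)$ in $\R\rtimes\Z_2$ yields $(c+2s,-1)$, so a reflection in $\Gamma$ forces $s=0$, whereas in its absence $s$ is free.

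This produces a trichotomy. When $\Gamma$ has no reflection, the $\R$-factor survives, and the projection of $\Gamma$ to $\R$, being the cocompact $\lambda\Z$, collapses it to a circle; the identity component is then $SO(3)\times S^1$, $S^1\times S^1$, or $S^1$ according to whether $Z_{SO(3)}(\overline{\langle F,\sigma\rangle})_0$ is $SO(3)$ (when $\sigma$ and $F$ are central), $SO(2)$ (when they pin down a single common axis, in which case the residual rotational circle and the translational circle assemble into a two-torus), or trivial. When $\Gamma$ contains a reflection, $s=0$ removes the translational circle and the identity component is the compact connected group $Z_{SO(3)}(\overline{\langle F,\sigma,\rho\rangle})_0$, a closed subgroup of the $SO(3)$-factor. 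In every case the identity component is a compact connected subgroup of $SO(3)\times S^1$, and the finiteness of components from the first paragraph upgrades this to the full statement.

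The main obstacle is controlling the interaction between the finite group $F$ and the rotational part $\sigma$, where discreteness of $\Gamma$ is essential: if $\sigma$ is an irrational rotation, then an element of $F$ transverse to its axis would produce commutators $(\sigma^{\pm2},0)$ whose powers accumulate, contradicting discreteness, so $F$ must preserve the axis of $\sigma$, and this is exactly what forces $Z_{SO(3)}(\overline{\langle F,\sigma\rangle})$ to be the full circle $SO(2)$ rather than a finite group in the irrational case. The remaining delicate points are the reflection bookkeeping---checking via the identity $(s,1)(c,-1)(s,1)^{-1}=(c+2s,-1)$ that a reflection genuinely kills the continuous translational symmetry rather than merely shifting it---and verifying that the discrete identifications $\Gamma\cap Z_0$ are the rank-two lattices claimed, so that the quotients are honest tori and circles instead of lower-dimensional degenerations.
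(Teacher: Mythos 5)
Your strategy --- computing $Iso((S^2\times\R)/\Gamma)$ as $N_{Iso(S^2\times\R)}(\Gamma)/\Gamma$, identifying its identity component with the image of the centralizer via Proposition \ref{prop:centralizer-normalizer}, and feeding in the generators supplied by Lemma \ref{lema:discrete_subgroups_S2_R} --- is exactly the paper's, and your case analysis is in fact \emph{more} careful than the paper's own proof: the paper asserts that the centralizer ``always contains the $\R$-factor,'' which is precisely what your identity $(s,1)(c,-1)(s,1)^{-1}=(c+2s,-1)$ refutes whenever $\Gamma$ contains an element reversing the orientation of the $\R$-factor; the paper's proof tacitly drops the reflection case of Lemma \ref{lema:discrete_subgroups_S2_R} altogether. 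The genuine gap is your closing sentence. Finiteness of the component group plus ``the identity component is a closed connected subgroup of $SO(3)\times S^1$'' does \emph{not} upgrade to the stated trichotomy, because closed connected subgroups of $SO(3)\times S^1$ also include $SO(3)$ and the trivial group --- and your own reflection case produces exactly $Z_{SO(3)}(\overline{\langle F,\sigma,\rho\rangle})_0$, which can be either of these. So your argument proves the first assertion of the theorem but refutes, rather than proves, the ``in particular'' clause.

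To see that this is not a fixable slip in your write-up but a defect of the statement itself, take $\Gamma=\langle (I,T_\lambda),\,(-I,R_0)\rangle$, where $-I\in O(3)$ is the antipodal map, $T_\lambda$ is the translation of $\R$ by $\lambda\neq 0$, and $R_0$ is the reflection of $\R$ at $0$. This $\Gamma$ is discrete, $(S^2\times\R)/\Gamma$ is compact, and since $\pm I$ are central in $O(3)$ while your conjugation identity forces $s=0$, the identity component of the isometry group of the quotient is $SO(3)$ --- not on the list. Replacing the finite part by the Klein four-group of $\pi$-rotations about two orthogonal axes (together with the same reflection) makes the identity component trivial. The correct conclusion of your analysis is that the identity component is one of $SO(3)\times S^1$, $S^1\times S^1$, $S^1$, $SO(3)$, or trivial, the last two occurring only when $\Gamma$ contains a reflection-type element; equivalently, your trichotomy is complete and correct exactly under the additional hypothesis $\Gamma\subset O(3)\times\R$, which is what the paper's proof silently assumes. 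Your write-up should either state the enlarged list or add that hypothesis, rather than claim the ``full statement.''
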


\begin{proof}
By Lemma \ref{lema:discrete_subgroups_S2_R}, the discrete group $\Gamma$ is generated by a finite group of $O(3)$ and a twisted translation as in Example \ref{example:discrete_subgroup_S2_R}. The isometry group is compact, so it has a finite number of connected components and by Proposition \ref{prop:centralizer-normalizer}, the connected component of the identity can be computed using the centralizer, which always contains the $\R$-factor, so the result follows by examining the possible connected, closed subgroups of $SO(3)$.
\end{proof}

\subsection{Example of a non discrete subgroup of  isometries}
We may observe that the projection onto the $S^2$ factor of a discrete group of isometries need not be discrete as the following example shows:

\begin{example}\label{example:discrete_subgroup_S2_R}
If $\sigma \in SO(3)$ is a rotation with irrational angle along a fixed axis, so that the orbit $\{ \sigma^n(p) : n \in \mathbb{N} \}$ is dense in a circle, orthogonal to the rotation axis, for almost every $p \in S^2$, then the group given by twisted translations 
	\[	\{ (\sigma^n , n) \in O(3) \times \R : n \in \mathbb{N} \}	\]
is a discrete subgroup of $Iso(S^2 \times \R)$ with non-discrete projection on $Iso(S^2)$.
\end{example}

\section{Sol Geometry}

\subsection{Riemannian Geometry of Three Dimensional Sol-manifolds}
Sol-geometry is given by the solvable Lie group of upper-triangular matrices 
	\[	S = \left\{	
			\left(\begin{array}{ccc} e^t & 0 & x \\
						0 & e^{-t} & y \\
						0 & 0 & 1\end{array}\right) : x,y,t \in \R	\right\},	\]
which decomposes as a semidirect product $S =   [S,S] \rtimes (S / [S,S]) \cong  \R^2 \rtimes \R $. In global coordinates, the vector fields 
	\[	X_1(x,y,t) = (e^t, 0,0), \quad	X_2(x,y,t) = (0, e^{-t},0), \quad	X_3(x,y,t) = (0,0,1),	\] 
define a basis of left-invariant vector fields. We choose the left invariant Riemannian metric in $S$ having this basis as orthonormal basis, so that in our global coordinates, the metric has the expresion
	\[	ds^2 = e^{-2t} dx^2 + e^{2t} dy^2 + dt^2.	\]
The isometry group of this metric is generated by left translations
	\[	L_g : S \rightarrow S, \qquad L_g(h) = gh,	\]
and the group of reflections $(x,y,z) \rightarrow (\pm x, \pm y,\pm z)$, isomorphic to the Dihedral group $D_4$. In particular, $Iso(S)$ has eight connected components, with the connected component of the identity isomorphic to $S$ \cite{Scott}.
\subsection{Existence of lattices}

Consider  the  following
\begin{remark}\label{remark:unimodular}

a Lie group admits a lattice subgroup if and only if it is unimodular \cite{Ra}, and so, not every solvable group admit lattice subgroups.
\end{remark}
\begin{example}
A solvable group which is closely  related  to $S$ considered here, is the group of orientation preserving affine transformations on $\R$, given by
	\[	\mathrm{Aff}^+(\R) \cong \left\{	\left(\begin{array}{cc} e^t & x \\ 0 & 1 \end{array}\right) : x,t \in \R	\right\}.	\]
We could try for example, to exponentiate the set
\[	\Lambda = exp\left(\left\{\left(\begin{array}{cc} n & m \\ 0 & 0 \end{array}\right) : n,m \in \mathbb{N} \right\}\right)
		= \left\{\left(\begin{array}{cc} e^n & (m/n)(e^n - 1) \\ 0 & 1 \end{array}\right) : n,m \in \mathbb{N} \right\},
\]
however, such discrete set is not a subgroup and the group which generates is not discrete. The problem is that the group $\mathrm{Aff}^+(\R)$ is not unimodular, and in fact its modular function has the expression
	\[	\Delta : \mathrm{Aff}^+(\R) \rightarrow \R_+,	\qquad	\Delta \left(\begin{array}{cc} e^t & x \\ 0 & 1 \end{array}\right) = e^t,	\]
which is non-trivial. 
\end{example}

The solvable group $S$ is unimodular, so that it admits a lattice subgroup and an explicit way to construct a lattice is as follows: Consider a matrix $A \in SL_2(\Z)$, such that $tr(A) > 2$ and the group
	\[	\Gamma_A = \left\{\left(\begin{array}{cc} A^n & Z \\ 0 & 1 \end{array}\right):
		n \in \Z, \ Z \in M_{2 \times 1}(\Z)	\right\} \cong   \Z^2  \rtimes_A \Z .	\]

\begin{lemma}\label{lema:carac_lattices_sol}
For every $A \in SL_2(\Z)$, with $\mathrm{tr}(A)> 2$, $\Gamma_A$ is conjugated in $SL_3(\R)$ to a lattice subgroup in $S$, moreover, every lattice subgroup of $S$ is conjugated to one of such groups.
\end{lemma}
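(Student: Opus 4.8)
The plan is to use that the conditions $\det A = 1$ and $\mathrm{tr}(A) > 2$ force $A$ to have two real positive eigenvalues $\lambda > 1 > \lambda^{-1}$; writing $t_0 = \log\lambda > 0$, the diagonal form of $A$ is exactly the matrix $\mathrm{diag}(e^{t_0}, e^{-t_0})$ implementing the $\R$-action in $S = \R^2 \rtimes \R$. For the first assertion I would choose $P \in GL_2(\R)$ with $PAP^{-1} = \mathrm{diag}(e^{t_0}, e^{-t_0})$ and conjugate by $Q = \left(\begin{smallmatrix} P & 0 \\ 0 & 1\end{smallmatrix}\right)$, normalized by a real cube root of $\det P$ so that $Q \in SL_3(\R)$ (rescaling by a scalar does not change the conjugation). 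Since $PA^nP^{-1} = (PAP^{-1})^n$, the group $Q\Gamma_A Q^{-1}$ consists of the elements of $S$ with linear part $\mathrm{diag}(e^{nt_0}, e^{-nt_0})$ and translation part $PZ$, for $n \in \Z$ and $Z \in \Z^2$; that is, $Q\Gamma_A Q^{-1} = P\Z^2 \rtimes t_0\Z \subset S$. As $A\Z^2 = \Z^2$, the lattice $P\Z^2$ is preserved by $\mathrm{diag}(e^{t_0}, e^{-t_0})$, so this is a genuine subgroup; it is discrete and cocompact because it presents $S/(P\Z^2 \rtimes t_0\Z)$ as the total space of a torus bundle $\R^2/P\Z^2 \to S/(P\Z^2 \rtimes t_0\Z) \to \R/t_0\Z$ over the circle. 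Hence $\Gamma_A$ is conjugate in $SL_3(\R)$ to a lattice in $S$.

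For the converse, let $L \subset S$ be a lattice. The key structural input, which I would draw from the theory of lattices in simply connected solvable Lie groups (Mostow; see \cite{Ra}), is that the nilradical of $S$ is $\R^2 = [S,S]$, so that $\Lambda = L \cap \R^2$ is a cocompact lattice in $\R^2$ (hence $\Lambda \cong \Z^2$) and the projection $p : S \to S/[S,S] \cong \R$ carries $L$ onto a nontrivial discrete subgroup $t_0\Z$, $t_0 > 0$. Picking a lift $g_0 = (v_0, t_0) \in L$ of the generator, conjugation by $g_0$ acts on $\Lambda$ through the linear map $\phi = \mathrm{diag}(e^{t_0}, e^{-t_0})$, which preserves $\Lambda$; writing $\phi$ in a $\Z$-basis of $\Lambda$ gives a matrix $A \in SL_2(\Z)$ with eigenvalues $e^{\pm t_0}$, whence $\mathrm{tr}(A) = e^{t_0} + e^{-t_0} > 2$.

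It then remains to move $L$ onto $\Gamma_A$. First I would conjugate within $S$ by the translation $(w,0)$ with $w = -(I - \phi)^{-1} v_0$, which is defined since $t_0 \neq 0$ makes $I - \phi$ invertible; this conjugation fixes $\Lambda$ pointwise and turns $g_0$ into $(0, t_0)$, so $L$ becomes $\langle \Lambda, (0,t_0)\rangle = \Lambda \rtimes t_0\Z$. Taking $P \in GL_2(\R)$ to be the matrix whose columns form the chosen basis of $\Lambda$, one has $\Lambda = P\Z^2$ and $PAP^{-1} = \phi$, and the computation of the first part shows exactly that $Q\Gamma_A Q^{-1} = \Lambda \rtimes t_0\Z$ for $Q = \left(\begin{smallmatrix} P & 0 \\ 0 & 1\end{smallmatrix}\right)$. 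Composing the two conjugations exhibits $L$ as conjugate to $\Gamma_A$ in $SL_3(\R)$.

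The main obstacle is the converse, and specifically the structural step ruling out that $L$ projects densely into $S/[S,S] \cong \R$: a priori a discrete subgroup could have dense image in $\R$, and it is precisely the solvability of $S$ (via Mostow's theorem that lattices in simply connected solvable groups are uniform and meet the nilradical in a lattice) that forces the projection to be discrete and $\Lambda$ to be a full lattice. Once this is in hand, the identification of the representing matrix $A$ and the two explicit conjugations are routine; one should only take care that every conjugating element is normalized to lie in $SL_3(\R)$, which is harmless since $S \subset SL_3(\R)$ and scalar matrices act trivially by conjugation.
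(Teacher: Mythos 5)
Your proof is correct, and the first half is essentially the paper's argument: diagonalize $A$ as $\mathrm{diag}(e^{t_0},e^{-t_0})$, conjugate by the block matrix $\left(\begin{smallmatrix} P & 0 \\ 0 & 1\end{smallmatrix}\right)$, and get cocompactness from the torus-bundle-over-a-circle picture (the paper phrases this through an intermediate group $S_A \cong S$ rather than conjugating $\Gamma_A$ directly into $S$, but that is cosmetic).

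The converse is where you genuinely diverge. The paper does \emph{not} invoke Mostow: it proves, as a free-standing elementary lemma (Lemma \ref{prop:solv-discrete-projection}), that \emph{any} discrete subgroup of $S$ projects to a discrete subgroup of $S/[S,S]\cong\R$, by a case analysis on commutativity (a commutative group acts by translations on an invariant line; a non-commutative one is shown to meet $\R^2$ in a cocompact subgroup, after which the projection $S/(\Gamma\cap\R^2)\to S/\R^2$ has compact kernel). Your appeal to Mostow's theorem from \cite{Ra} (a lattice in a simply connected solvable group meets the nilradical $\R^2=[S,S]$ in a lattice, with discrete image in the quotient) is shorter and perfectly valid, but it only covers lattices; the paper's hand-made lemma is stronger in scope, and that extra generality is actually used later (Corollary \ref{cor:solv-isometry-group} applies it to discrete isometry groups before knowing they are lattices). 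So the trade-off is: your route imports a standard theorem and saves a page; the paper's route is self-contained and reusable. One place where your write-up is actually more careful than the paper's: after the linear conjugation taking $L\cap\R^2$ to $\Z^2$, the paper asserts that the translation part of the holonomy generator already lies in $\Z^2$, which does not follow from conjugation-invariance alone; your explicit unipotent conjugation by $(w,0)$ with $w=-(I-\phi)^{-1}v_0$, normalizing the generator to $(0,t_0)$, is exactly the step needed to make that assertion true, so your version closes a small gap in the paper's exposition.
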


\begin{proof}
Suppose first that $A \in SL_2(\Z)$, with $\mathrm{tr}(A)> 2$, then there is a matrix $B \in SL_3(\R)$ such that
	\[	BAB^{-1} = \left(\begin{array}{cc} e^\lambda & 0 \\ 0 & e^{- \lambda}	\end{array}\right),	\]
for some $\lambda \neq 0$. We may define  $A^t = B^{-1} \left(\begin{array}{cc} e^{t \lambda} & 0 \\ 0 & e^{- t \lambda}	\end{array}\right) B$, so that $\Gamma_A$ is a discrete subgroup of the group
\[	S_A= \left\{\left(\begin{array}{cc} A^t & Z \\ 0 & 1 \end{array}\right):
		t \in \R, \ Z \in M_{2 \times 1}(\R)	\right\} \cong \R \ltimes \R^2,	\]
such that
	\[	1 \rightarrow \R^2 / \Z^2 \rightarrow S_A/\Gamma_A \rightarrow \R / \Z \rightarrow 1,	\]
thus, $\Gamma_A$ is a lattice in $S_A$. Observe that we have an isomorphisms of Lie groups via the conjugation
	\[	S_A \rightarrow S, \qquad X \mapsto \left(\begin{array}{cc} B & 0 \\ 0 & 1 \end{array}\right) X \left(\begin{array}{cc} B^{-1} & 0 \\ 0 & 1 \end{array}\right),	\]
and thus, a lattice in $S$.

Before proceeding with the proof of this Lemma, we need to prove the following discrete projection Lemma:

\begin{lemma}\label{prop:solv-discrete-projection}
If $\Gamma \subset S$ is a discrete subgroup, then its projection $\overline{\Gamma} \subset S/[S,S] \cong \R$ is also discrete.
\end{lemma}

\begin{proof}
An element $\gamma = (x,y,t)$, with $t \neq 0$, acts discretely by translations on the line $\left\{	\left(\frac{x}{1-e^t}, \frac{y}{1-e^{-t}}, s \right): s \in \R	\right\} \subset S$, as $\gamma^n \left(\frac{x}{1-e^t}, \frac{y}{1-e^{-t}}, s \right) = \left(\frac{x}{1-e^t}, \frac{y}{1-e^{-t}}, s + n t \right)$. Thus, if $\Gamma$ is commutative, either $\Gamma \subset \R^2$ and its projection is trivial, or $\Gamma$ preserves a unique line on which it acts as translations and the action on this line is precisely the action on $\R$ of its projection, which must be discrete. If $\Gamma$ is non-commutative, then at least it has two elements $u= (a,b,0)$ and $\gamma = (x,y,t)$ with $t \neq 0$. Observe that if $b = 0$, then $\gamma u \gamma^{-1} = (e^t a -a, 0 ,0)$ and by iterating conjugation we get a non-discrete subgroup of $\R^2 \cap \Gamma$ which is impossible and the same goes for the case $a = 0$. Thus $a,b \neq 0$ and $\Gamma \cap \R^2$ contains two linearly independent vectors, say $u$ and $v = \gamma u \gamma^{-1} = (e^t a, e^{-t}b,0)$, which implies that $\Gamma \cap \R^2$ is cocompact in $\R^2$. $\Gamma / (\Gamma \cap \R^2)$ is discrete in $S/ (\Gamma \cap \R^2)$ and the projection $S /(\Gamma \cap \R^2) \rightarrow S / \R^2$ has compact Kernel $\R^2 / (\Gamma \cap \R^2) \cong S^1 \times S^1$, thus the corresponding projection of $\Gamma /(\Gamma \cap \R^2)$ into $S / [S,S]$ is discrete.
\end{proof}

Suppose now that $\Gamma \subset S$ is a lattice subgroup, then by Lemma \ref{prop:solv-discrete-projection}, the $\Gamma$ projects to a non-trivial discrete group $\R$, generated by an element $e^{n \beta}$, with $\beta \neq 0$. The intersection $\Gamma \cap \R^2$ is a lattice, so that there are $u, v \subset \R^2$ linearly independent, such that $\Gamma \cap \R^2 = \{n u + mv : n, m \in \Z\}$. Take $C \in GL_2(\R)$ the matrix sending $\Gamma \cap \R^2$ onto the canonical lattice $\Z^2$ and define the matrix $A' = B \left(\begin{array}{cc} e^{\beta} & 0 \\ 0 & e^{- \beta} \end{array}\right) B^{-1}$. An element $g = \left(\begin{array}{cc} B^{-1} A' B & W \\ 0 & 1 \end{array}\right) \in \Gamma$ must preserve $\Gamma \cap \R^2$, so that the element $h = \left(\begin{array}{cc} A' & BW \\ 0 & 1 \end{array}\right) \in \Gamma$ must preserve $\Z^2$. Observe that the action of $h$ in an element $v = (n,m) \in \Z^2$ is $A'v + BW \in \Z^2$, this implies that $BW \in \Z^2$ and $A' \in SL_2(\Z)$. In particular, the group $\Gamma$ is isomorphic to the group $\Gamma_{A'}$ and the isomorphism is obtained by conjugation.
\end{proof}

\begin{remark}\label{remark:solv-Q-structure}
The existence of lattices in the Lie group $S$ is related to the existence of a $\mathbb{Q}$-structure on $S$. More precisely, if $A \in SL_2(\Z)$, with $\mathrm{tr}(A) > 2$, and $c = \sqrt{\mathrm{tr}(A)^2 - 4}$, then $A$ is diagonalizable over the field $\mathbb{Q}(c)$, that is, there is a matrix $B \in SL_2(\mathbb{Q}(c))$ such that $BAB^{-1}$ is diagonal. If $Q_{ij}(X) = X_{ij}$ is the linear map that gives  the $(i,j)$-entry, then the group
	\[	\mathbb{G}(k) = \left\{\left(\begin{array}{cc} X & Z \\ 0 & 1 \end{array}\right):
		Z \in M_{2 \times 1}(k), \ Q_{ij}(BXB^{-1}) = 0, \ i \neq j, i,j \in \{1,2\} 	\right\},	\]
is algebraic subgroup of $SL_3(\R)$, defined by polynomial equations with coefficients over $\mathbb{Q}(c)$, such that $\mathbb{G}(\R) \cong S$ and $\mathbb{G}(\Z) = \Gamma_A$. Moreover, the Galois automorphism $\sigma : \mathbb{Q}(c) \rightarrow \mathbb{Q}(c)$, defined by $\sigma(c) = -c$, has a natural extension to automorphisms of matrices and polynomials, so that we have the embedding
	\[	SL_3(\mathbb{Q}(c)) \rightarrow SL_3(\R) \times SL_3(\R), \qquad Y \mapsto (Y, \sigma(Y)), 	\]
and a polynomial condition $Q(Y) = 0$ on $Y \in SL_3(\mathbb{Q}(c))$ is equivalent to the pair of polynomial conditions $Q(Y)+ \sigma(Q)(Y') = 0$ and $Q(Y) \sigma(Q)(Y') = 0$ on $(Y,Y') \in SL_3(\R) \times SL_3(\R)$, but the latter are polynomials with coefficients over $\mathbb{Q}$ (this trick is called ``restriction of scalars'' \cite{morris}).
\end{remark}

\begin{lemma}\label{lema:solv-centralizer}
$S$ has trivial center and the centralizer of a lattice group $\Gamma \subset S$ is also trivial.
\end{lemma}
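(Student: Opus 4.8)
The plan is to work throughout in the global coordinates $(x,y,t)$ for $S$, in which multiplying the two defining upper-triangular matrices gives the product law
$(x_1,y_1,t_1)(x_2,y_2,t_2) = (x_1 + e^{t_1}x_2,\ y_1 + e^{-t_1}y_2,\ t_1+t_2)$,
exhibiting the semidirect structure $S \cong \R^2 \rtimes \R$ where $\R$ acts by $t\cdot(x,y)=(e^t x, e^{-t}y)$. For the center I would impose that a fixed $(a,b,s)$ commute with an arbitrary $(x,y,t)$; equating the two products reduces to $a + e^s x = x + e^t a$ and $b + e^{-s} y = y + e^{-t}b$ holding for all $x,y,t$. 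Reading off the coefficient of $x$ forces $e^s=1$, hence $s=0$, after which the constant terms give $a(1-e^t)=0$ and $b(1-e^{-t})=0$ for all $t$, so $a=b=0$. Thus $Z(S)$ is trivial.

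For the centralizer of a lattice $\Gamma\subset S$ I would \emph{not} try to conjugate $\Gamma$ into the normal form $\Gamma_A$ of Lemma \ref{lema:carac_lattices_sol}, since that conjugation lives in $SL_3(\R)$ and need not preserve $S$. Instead I would invoke directly the two structural facts established in the proof of that lemma (together with Lemma \ref{prop:solv-discrete-projection}): that $\Gamma\cap\R^2$ is a full lattice in $[S,S]=\R^2$, so it contains two linearly independent translations $u=(u_1,u_2,0)$ and $v=(v_1,v_2,0)$; and that $\Gamma$ projects onto a nontrivial discrete subgroup of $S/[S,S]\cong\R$, so it contains an element $\gamma_0=(x_0,y_0,t_0)$ with $t_0\neq 0$.

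The computation then proceeds in two stages. First, writing out that a centralizing element $g=(a,b,s)$ commutes with $u$ yields $u_1(e^s-1)=0$ and $u_2(e^{-s}-1)=0$, and likewise for $v$; since $u$ and $v$ span $\R^2$ their first coordinates cannot both vanish, forcing $e^s=1$, i.e. $s=0$. Second, commuting the resulting $g=(a,b,0)$ with $\gamma_0$ gives $a(1-e^{t_0})=0$ and $b(1-e^{-t_0})=0$, and since $t_0\neq0$ we conclude $a=b=0$, so $g=e$. The only point I would flag as nonroutine is that the argument genuinely requires \emph{both} families of elements of $\Gamma$: the abelian lattice directions are what pin down $s=0$, while the hyperbolic element with $t_0\neq0$ is what kills the remaining translation part $(a,b)$. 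Commuting with either family alone is insufficient, so the proof relies essentially on the mixed $\R^2\rtimes\R$ structure of a lattice guaranteed by Lemma \ref{lema:carac_lattices_sol}.
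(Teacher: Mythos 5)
Your proof is correct and follows essentially the same route as the paper's: both first use elements of $\Gamma \cap [S,S] \cong \R^2$ to force the $t$-component of a centralizing element to vanish, then use an element of $\Gamma$ projecting nontrivially to $S/[S,S] \cong \R$ (guaranteed by Lemma \ref{prop:solv-discrete-projection}) to kill the remaining translation part. The only cosmetic differences are that you phrase the conditions as commutation equations rather than conjugation identities, and you deduce the first step from two linearly independent lattice vectors where the paper invokes a single vector $(a,b,0)$ with $a,b \neq 0$ from the proof of Lemma \ref{lema:carac_lattices_sol}; both variants are valid.
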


\begin{proof}
Take $\gamma = (x,y,t)$ in the centralizer of $\Gamma$ in $S$, then as in the previous proposition $\Gamma \cap [S,S]$ has a rank two subgroup, thus it contains at least a vector $u = (a,b,0)$ such that $a,b \neq 0$ and we have
	\[	\gamma u \gamma^{-1} = (e^t a , e^{-t} b, 0 ) = (a,b,0),	\]
which implies that $t = 0$. As $\Gamma$ projects to a lattice group in $S / [S,S] \cong \R$, then there is a $\beta \in \Gamma$ such that $\beta = (c,d,s)$ with $s \neq 0$ and thus
	\[	\beta \gamma \beta^{-1} = (e^s x, e^{-s} y, 0) = \gamma  = (x,y,0)	\]
which implies that $x= y= 0$ and $\gamma$ is the identity. A completely analogous computation shows that $S$ has trivial center.
\end{proof}

\begin{corollary}\label{cor:solv-isometry-group}
If $\Gamma$ is a discrete group of isometries of $S$ such that $S/\Gamma$ has finite volume, then $S/\Gamma$ is compact and has finite isometry group. 
\end{corollary}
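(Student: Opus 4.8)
The plan is to prove the statement in two stages: first that finite volume forces $S/\Gamma$ to be compact, and then that the isometry group of such a compact quotient is a compact, discrete, hence finite, Lie group. Since $S$ is the identity component of $Iso(S) = S \rtimes D_4$ and thus has index $8$, the subgroup $\Gamma_0 = \Gamma \cap S$ has finite index in $\Gamma$, and $S/\Gamma_0 \to S/\Gamma$ is a finite orbifold cover; so $S/\Gamma_0$ also has finite volume, and it suffices to analyse the discrete subgroup $\Gamma_0 \subset S$.

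First I would show that $\Gamma_0$ is a lattice (cocompact) in $S$. By Lemma \ref{prop:solv-discrete-projection} the image of $\Gamma_0$ under the projection $q : S \to S/[S,S] \cong \R$ is discrete. It cannot be trivial: if $\Gamma_0 \subset \R^2 = [S,S]$ then, since the left-invariant volume form on $S$ is $dx\,dy\,dt$ and the $t$-direction is left unidentified, $S/\Gamma_0$ would have infinite volume. Hence $q(\Gamma_0) = \beta\Z$ with $\beta \neq 0$, realized by some $\gamma \in \Gamma_0$ with $q(\gamma) = \beta$. A similar consideration shows $\Lambda := \Gamma_0 \cap \R^2$ must have rank $2$: conjugation by $\gamma$ acts on $\R^2$ as the diagonal map $\mathrm{diag}(e^\beta, e^{-\beta})$ and preserves $\Lambda$; if $\Lambda$ had rank $1$, this map would restrict to an automorphism of $\Lambda \cong \Z$, forcing the generator of $\Lambda$ to be an eigenvector with eigenvalue $\pm 1$, which is impossible since the eigenvalues $e^{\pm\beta} \neq \pm 1$, while rank $0$ would force $\Gamma_0 \cong \Z$, for which the slab $\{0 \le t < \beta\}$ is a fundamental domain of infinite volume. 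Thus $\Lambda$ is a lattice in $\R^2$, and the extension $1 \to \Lambda \to \Gamma_0 \to \beta\Z \to 1$ yields a fibration $\R^2/\Lambda \to S/\Gamma_0 \to \R/\beta\Z$ with compact total space. Therefore $S/\Gamma_0$, and hence $S/\Gamma$, is compact, and $\Gamma_0$ is one of the lattices classified in Lemma \ref{lema:carac_lattices_sol}.

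It remains to show $Iso(S/\Gamma)$ is finite. By the identification $Iso(S/\Gamma) \cong N_{Iso(S)}(\Gamma)/\Gamma$ from the Generalities section, and since $S/\Gamma$ is a compact Riemannian orbifold, $Iso(S/\Gamma)$ is a compact Lie group; it therefore suffices to prove it is discrete, i.e. that its identity component is trivial. By Proposition \ref{prop:centralizer-normalizer} this identity component equals $\pi(Z_0)$, where $Z_0$ is the identity component of the centralizer $Z_{Iso(S)}(\Gamma)$. Being connected, $Z_0$ lies in the identity component $S$ of $Iso(S) = S \rtimes D_4$, and it centralizes all of $\Gamma$, in particular the lattice $\Gamma_0 \subset S$; hence $Z_0 \subset Z_S(\Gamma_0)$, which is trivial by Lemma \ref{lema:solv-centralizer}. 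Thus $Z_0 = \{e\}$, so $Iso(S/\Gamma)$ is discrete and compact, hence finite.

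The main obstacle is the cocompactness step: ruling out the low-rank behaviour of both $q(\Gamma_0) \subset \R$ and $\Gamma_0 \cap \R^2$. This is exactly where the hyperbolic nature of the $\R$-action on $\R^2$ (eigenvalues $e^{\pm\beta}$ bounded away from $\pm 1$), combined with the discreteness of the projection supplied by Lemma \ref{prop:solv-discrete-projection}, is essential. Once compactness is in hand, the finiteness of the isometry group is comparatively formal, reducing to the triviality of the centralizer of a lattice in $S$ together with the compactness of the isometry group of a compact orbifold.
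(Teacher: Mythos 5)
Your proof is correct and takes essentially the same route as the paper's: reduce modulo finite index to $\Gamma_0 = \Gamma \cap S$, use Lemma \ref{prop:solv-discrete-projection} and the induced fibration $\R^2/\Lambda \to S/\Gamma_0 \to \R/\beta\Z$ to deduce compactness from finite volume, then conclude finiteness of the isometry group from Proposition \ref{prop:centralizer-normalizer} together with the trivial centralizer of Lemma \ref{lema:solv-centralizer}. The only difference is that you spell out, via the eigenvalue argument $e^{\pm\beta}\neq\pm1$ and the slab/cylinder volume estimates, the rank analysis that the paper merely asserts when it states that finite volume forces the fiber and the base to be tori.
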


\begin{proof}
As the connected component of the isometry group of $S$ is $S$ itself acting by left multiplications, $\Gamma$ is modulo a finite index subgroup a lattice in $S$ and it lies in an exact sequence
	\[	1 \rightarrow \Gamma_0 \rightarrow \Gamma \rightarrow \Gamma_1 \rightarrow 1	\]
where $\Gamma_0 = \Gamma \cap [S,S]$ and $\Gamma / \Gamma_0 \cong \Gamma_1 \subset \R$. By Proposition \ref{prop:solv-discrete-projection} $\Gamma_1$ is a discrete subgroup, so this exact sequence induces a the fiber bundle
	\[	\R^2 / \Gamma_0 \rightarrow S / \Gamma \rightarrow R / \Gamma_0,	\]
so that $S/ \Gamma$ has finite volume if and only if $\R^2 / \Gamma_0$ and $\R / \Gamma_1$ are torus of the corresponding dimension and $S/\Gamma$ is compact. The isometry group of $S/\Gamma$ is a compact Lie group with connected component of the identity determined by the centralizer of $\Gamma$ in $S$ (Proposition \ref{prop:centralizer-normalizer}) which is the trivial group by Lemma \ref{lema:solv-centralizer}, thus the isometry group is a compact, zero-dimensional Lie group, i.e. finite.
\end{proof}

\subsection{Examples}
\begin{example}\label{ex:sol-lattice}
For $A = \left(\begin{array}{cc} 2 & 1 \\ 1 & 1 \end{array}\right)$ and $n \in \mathbb{N}$, consider the lattice $\Gamma_{A^n} =   \Z^2\rtimes_{A^n} \Z$. A matrix $Y = \left(\begin{array}{cc} M & W \\ 0 & 1 \end{array}\right) \in GL_3(\R)$ normalizes $\Gamma_{A^n}$ if and only if $M = A^k$ for some $k \in \Z$ and $(I - A^n) W \in \Z^2$, so that if $\Lambda_n = (I - A^n)^{-1} \Z^2$, then the normalizer is $N_{Iso(S)}(\Gamma_{A^n}) = \Z \ltimes_A \Lambda_n$ and the isometry group is computed as
	\[	Iso(S/\Gamma_{A^n}) =  (\Lambda_n/\Z^2) \rtimes_A  \Z_n.	\]
Three ilustrative cases are
	\begin{enumerate}
		\item $\Lambda_1 = \Z^2$, so that $Iso(S/\Gamma_A)$ is trivial;

		\item $\mathrm{det}(I- A^2) = - 5$, so that $\Z^2 \leq \Lambda_2 \leq \frac{1}{5} \Z^2$ and each contention is of index $5$, in particular we have that $Iso(S/\Gamma_{A^2}) =  \Z_{5}\rtimes\Z_2  $;

		\item $\Lambda_5 = \frac{1}{11} \Z^2$, so that $Iso(S/\Gamma_{A^5}) =  (\Z_{11} \times \Z_{11})\rtimes_A \Z_5$.
	\end{enumerate}
\end{example}

\subsection{Classification of  free actions}
\begin{remark}
The  previous family of examples exhibits isometric actions of each finite cyclic group on a three dimensional solvmanifold. Moreover, we should notice that such actions are necesarily not free, since there exists a very rigid classification of free actions of finite groups on three dimensional manifolds with Nil and Sol structure, based on $p$-rank estimates and P.A. Smith Theory, \cite{jolee}, \cite{kooohshin}.
\end{remark}

\section{Hyperbolic geometry}

\subsection{Normalizers of Fuchsian groups.}

Denote by $\H^n$ the n-dimensional hyperbolic space and recall that the isometry group $Iso(\H^n)$ is a non-compact semisimple Lie group that can be identified with the group $PO(n,1)$.
We  begin  by  recalling  the following  properties  of  normalizers  of  discrete  subgroups  of isometries. 

\begin{lemma}\label{lema:hyperbolic-isometry-group}
If $\Gamma \subset Iso(\H^n)$ is a discrete subgroup such that $\H^n /\Gamma$ has finite volume, then the normalizer group
	\[	\Lambda = \{ g \in Iso(\H^n) : g h g^{-1} = h, \ \forall \ h \in \Gamma \} \subset Iso(\H^n)	\]
is discrete and $Iso(\H^n/ \Gamma)$ is a finite group.
\end{lemma}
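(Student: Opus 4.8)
The plan is to route everything through the identification $Iso(\H^n/\Gamma) \cong N_{Iso(\H^n)}(\Gamma)/\Gamma$ from the Generalities section, writing $N = N_{Iso(\H^n)}(\Gamma)$. By Proposition \ref{prop:centralizer-normalizer} the identity component of $N$ agrees with that of the centralizer of $\Gamma$, and this centralizer is exactly the group $\Lambda$ of the statement. So the first task is to show that $\Lambda$ is discrete; this forces $N_0 = \Lambda_0 = \{e\}$, hence $N$ is a discrete (zero-dimensional) group and $Iso(\H^n/\Gamma) = N/\Gamma$ is automatically discrete. The genuinely substantive step is then to upgrade this discreteness to \emph{finiteness}.

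For the discreteness of $\Lambda$ I would argue by boundary dynamics rather than invoke structure theory. Since $\H^n/\Gamma$ has finite volume, $\Gamma$ is a lattice, hence non-elementary, and its limit set is all of $\partial\H^n$; in particular the fixed points of the hyperbolic elements of $\Gamma$ are dense in $\partial\H^n$. If $g \in \Lambda$, then $g\gamma = \gamma g$ for every $\gamma \in \Gamma$, so for a hyperbolic $\gamma$ with attracting fixed point $\xi^+$ the equality $g(\gamma^k x) = \gamma^k(gx)$ together with $\gamma^k(gx) \to \xi^+$ and $g(\gamma^k x) \to g\xi^+$ (continuity of $g$ on $\overline{\H^n}$) forces $g\xi^+ = \xi^+$, and symmetrically $g\xi^- = \xi^-$ as $k \to -\infty$. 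Thus $g$ fixes a dense subset of $\partial\H^n$, hence all of $\partial\H^n$ by continuity, so $g = \mathrm{id}$. This shows $\Lambda$ is in fact trivial, a fortiori discrete; alternatively one could deduce triviality from the Borel density theorem (the lattice $\Gamma$ is Zariski dense) together with the fact that $PO(n,1)$ has trivial center.

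To finish I would run a covolume argument. Since $\Gamma \subseteq N$ and $N$ is discrete, $N$ is itself a lattice with $\mathrm{vol}(\H^n/N) \le \mathrm{vol}(\H^n/\Gamma) < \infty$ and $[N:\Gamma]\,\mathrm{vol}(\H^n/N) = \mathrm{vol}(\H^n/\Gamma)$. The Kazhdan--Margulis theorem furnishes a uniform positive lower bound $v_n$ on the covolume of any lattice in the semisimple group $PO(n,1)$, so $[N:\Gamma] \le \mathrm{vol}(\H^n/\Gamma)/v_n < \infty$ and $Iso(\H^n/\Gamma)$ is finite. For $n \ge 3$ one could instead use that $\Lambda$ is trivial to embed $N/\Gamma \hookrightarrow \mathrm{Out}(\Gamma)$ and apply Mostow rigidity, but this route fails at $n=2$, where $\mathrm{Out}(\Gamma)$ is the infinite mapping class group. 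I therefore expect the main obstacle to be exactly this uniform finiteness step: discreteness is cheap once the centralizer is pinned down, but eliminating infinitely many isometries simultaneously in the surface case and the higher-dimensional cases requires a genuine global input, namely the lower bound on hyperbolic covolumes (equivalently the Margulis lemma). The only delicate point in the centralizer argument is verifying that commuting with $\gamma$ fixes \emph{both} endpoints of its axis and that density on $\partial\H^n$ plus continuity yields the identity.
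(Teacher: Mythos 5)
Your proof is correct, and its core step is genuinely different from the paper's. Both arguments share the same skeleton: identify $Iso(\H^n/\Gamma)$ with $N/\Gamma$ for $N = N_{Iso(\H^n)}(\Gamma)$, use Proposition \ref{prop:centralizer-normalizer} to reduce discreteness of $N$ to controlling the centralizer, and then finish with a covolume comparison. Where you diverge is in pinning down the centralizer: the paper lifts to $O(n,1)$ and applies the Borel density theorem --- the polynomial $X \mapsto gXg^{-1}-X$ vanishes on the Zariski-dense lattice $\Gamma$, hence on all of $O(n,1)$, forcing $g$ into the (finite) center --- whereas you argue by boundary dynamics: commuting with a hyperbolic $\gamma$ forces $g$ to fix both endpoints of its axis, these endpoints are dense in $\partial\H^n$ because the limit set of a lattice is the full boundary, and an isometry fixing $\partial\H^n$ pointwise is the identity. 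Your route is more elementary (no algebraic groups, no passage to a linear cover) and gives the sharper conclusion that the centralizer is trivial rather than merely central; the paper's route is shorter and is the one that generalizes verbatim to lattices in other semisimple groups --- and you correctly flag it as the alternative. Two smaller points. First, your finiteness step is heavier than necessary: your own displayed relation $[N:\Gamma]\,\mathrm{vol}(\H^n/N) = \mathrm{vol}(\H^n/\Gamma)$ already yields finiteness, since any discrete group has positive covolume (its Dirichlet domain has nonempty interior); this is exactly the paper's argument, $|\Lambda/\Gamma| = Vol(F_\Gamma)/Vol(F_\Lambda) < \infty$, and no uniform Kazhdan--Margulis lower bound is needed --- so your assessment that uniform covolume bounds are the ``main obstacle'' overstates the difficulty. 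Second, you implicitly handled a wrinkle in the statement itself: the set $\Lambda$ as written (the condition $ghg^{-1}=h$ for all $h \in \Gamma$) is the centralizer, although the text calls it the normalizer and the paper's proof treats it as the normalizer (e.g.\ it asserts $\Gamma \subset \Lambda$); your argument establishes both that the centralizer is trivial and that the normalizer is discrete, so it covers either reading.
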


\begin{proof}
Passing to a finite cover doesn't alter the outcome, so we may suppose that $\Gamma, \Lambda \subset O(n,1)$. By Proposition \ref{prop:centralizer-normalizer}, the connected component of $\Lambda$ lies inside the centralizer of $\Gamma$ in $O(n,1)$. Let $g \in O(n,1)$ centralizing $\Gamma$, then the polynomial
	\[	P_t : M_{n+1}(\R) \rightarrow M_{n+1}(\R), \qquad P_t(X) = g X g^{-1} - X	\]
vanishes at $\Gamma$ but by Borel's density Theorem (see \cite{Fu}), $\Gamma$ is Zariski dense in $O(n,1)$ and thus $P_t(O(n,1)) = 0$ which tells us that $g$ lies in the center of $O(n,1)$, which is finite. This tells us that $\Lambda$ is a discrete group that contains the lattice $\Gamma$, so $\Lambda$ is also a lattice in $O(n,1)$. If $F_\Lambda, F_\Gamma \subset \H^n$ are fundamental domains of the groups $\Lambda$ and $\Gamma$ correspondingly, so we have that 
	\[	|Iso(\H^n/ \Gamma)| = |\Lambda / \Gamma| = Vol(F_\Gamma) /Vol(F_\Lambda) < \infty. 	\]
\end{proof}

\begin{remark}\label{remark:realization_hyperbolic_finite_isom}
The previous result is stated for hyperbolic manifolds in Corollary 3, Section 12.7 of \cite{Rat} and for hyperbolic orbifolds in \cite{ratcliffe99}, where the hypotheses are that the discrete group is non elementary, geometrically finite and without fixed $m$-planes, for $m < n-1$. In Lemma \ref{lema:hyperbolic-isometry-group} we presented an argument using Zariski-density of the lattice group in $Iso(\H^n)$, which implies for example the non-existence of fixed $m$-planes. As seen in \cite{Greenberg}, every finite group can be realized as the isometry group of a compact hyperbolic surface as in Lemma \ref{lema:hyperbolic-isometry-group}.
\end{remark}

\subsection{Rank of isometries   and Lie  groups  acting  on hyperbolic  surfaces.  }

\begin{lemma}\label{lema:circle-actions-surfaces}
If $\Sigma$ is a compact, orientable surface of genus $g \geq 2$, then there are no faithful actions of the compact group $S^1$ on $\Sigma$.
\end{lemma}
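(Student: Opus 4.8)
The plan is to obstruct such an action by the Euler characteristic, which for a compact orientable surface of genus $g$ equals $\chi(\Sigma) = 2 - 2g \leq -2 < 0$ once $g \geq 2$. Since the circle $S^1$ is connected, a faithful action is in particular nontrivial. In the setting relevant here the circle sits inside an isometry group, so the action is by isometries of a smooth Riemannian metric (and in any case averaging an arbitrary metric over the compact group $S^1$ produces an invariant one), hence it is smooth and is generated by a Killing vector field $V$ whose zero set coincides with the fixed-point set $\Sigma^{S^1}$.

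First I would analyze the fixed-point set. Because the action is isometric, the exponential map $\exp_p$ at a fixed point $p$ furnishes an equivariant chart, so the action near $p$ is linear and modeled on the isotropy representation of $S^1$ on $T_p\Sigma \cong \R^2$. The only orthogonal representations of $S^1$ on $\R^2$ are the trivial one and the rotations $z \mapsto e^{ik\theta}z$ with $k \neq 0$. If the isotropy representation at some fixed point were trivial, the action would be trivial on a neighborhood of $p$; since $\Sigma$ is connected, the fixed set would then be open and closed, forcing the whole action to be trivial and contradicting faithfulness. Hence every fixed point is isolated with nontrivial rotational isotropy, and $\Sigma^{S^1}$ is a finite set of points.

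Next I would apply the Poincar\'e--Hopf theorem to $V$. At an isolated rotational zero the generating field is, in the equivariant chart, a nonzero multiple of $-y\,\partial_x + x\,\partial_y$, whose index is $+1$ regardless of the sign of $k$. Summing over the (finitely many, possibly zero) zeros of $V$ yields
\[
2 - 2g = \chi(\Sigma) = \sum_{p \in \Sigma^{S^1}} \mathrm{ind}_p(V) = \#\,\Sigma^{S^1} \geq 0,
\]
which is impossible for $g \geq 2$. A fixed-point-free action is not a separate case: it simply makes $V$ nowhere vanishing, whence $\chi(\Sigma) = 0$, again incompatible with $g \geq 2$.

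The step I expect to require the most care is the reduction to a smooth, well-behaved action, since both the equivariant linearization at fixed points and the Poincar\'e--Hopf count are smooth tools; if one only assumed a continuous circle action this would need the classification of periodic surface homeomorphisms, but in the present isometric context the action is smooth from the outset and this obstacle disappears. The remaining ingredients---the linearization of the isotropy representation and the computation that a rotational zero has index $+1$---are standard, so the argument is essentially the observation that a nontrivial $S^1$-action on a surface forces $\chi(\Sigma) = \chi(\Sigma^{S^1}) \geq 0$.
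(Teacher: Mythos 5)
Your proof is correct, but it follows a genuinely different route from the paper's. The paper also begins by averaging to make the action isometric, but then passes through complex geometry: isothermal coordinates make the $S^1$-action holomorphic, uniformization lifts it to the hyperbolic plane, so the action preserves a finite-volume hyperbolic metric on $\Sigma$, contradicting Lemma \ref{lema:hyperbolic-isometry-group} (finiteness of isometry groups of finite-volume hyperbolic quotients, itself proved via Zariski density). You instead use a purely topological obstruction: the Killing field generating the action has only isolated zeros of index $+1$ (by linearizing the isotropy representation at fixed points), so Poincar\'e--Hopf forces $\chi(\Sigma) \geq 0$, which fails for $g \geq 2$. Your argument is more elementary and self-contained --- it needs no uniformization and no Borel density, and it applies verbatim to any closed surface of negative Euler characteristic, orientable or not --- whereas the paper's argument has the advantage of reusing machinery already established in that section and staying inside the hyperbolic-geometry framework that the surrounding results (Corollary \ref{cor:isometry-group-surfaces}, Theorem \ref{teo:isometry-group-product}) depend on. One small point worth tightening in your write-up: the step from ``trivial isotropy representation at a fixed point'' to ``trivial action'' is cleanest via the standard rigidity fact that an isometry of a connected Riemannian manifold fixing a point with identity differential is the identity; this gives the conclusion immediately, without the open-closed argument on the fixed set (which, as stated, needs the extra observation that limit points of interior fixed points also have trivial isotropy).
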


\begin{proof}
	Suppose there is a faithful action $S^1 \times \Sigma \rightarrow \Sigma$, then perhaps after an averaging process, we may suppose that the action is isometric with respect to a Riemannian metric $h$. The existence of isothermal coordinates \cite{Yamada} tells us that there exists a complex structure in $\Sigma$ such that in holomorphic coordinates $z = x + i y$, the vector fields $\partial_x$ and $\partial_y$ are $h$-orthogonal. As the $S^1$-action is $h$-isometric, it preserves angles and orientation in the isothermal coordinates and thus it is an action by holomorphic transformations. By the uniformization Theorem, the universal cover of $\Sigma$ is the hyperbolic semiplane $\H^2 \subset \C$ and the holomorphic automorphisms of $\Sigma$ lift to holomorphic automorphisms of $\H^2$ which also are isometric automorphisms with respect to the hyperbolic metric. As a consequence of this, we have that the $S^1$-action preserves a hyperbolic metric in $\Sigma$ which has finite volume, because $\Sigma$ is compact, but this contradicts Lemma \ref{lema:hyperbolic-isometry-group}.
\end{proof}

\begin{corollary}\label{cor:isometry-group-surfaces}
If $\Sigma$ is a compact, orientable surface of genus $g \geq 2$ and $h$ is a Riemannian metric in $\Sigma$, then the isometry group $Iso(\Sigma,h)$ is finite.
\end{corollary}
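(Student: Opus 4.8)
The plan is to deduce finiteness from the non-existence of faithful circle actions established in Lemma \ref{lema:circle-actions-surfaces}, combined with the general structure theory of isometry groups of compact Riemannian manifolds. First I would invoke the Myers--Steenrod theorem: since $\Sigma$ is compact, the isometry group $Iso(\Sigma, h)$ is a compact Lie group acting smoothly on $\Sigma$. Moreover this action is faithful, because an isometry fixing every point of $\Sigma$ is the identity.

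Next I would argue by contradiction. Suppose $Iso(\Sigma, h)$ is infinite. A compact Lie group has only finitely many connected components, so an infinite compact Lie group must have an identity component $Iso_0(\Sigma, h)$ of positive dimension. A connected compact Lie group of positive dimension contains a maximal torus of positive dimension, and in particular it contains a circle subgroup $S^1 \subset Iso_0(\Sigma, h)$.

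This $S^1$ then acts on $\Sigma$ by isometries, and the action is faithful, being the restriction to a subgroup of the faithful action of $Iso(\Sigma, h)$ on $\Sigma$. We thus obtain a faithful isometric---hence in particular faithful---$S^1$-action on the compact orientable surface $\Sigma$ of genus $g \geq 2$, directly contradicting Lemma \ref{lema:circle-actions-surfaces}. Therefore $Iso(\Sigma, h)$ cannot be infinite, and being a compact Lie group it is finite.

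I expect the only point requiring care to be the passage from ``infinite compact Lie group'' to ``contains a circle subgroup'', which rests on the finiteness of the component group together with the fact that a positive-dimensional compact connected Lie group contains a nontrivial torus; both are standard facts about compact Lie groups. Note that no averaging step is needed here, in contrast with the proof of the preceding lemma, since the metric $h$ is given at the outset and $Iso(\Sigma,h)$ is already compact. The remainder is an immediate application of Lemma \ref{lema:circle-actions-surfaces}.
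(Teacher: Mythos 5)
Your proof is correct and follows essentially the same route as the paper's: both reduce finiteness to the non-existence of faithful $S^1$-actions from Lemma \ref{lema:circle-actions-surfaces}, using compactness of $Iso(\Sigma,h)$ and the finiteness of its component group to extract a circle subgroup from a positive-dimensional identity component (the paper obtains the circle by closing up a one-parameter subgroup into a torus, while you invoke the maximal torus, an inessential difference). No gaps.
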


\begin{proof}
	As $\Sigma$ is compact, the isometry group $G = Iso(\Sigma,h)$ is a compact Lie group. If $\g$ denotes the Lie algebra of $G$, then for every $X \in \g$, the one parameter group $\{exp(tX)\}$ is a commutative group whose closure is a compact, commutative Lie group with connected component of the identity isomorphic to a product $S^1 \times \cdots \times S^1$. As a consequence of this and the fact that $G$ has only has finitely many connected components, if $G$ is infinite, then it has a closed subgroup isomorphic to $S^1$, but this is impossible as is shown in Lemma \ref{lema:circle-actions-surfaces}.
\end{proof}
\subsection{Non-Classification of  finite hyperbolic  groups  of  isometries. }
\begin{remark}
    It  is proved  in \cite{kojima} that  every  finite  group  can be  realized as the isometry group  of a closed hyperbolic  manifold  of  dimension  three. 
    
\end{remark}


\section{Finer classification of 2-dimensional hyperbolic isometries. 
}\label{section:finer} 

Recall that in dimension two, the  group $SL_2(\R)$  acts on $\H^2$ by isometries  in the  form of M\"obius transformations, so that we have a realization of the orientation preserving isometries as $Iso(\H^2) \cong PSL_2(\R)$. 
\subsection{Classification of  elements in $SL_{2}$ according  to their  fixed  point  sets on the visual  compactification of $\mathbb{H}^{2}$. }
We  recall  the classification  of  elements in $SL_{2}(\mathbb{R})$
An element $A \in SL_2(\R)$ has as a characteristic polynomial $p_A(x) = x^2 - tr(A) x + 1$, and discriminant $tr(A)^2 - 4$. Thus, there are three dynamically different possibilities for the isometry of $\H^2$ generated by $A$, characterized by the sign of $tr(A) - 2$:

\begin{itemize}
	\item $tr(A) - 2 > 0$, where the matrix is conjugated to a diagonal matrix over $\R$, and thus, the conjugated isometry is contained in the one parameter group of isometries generated by 
		\[	\left\{ \mathrm{exp}\left( \begin{array}{cc} t & 0 \\ 0 & -t \end{array} \right) = \left( \begin{array}{cc} e^t & 0 \\ 0 & e^{-t} \end{array} \right) : t \in \R \right\}.	\]
	One isometry of this type is called hyperbolic, and the one-parameter group generated by this matrix is characterized by the property of having two fixed points in the boundary $S^1 = \partial \H^2$ and preserves a foliation determined by the two points and guided by the geodesic that joints the two points (in the case of diagonal matrices, this is $\{ 0 , \infty \}$).

	\item $tr(A) - 2 = 0$, where the matrix is conjugated over $\R$ to an upper triangular matrix, and thus, the conjugated isometry is contained in the one parameter group of isometries generated by
		\[	\left\{ \mathrm{exp}\left( \begin{array}{cc} 0 & t \\ 0 & 0 \end{array} \right) = \left( \begin{array}{cc} 1 & t \\ 0 & 1 \end{array} \right) : t \in \R \right\}.	\]
	One isometry of this type is called parabolic, and the one-parameter group generated by this matrix is characterized by the property of having one fixed point in the boundary $\partial \H^2$ and preserving the foliation of horocycles tangent to the fixed point (in the upper triangular case, the horocycles that are tangent to $\infty$ are just horizontal lines).

	\item $tr(A) - 2 < 0$, where the matrix is conjugated over $\R$ to a rotation matrix, so that the conjugated isometry is contained in the one parameter group of isometries generated by 
		\[	\left\{ \mathrm{exp}\left( \begin{array}{cc} 0 & -t \\ t & 0 \end{array} \right) = \left( \begin{array}{cr} \cos(t) & -\sin(t) \\ \sin(t) & \cos(t) \end{array} \right): t \in \R \right\}.	\]
	One isometry of this type is called elliptic, and the one-parameter group generated by this matrix is characterized by the property of having one fixed point in the interior of $\H^2$ and preserving a foliation of circles.
	\end{itemize}

\begin{lemma}\label{lema:isometries-commutativity-fixedpoints}
If $\alpha, \beta \in PSL_2(\R)$ are two non-trivial elements, then
	\begin{enumerate}
	\item $\alpha$ and $\beta$ commute if and only if $Fix(\alpha) = Fix(\beta)$,

	\item $C(\alpha) = \{ \beta \in PSL_2(\R) : \alpha \beta = \beta \alpha \} = \{ exp(tX) : t \in \R \}$, for some $X \in \mathfrak{sl}_2(\R)$. In particular $C(\alpha)$ is isomorphic to either $\R$ or $S^1$.
	\end{enumerate}
\end{lemma}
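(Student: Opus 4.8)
The plan is to work in the compactification $\overline{\H^2} = \H^2 \cup \partial\H^2$ and to use the trichotomy recalled just above: a non-trivial element of $PSL_2(\R)$ is elliptic (a single interior fixed point, contained in a rotation one-parameter group), parabolic (a single boundary fixed point, contained in a unipotent one-parameter group), or hyperbolic (two boundary fixed points, contained in a diagonalizable one-parameter group). The basic observation, used repeatedly, is that if $\alpha\beta = \beta\alpha$ and $\alpha p = p$ then $\alpha(\beta p) = \beta(\alpha p) = \beta p$, so $\beta$ permutes the set $Fix(\alpha)$.

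For the forward implication of (1) I would split into the three types of $\alpha$. If $\alpha$ is elliptic with fixed point $z_0$, then $\beta$ fixes $z_0$, hence lies in the stabilizer of $z_0$, a conjugate of $PSO(2)$; thus $\beta$ is elliptic with $Fix(\beta) = \{z_0\} = Fix(\alpha)$. If $\alpha$ is parabolic, I normalize its fixed point to $\infty$ so that $\alpha = \begin{pmatrix} 1 & t \\ 0 & 1\end{pmatrix}$ with $t \neq 0$; since $\beta$ fixes $\infty$ it is upper triangular, and comparing $\alpha\beta$ with $\beta\alpha$ forces the two diagonal entries of $\beta$ to coincide, so $\beta$ is unipotent and $Fix(\beta) = \{\infty\} = Fix(\alpha)$. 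If $\alpha$ is hyperbolic, I normalize $Fix(\alpha) = \{0,\infty\}$ and $\alpha = \begin{pmatrix} e^t & 0 \\ 0 & e^{-t}\end{pmatrix}$; then $\beta$ permutes $\{0,\infty\}$, and a transposition is impossible because it would give $\beta\alpha\beta^{-1} = \begin{pmatrix} e^{-t} & 0 \\ 0 & e^{t}\end{pmatrix} \neq \alpha$, contradicting commutativity, so $\beta$ fixes both points, is diagonal, and $Fix(\beta) = Fix(\alpha)$.

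For the reverse implication of (1) I would normalize the common fixed set to one of the standard models $\{0,\infty\}$, $\{\infty\}$, or $\{i\}$. In each case both $\alpha$ and $\beta$ lie in the associated stabilizer --- the diagonal subgroup, the unipotent subgroup, or the rotation group $PSO(2)$ --- and each of these is abelian, so $\alpha$ and $\beta$ commute. For (2), the trichotomy provides a unique $X \in \mathfrak{sl}_2(\R)$ up to scaling with $\alpha \in \{\exp(sX) : s \in \R\}$, namely the diagonal, nilpotent, or rotation generator attached to $Fix(\alpha)$. This one-parameter group is abelian and contains $\alpha$, so it is contained in $C(\alpha)$; conversely, by part (1) any non-trivial $\beta \in C(\alpha)$ has $Fix(\beta) = Fix(\alpha)$, and the computations above show every such $\beta$ lies in $\{\exp(sX)\}$, while the identity lies there trivially. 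Hence $C(\alpha) = \{\exp(sX) : s \in \R\}$, which is isomorphic to $\R$ in the hyperbolic and parabolic cases and to $PSO(2) \cong S^1$ in the elliptic case.

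The step I expect to be the main obstacle is the forward direction of (1) in the parabolic and hyperbolic cases: one must rule out a hyperbolic $\beta$ that shares only one fixed point with a parabolic $\alpha$, and rule out a $\beta$ that swaps the two boundary fixed points of a hyperbolic $\alpha$. Both are handled by the short explicit matrix computations in the normalized Borel and diagonal subgroups indicated above; by contrast the elliptic case and the whole reverse implication are immediate, since the relevant stabilizers are already abelian.
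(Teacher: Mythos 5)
Your proof is correct and follows essentially the same route as the paper's: commuting elements preserve each other's fixed-point sets, the elliptic/parabolic/hyperbolic trichotomy reduces everything to the standard configurations $\{i\}$, $\{\infty\}$, $\{0,\infty\}$, and the associated one-parameter groups (rotation, unipotent, diagonal) are abelian. If anything, your case analysis is slightly more careful than the paper's, since your explicit matrix computations in the parabolic and hyperbolic normalizations also rule out the mixed-type situations (e.g.\ a hyperbolic $\beta$ commuting with a parabolic $\alpha$) that the paper's argument treats only implicitly.
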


\begin{proof}
Suppose $\alpha \beta = \beta \alpha$, then $\beta(Fix(\alpha)) = Fix(\alpha)$ and $\alpha(Fix(\beta)) = Fix(\beta)$. If $\alpha$ is parabolic or elliptic, then it has only one fixed point and thus $Fix(\alpha) = Fix(\beta)$ and the same applies for $\beta$ either parabolic or elliptic. In the case where both $\alpha$ and $\beta$ are hyperbolic, we observe that $\beta$ cannot interchange two distinct elements of the boundary $S^1$, thus the property $\beta(Fix(\alpha)) = Fix(\alpha)$ implies $Fix(\alpha) = Fix(\beta)$. On the other hand, if $\alpha$ and $\beta$ have the same set of fixed points, then they are elements of the same one-parameter group, this is obvious when the fixed points are in standard configuration, that is $\{0,\infty\}$, $\{\infty\}$ or $\{i\}$ according if the element is hyperbolic, parabolic or elliptic; and in general it can be seen via a conjugation of matrices by sending the fixed points to the standard configuration. In particular $\alpha \beta = \beta \alpha$, because a one-parameter group is commutative and the result follows.
\end{proof}
\subsection{Discrete  subgroups of Isometries of  $SL_{2}(\mathbb{R})$.}
\begin{corollary}\label{cor:finite-index-commutative-isometries}
If $\Gamma \subset PSL_2(\R)$ is a subgroup such that it has the identity element as an accumulation point (equivalently $\Gamma$ is not a discrete subgroup) and $\Lambda \subset \Gamma$ is a non-trivial, normal and discrete subgroup, then there exists $\Gamma_1 \subset \Gamma$ commutative subgroup of finite index.
\end{corollary}

\begin{proof}
\textbf{$\Lambda$ is cyclic.} As $\Lambda$ is normal, for every $\gamma \in \Gamma$, the conjugation induces an automorphism
	\[	\Lambda \rightarrow \Lambda, \qquad g \mapsto \gamma g \gamma^{-1},	\]
and as $\Lambda$ is discrete and $\Gamma$ has the identity element as an accumulation point, for every $F \subset \Lambda$ finite set, there exist $\gamma \in \Gamma$ close enough to the identity such that $\gamma \neq e$ and $\gamma g = g \gamma$, for every $g \in F$. By the Lemma \ref{lema:isometries-commutativity-fixedpoints}, the group generated by $F$ is a discrete subgroup of the one-parameter group $C(\gamma)$ and thus it is a cyclic group. For $F_1 \subset F_2 \subset \Lambda$ any two distinct finite subsets, there are elements $g_j \in \Lambda$ such that $\langle g_j \rangle = \langle F_j \rangle$ and $\langle F_1 \rangle \subset \langle F_2 \rangle$ which implies that $g_1 = g_2^k$ for some $k$ and in particular $0 < |g_2| < |g_1|$. Now $\Lambda$ must be cyclic because otherwise we would have a sequence $\{g_j\} \subset \Lambda$ obtained as the generators of subgroups generated by an increasing tower of finite subsets of $\Lambda$ that converge to the identity.

\textbf{Existence of $\Gamma_1$.} Take $\alpha \subset \Lambda$ a generator of the group and as $\gamma \alpha \gamma^{-1}$ is again a generator of $\Lambda$, for every $\gamma \in \Gamma$, then the subgroup
	\[	\Gamma_1 = \{ \gamma \in \Gamma : \gamma \alpha \gamma^{-1} = \alpha \}	\]
is a finite index subgroup of $\Gamma$ ($[\Gamma : \Gamma_1] \leq 2$ if $\Lambda \cong \Z$, and $[\Gamma : \Gamma_1] \leq |\Lambda|$ if $\Lambda \cong \Z / m \Z$). Finally, by the Lemma \ref{lema:isometries-commutativity-fixedpoints}, $\Gamma_1$ is commutative and the result follows.
\end{proof}
\subsection{Non-classification of  finite  groups  of  isometries}
\begin{remark}
It  is  proved  in \cite{Greenberg} that  every finite  group  can  be  realized  as  the isometry  group  of a compact hyperbolic  surface. 
\end{remark}

\section{$\H^2 \times \R$}
Recall  \cite{kobayashinomizu}, Chapter  VI, Theorem  3.5 that  given  a   product  of riemannian   manifolds $M\times N$  with  $M$ of  constant  sectional  curvature  and  $N$ flat,  the   isometry  group  of $M\times  N$   decomposes  as  a  direct  product, $ {\rm Iso }(M)\times {\rm Iso} (N).$ The following  result gives us the isometry groups of finite volume quotients of $\H^2 \times \R$ (see Theorem \ref{teo:hyperbolic-fibration-isometry-group} for another proof):

\subsection{Isometry  groups  of  finite  volume. }
\begin{theorem}\label{teo:isometry-group-product}
If $G \subset Iso(\H^2 \times \R)$ is a discrete subgroup such that $(\H^2 \times \R) / G$ has finite volume, then the group $Iso((\H^2 \times \R) / G)$ is a finite extension of $S^1$
\end{theorem}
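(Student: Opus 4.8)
The plan is to exploit the product decomposition recalled above, $Iso(\H^2\times\R)\cong Iso(\H^2)\times Iso(\R)$ with $Iso(\R)\cong\R\rtimes\Z_2$, and to reduce the computation of $Iso((\H^2\times\R)/G)=N_{Iso(\H^2\times\R)}(G)/G$ to the two factors. Let $p$ and $q$ denote the projections of $Iso(\H^2\times\R)$ onto $Iso(\H^2)$ and $Iso(\R)$. Replacing $G$ by the subgroup $G^+=G\cap\bigl(Iso^+(\H^2)\times\R\bigr)$ of index at most $4$ (which changes neither discreteness, nor finiteness of covolume, nor the identity component of the isometry group of the quotient), we may assume $G\subset PSL_2(\R)\times\R$. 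Set
\[
N:=\ker(p|_G)=G\cap\bigl(\{e\}\times\R\bigr)\subset\R .
\]
Since the two direct factors commute and $\R$ is abelian, a direct computation gives $g\,n\,g^{-1}=n$ for all $g\in G$, $n\in N$, so \emph{$N$ is central in $G$}; being discrete in $\R$ it is either trivial or infinite cyclic.

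First I would pin down the fiber direction. The translation subgroup $\{e\}\times\R$ is generated by the parallel unit Killing field $\partial_t$ tangent to the $\R$-factor, and it normalizes $G$ (this is where the reduction to $G^+$ is used, so that no reflection of the fiber occurs); hence it descends to an isometric $\R$-action on $X/G$ whose ineffective kernel is exactly $N$. If $N$ were trivial this would be an effective isometric flow generated by a \emph{parallel unit} field; because the field has constant norm $1$ and is parallel, a uniform tubular neighborhood of any non-closed orbit has infinite volume, contradicting $\mathrm{Vol}(X/G)<\infty$. Thus all orbits are closed, the flow factors through $S^1$, and $N\cong\Z$. Write $N=\{e\}\times\mu\Z$ and $\Gamma:=p(G)\subset PSL_2(\R)$.

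Next comes the crucial point, \emph{discreteness of the base projection $\Gamma$}. Passing to the quotient by the central $N$ identifies $G/N$ with a discrete subgroup of
\[
\bigl(PSL_2(\R)\times\R\bigr)/N\;\cong\;PSL_2(\R)\times(\R/\mu\Z)\;\cong\;PSL_2(\R)\times S^1 .
\]
The projection $PSL_2(\R)\times S^1\to PSL_2(\R)$ has \emph{compact} kernel $S^1$ and is therefore proper, and the image of a discrete subgroup under a proper homomorphism is discrete, so $\Gamma$ is discrete in $PSL_2(\R)$. (Alternatively one may invoke the stated Dichotomy, or Corollary \ref{cor:finite-index-commutative-isometries} after ruling out a non-discrete $\Gamma$.) Now $X/G$ is a Seifert-type circle bundle over the orbifold $\H^2/\Gamma$, whose volume is $\mu\cdot\mathrm{Area}(\H^2/\Gamma)$ up to a constant; finiteness of the volume forces $\H^2/\Gamma$ to have finite area, i.e. $\Gamma$ is a finite-covolume Fuchsian lattice.

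Finally I would compute the isometry group through Proposition \ref{prop:centralizer-normalizer}, which reduces the identity component to the centralizer $Z:=Z_{Iso(\H^2\times\R)}(G)$. As the two factors commute, $Z=Z_{Iso(\H^2)}(\Gamma)\times Z_{Iso(\R)}(q(G))$. On the hyperbolic factor $\Gamma$ is a lattice, hence Zariski dense by Borel density, so by the argument of Lemma \ref{lema:hyperbolic-isometry-group} its centralizer is finite; on the $\R$-factor $q(G)\supset\mu\Z$ is a nonzero translation group whose centralizer is the whole translation line $\R$. Thus the identity component of $Z$ is $\{e\}\times\R$, and Proposition \ref{prop:centralizer-normalizer} identifies the identity component of $Iso(X/G)$ with
\[
(\{e\}\times\R)/N\;\cong\;\R/\mu\Z\;\cong\;S^1 .
\]
That the full group has only finitely many components follows from the finiteness of $N_{Iso(\H^2)}(\Gamma)/\Gamma$ (Lemma \ref{lema:hyperbolic-isometry-group}) together with the finite reflection and covolume data in the $\R$-factor, whence $Iso((\H^2\times\R)/G)$ is a finite extension of $S^1$. \emph{The main obstacle} is the discreteness step: the clean route is the compact-kernel properness argument, which itself rests on first forcing $N\cong\Z$ via the parallel-field volume estimate. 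The one subtlety worth recording is that a reflection of the $\R$-fiber present in the original $G$ collapses the circle, so that in that case the identity component degenerates to a point, consistent with the ``closed subgroup of $S^1$'' statement of Theorem \ref{zimmer:3man}.
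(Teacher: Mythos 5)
Your skeleton (product decomposition, central fiber subgroup $N$, properness of the projection modulo a compact kernel, centralizer computation via Proposition \ref{prop:centralizer-normalizer} and Borel density) is sound \emph{once one knows} that $N\cong\Z$ and that $\Gamma=p(G)$ is a lattice. The genuine gap is exactly the step where you force $N\cong\Z$: the claim that a parallel unit Killing field whose flow has a non-closed orbit produces an infinite-volume uniform tubular neighborhood is false. Non-closed orbits of a flow are precisely the recurrent ones, and recurrence is what destroys the tube estimate: the orbit returns $\varepsilon$-close to itself infinitely often, so the $\varepsilon$-tube wraps onto itself and you cannot sum disjoint ball volumes. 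The irrational linear flow on a flat $3$-torus is an effective isometric flow generated by a parallel unit field on a \emph{finite-volume} manifold all of whose orbits are non-closed. The same phenomenon occurs inside this very geometry: take $G=\langle(\rho,1)\rangle\subset PSL_2(\R)\times\R$ with $\rho$ elliptic of infinite order. Then $G$ is discrete, $N$ is trivial, and the flow orbits through points near the fixed point of $\rho$ are non-closed but have \emph{compact} closure (they lie on flat $2$-tori around the one closed circle orbit), so their uniform tubular neighborhoods have finite volume; the quotient does have infinite volume, but the infinitude is invisible in any neighborhood of those orbits, so your local estimate cannot detect it. (The subsequent deduction ``all orbits closed $\Rightarrow$ the flow factors through $S^1$'' is also unjustified: with $N$ trivial and $\Gamma$ discrete and torsion-free, all orbits are closed properly embedded lines and nothing factors through $S^1$.)

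What your argument silently skips is therefore the case that constitutes the actual difficulty: a non-discrete projection $\Gamma\subset PSL_2(\R)$, which is exactly the situation producing recurrent orbits. The paper's proof spends its second half there: by Corollary \ref{cor:finite-index-commutative-isometries}, a non-discrete $\Gamma$ arising from a discrete $G$ is virtually abelian and fixes a point, a geodesic or a horocycle, so $G$ preserves a surface $\zeta\times\R$, and an explicit fundamental-domain computation then shows $(\H^2\times\R)/G$ has infinite volume. Your parenthetical ``alternatively one may invoke \dots Corollary \ref{cor:finite-index-commutative-isometries} after ruling out a non-discrete $\Gamma$'' has the logic reversed: that Corollary is the tool by which one \emph{rules out} a non-discrete $\Gamma$ under the finite-volume hypothesis, not something applied afterwards. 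Once this case is restored, the remainder of your proof (discreteness of $\Gamma$ via the compact-kernel projection, the lattice property, identity component $\R/N\cong S^1$ from the centralizer, finitely many components from Lemma \ref{lema:hyperbolic-isometry-group}) goes through; note also that your closing observation about fiber reflections collapsing the circle quietly contradicts your earlier claim that passing to $G^+$ ``changes \dots\ the identity component'' in no way, which is again only an issue in that degenerate case.
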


\begin{proof}
Consider the exact sequence
	\[	1 \rightarrow K \rightarrow G \rightarrow \Gamma \rightarrow 1,	\]
	where $K = G \cap Iso(\R)$ is a discrete subgroup of $G$ and $\Gamma \cong G / K$ is a subgroup of isometries of $\H^2$. If $\Gamma$ is discrete as a subgroup of $Iso(\H^2)$, then $\H^2 / \Gamma$ is an hyperbolic orbifold such that 
	\[	\R / K \rightarrow (\H^2 \times \R) / G \rightarrow \H^2 / \Gamma \]
is a locally trivial fiber bundle and as $(\H^2 \times \R) / G$ has finite volume, then $\R/K \cong S^1$ and $\Gamma$ is a Lattice subgroup of $Iso(\H^2)$. In this case, we have an exact sequence of isometry groups
	\[	1 \rightarrow Iso(S^1) \rightarrow Iso(\H^2 \times \R/G) \rightarrow Iso(\H^2 / \Gamma) \rightarrow 1,	\]
where $Iso(\H^2 / \Gamma)$ is a finite group by Lemma \ref{lema:hyperbolic-isometry-group} and thus $Iso(\H^2 \times \R/G)$ is a finite extension of $S^1$.

If $\Gamma$ is not discrete as a subgroup of $Iso(\H^2)$, we can see that the quotient $(\H^2 \times \R) / G$ cannot have finite volume. To see this, first observe that we have another exact sequence
	\[	1 \rightarrow \Lambda \rightarrow G \rightarrow L \rightarrow 1,	\]
where $\Lambda = G \cap Iso(\H^2) \subset \Gamma$ is a discrete, normal subgroup and $G/\Lambda \cong L \subset Iso(\R)$. If $\Lambda = 0$, then $G \cong L$ is commutative and thus $\Gamma$ is commutative. If instead $\Lambda$ is non-trivial, then Corollary \ref{cor:finite-index-commutative-isometries} tells us again that $\Gamma$ is commutative (perhaps after passing to a finite index subgroup). In any case, $G$ leaves a closed surface $\zeta \times \R \subset \H^2 \times \R$ fixed, where $\zeta$ is a geodesic, an horocycle or a circle (corresponding to the type of the isometries of $\Gamma$). If $\Gamma$ consists of parabolic or hyperbolic elements, then $\Gamma$ acts discretely by Euclidean automorphisms in  $\zeta \times \R \cong \R^2$ so that by Bieberbach Theorem \cite{Rat}, $\Gamma$ contains a finite index subgroup isomorphic to a subgroup of $\Z^2$ and in particular the fundamental domain of the $G$-action in $\H^2 \times \R$ contains a subset isometric to
	\[	\{ x + iy : a < x < b \} \times [c,d] \subset \H^2 \times \R,	\]
this implies that $(\H^2 \times \R) /G$ doesn't have finite volume. If $\Gamma$ consists of elliptic elements, then $G$ acts discretely by Euclidean automorphisms in $\zeta \times \R \cong S^1 \times \R$, and thus as in the previous case, the $G$-action has a fundamental domain containing an open subset isomorphic to
	\[	\{ (s e^{i \theta}, r) \in \mathbb{D} \times \R : a < \theta < b, \ c < r < d \},	\]
where $\mathbb{D} \cong \H^2$ is the Poincar\'e disc model of the hyperbolic plane, and again $(\H^2 \times \R) /G$ doesn't have finite volume.
\end{proof}

\section{$\widetilde{SL_2}$ Geometry}\label{section:SL}

\subsection{Riemannian Geometry of $PSL_2(\R)$}
Riemannian structure of
Recall that given a Riemannian manifolds $(M,g)$, there is a natural construction of a Riemannian metric tensor on the tangent bundle $TM$ constructed as follows: if $(p,x) \in T M$, and $(c(t),v(t)) \in T M$ is a smooth curve such that $c(0) = p$ and $v(0) = x$, then
	\[	\| (c'(0), v'(0)) \|_{(p,x)}^2 = 	\| d \pi_{(p,x)} ((c'(0), v'(0))) \|_p^2 + \left\|\frac{D}{dt}_{|t=0} v(t) \right\|^2_p,	\]
where $\pi : TM \rightarrow M$ is the projection, $\frac{D}{dt} v(t)$ is the covariant derivative along the curve $c(t)$ and $g(u,u)_p = \|u\|_p^2$. If $X = c'(0)$ and $Z = v'(0)$, in local coordinates we have the formula
	\[	\| (X,Z) \|_{(p,x)}^2 = 	\| X \|_p^2 + \left\| Z + X^jv^i \Gamma_{ij}^k \partial_k \right\|^2_p.	\]
The vector $(X,Z)$ is called horizontal if $c(t)$ is constant, and thus $X=0$, it is called vertical if it is orthogonal to every horizontal vector in which case $Z = - X^jv^i \Gamma_{ij}^k \partial_k$. So, we have a decomposition in horizontal and vertical components as
	\[	(X,Z) = (0, Z + X^jv^i \Gamma_{ij}^k \partial_k ) + (X, -X^jv^i \Gamma_{ij}^k \partial_k ).	\]
If we take the global coordinates $(x,y) \mapsto x+ iy$ of the hyperbolic plane
	\[	\H^2 = \{ z \in \C : Im(z) > 0 \},	\]
with corresponding metric tensor $ds^2 = \frac{dx^2 + dy^2}{y^2}$, then the Christoffel symbols at a point $x + i y$ are given by
	\[	-\Gamma_{11}^2 = \Gamma_{22}^2 = \Gamma_{12}^1 = \Gamma_{21}^1 = - 1/y.	\]
There is a natural identification of the tangent bundle
	\[	\H^2 \times \C \cong \textrm{T} \H^2, \qquad (z,w) \mapsto \frac{d}{dt}_{|t=0}(z + tw)	\]
and so the projection $\pi : T \H^2 \rightarrow \H^2$ is just given by the projection in the first factor and we have global coordinates in each tangent plane $\partial_1 = 1$ and $\partial_2 = i$. If as before, $(X,Z)$ is a tangent vector to $T \H^2$ at the point $(p,v) = (i,1)$, then the orthogonal decomposition in horizontal and vertical components is given by
	\[	(X,Z) = (0, Z - X^2 + X^1 i) + (X, X^2 - X^1 i).	\]
The isometric action by M\"obius transformations of $SL_2(\R)$ in $\H^2$, induces the action in the tangent bundle 
	\[	SL_2(\R) \times T \H^2 \rightarrow T \H^2, \quad 
		\left(\begin{array}{cc} a & b \\ c & d \end{array}\right) \cdot (z,w) = \left( \frac{az + b}{cz + d}, \frac{w}{(cz + d)^2} \right).	\]
This action is transitive in the unitary tangent bundle $T^1 \H^2 = \{(z,w) \in \H^2 : \|w\|_z = 1 \}$, so the orbit of the point $(i,1) \in T^1 \H^2$ induces the diffeomorphism $\phi : PSL_2(\R) \rightarrow T^1 \H^2$ given explicitly by the formula
	\[	\phi \left(\begin{array}{cc} a & b \\ c & d \end{array}\right) = \left( \frac{a i + b}{c i + d}, \frac{1}{(c i + d)^2} \right). \]
As this action is also isometric with respect to the previously defined metric, it will define a left invariant metric in $PSL_2(\R)$ that corresponds to an inner product in its tangent vector to the identity, naturally identified with the Lie algebra
	\[	\mathfrak{sl}_2(\R) = \{ A \in M_2(\R) : \textrm{tr}(A) = 0 \}.	\]
More precisely, if we consider the derivative $d \phi$, we get the identification 
	\[	\Psi : \mathfrak{sl}_2(\R) \rightarrow T_{(i,1)} (T \H^2), \qquad \Psi(X) = \frac{d}{dt}_{|t=0} \phi(exp(tX)).	\]
A basis of $\mathfrak{sl}_2(\R)$ is given by
	\[	X_1 = \left(\begin{array}{cc} 1 & 0 \\ 0 & -1 \end{array}\right), \quad
		X_2 = \left(\begin{array}{cc} 0 & 1 \\ -1 & 0 \end{array}\right), \quad X_3 = \left(\begin{array}{cc} 0 & 1 \\ 1 & 0 \end{array}\right). \]
If $g_{t,j} = exp(t X_j)$, then $\phi(g_{t,1}) = (e^{2t} i, e^{2t})$, $\phi(g_{t,2}) = (i , e^{2 i t})$ and
	\[	\phi(g_{t,3}) = \left( \frac{ch(t) i + sh(t)}{ch(t) + i sh(t)}, \frac{1}{(ch(t) + i sh(t))^2} \right), \]
where $ch(t)$ and $sh(t)$ denote the hyperbolic cosine and the hyperbolic sine correspondingly. If $\widehat{X}_j = \Psi(X_j)$, we have
	\[	\widehat{X}_1 = (2i,2), \quad \widehat{X}_2 = (0,2i), \quad \widehat{X}_3 = (2,-2i), 	\]
where we immediatly see that $\widehat{X}_2$ is vertical and a direct computation tells us that $\widehat{X}_1$ and $\widehat{X}_3$ are horizontal and orthogonal. Thus $\{\frac{1}{2} X_1, \frac{1}{2} X_2, \frac{1}{2} X_3 \}$ is an orthonormal basis in the corresonding inner product in $\mathfrak{sl}_2(\R)$.

As the $PSL_2(\R)$-action is given by holomorphic maps, it commutes with the action of $S^1$ given by rotations in each tangent plane
	\[	S^1 \times T^1 \H^2 \rightarrow	T^1 \H^2, \qquad \eta \cdot (z,w) = (z,\eta w),	\]
as well as with the map $(z,w) \mapsto (\overline{z},\overline{w})$. It is immediate that the previous maps act by isometries and in fact generate the whole isometry group. Thus, the isometry group $Iso(PSL_2(\R))$ is isomorphic to $PSL_2(\R) \times (S^1 \rtimes \Z_2)$, see \cite{Scott}.
\subsection{Groups  of  isometries  of  finite  volume}
\begin{theorem}\label{teo:psl2-orbifolds-isometries}
If $\Gamma \subset Iso(PSL_2(\R))$ is a discrete group such that $PSL_2(\R) / \Gamma$ has finite volume, then
	\[ Iso(PSL_2(\R) / \Gamma) \cong S^1 \rtimes F,	\]
where $F$ is a finite group.
\end{theorem}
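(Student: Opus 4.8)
The plan is to exploit the circle-bundle structure $S^1 \to PSL_2(\R) = T^1\H^2 \to \H^2$ together with the decomposition $Iso(PSL_2(\R)) \cong PSL_2(\R) \times (S^1 \rtimes \Z_2)$ established just above, in which the $S^1$-factor rotates the fibres (and so is trivial on the base) while the $PSL_2(\R)$-factor and the $\Z_2$ cover, respectively, the orientation-preserving and orientation-reversing isometries of $\H^2$. Concretely I would work with the exact sequence
\[ 1 \rightarrow S^1 \rightarrow Iso(PSL_2(\R)) \xrightarrow{\ \pi\ } Iso(\H^2) \rightarrow 1 \]
and first push $\Gamma$ forward to $\overline{\Gamma} = \pi(\Gamma) \subset Iso(\H^2)$. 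Since the kernel $S^1$ is \emph{compact}, the $\pi$-preimage of any relatively compact set is relatively compact, so discreteness of $\Gamma$ forces discreteness of $\overline{\Gamma}$; this is the $PSL_2$-analogue of the discrete-projection arguments in Proposition \ref{prop:nil-discrete-proj} and Lemma \ref{lema:discrete_subgroups_S2_R}.

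Next I would bring in the finite-volume hypothesis. The induced map $PSL_2(\R)/\Gamma \to \H^2/\overline{\Gamma}$ is a Seifert fibration whose generic fibre is the compact circle $S^1/K$, where $K = \Gamma \cap S^1$ is a discrete, hence finite cyclic, subgroup of $S^1$. Because this fibre is compact, $PSL_2(\R)/\Gamma$ has finite volume if and only if $\H^2/\overline{\Gamma}$ does; thus $\overline{\Gamma}$ is a lattice in $Iso(\H^2)$, i.e.\ the group of a finite-volume hyperbolic $2$-orbifold. I would then use the identification $Iso(PSL_2(\R)/\Gamma) \cong N/\Gamma$ with $N = N_{Iso(PSL_2(\R))}(\Gamma)$. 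By Proposition \ref{prop:centralizer-normalizer} the identity component of $N/\Gamma$ is the image of the identity component of the centralizer $Z = Z_{Iso(PSL_2(\R))}(\Gamma)$. Any connected centralizing subgroup lies in the identity component $PSL_2(\R)\times S^1$, and its $PSL_2(\R)$-component must centralize the orientation-preserving part $\overline{\Gamma}^{+} = \overline{\Gamma}\cap PSL_2(\R)$, a finite-index lattice in $PSL_2(\R)$ which is Zariski dense by Borel density (exactly as exploited in Lemma \ref{lema:hyperbolic-isometry-group}); this forces that component into the trivial centre of $PSL_2(\R)$. Hence the identity component of $Iso(PSL_2(\R)/\Gamma)$ is a closed subgroup of the fibre $S^1/K \cong S^1$.

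For the component group I would project $N$ through $\pi$, landing inside $N_{Iso(\H^2)}(\overline{\Gamma})/\overline{\Gamma} \cong Iso(\H^2/\overline{\Gamma})$, which is finite by Lemma \ref{lema:hyperbolic-isometry-group}; this bounds the number of components by a finite group $F$. The inversion action of $\Z_2 \subset S^1\rtimes\Z_2$ on the fibre then supplies the semidirect structure, giving the asserted form $S^1 \rtimes F$, with the identity component being the full circle in the orientation-preserving case; this is consistent with, and refines, the ``closed subgroup of $S^1$'' conclusion recorded in Theorem \ref{zimmer:3man}.

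The main obstacle is precisely this interplay between the fibre and the reflecting $\Z_2$, i.e.\ deciding \emph{how much} of the circle survives in the normalizer. A fibre rotation $R_\sigma$ centralizes every orientation-preserving element of $\Gamma$, but because the $\Z_2$ inverts the fibre it conjugates an orientation-reversing $\gamma \in \Gamma$ to $R_{\sigma^2}\,\gamma$; hence $R_\sigma \in N$ exactly when $\sigma^2 \in K$. When $\overline{\Gamma}$ is orientation-preserving the entire circle therefore descends and the identity component is all of $S^1$, so $Iso(PSL_2(\R)/\Gamma)\cong S^1\rtimes F$ on the nose. When $\overline{\Gamma}$ contains orientation-reversing elements one must track this inversion carefully together with the finite cyclic kernel $K$ to identify the surviving closed subgroup of $S^1$ and to pin down $F$ as a subgroup of $Iso(\H^2/\overline{\Gamma})$; the commutativity and finiteness facts already proved in Corollary \ref{cor:finite-index-commutative-isometries} and Lemma \ref{lema:hyperbolic-isometry-group} are exactly what is needed to close this bookkeeping, while every remaining step is a two-dimensional hyperbolic statement established earlier.
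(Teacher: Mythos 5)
Your route is essentially the paper's own: project $\Gamma$ to the hyperbolic base using compactness of the fibre kernel, deduce from finite volume that $\overline{\Gamma}$ is a lattice, and then control the identity component via Proposition \ref{prop:centralizer-normalizer} plus Borel density and the component group via Lemma \ref{lema:hyperbolic-isometry-group} (the paper projects onto the $PSL_2(\R)$-factor of its product decomposition rather than onto $Iso(\H^2)$, a cosmetic difference). In the orientation-preserving case, i.e.\ $\Gamma \subset PSL_2(\R) \times S^1$, your argument is complete. The genuine gap is the case you defer as ``bookkeeping'', and it cannot be closed, because your own formula refutes the stated conclusion there: as you observe, a fibre rotation $R_\sigma$ conjugates an orientation-reversing $\gamma \in \Gamma$ into $R_{\sigma^2}\gamma$, so $R_\sigma$ lies in the normalizer $N = N_{Iso(PSL_2(\R))}(\Gamma)$ only if $\sigma^2 \in K$; since $K = \Gamma \cap S^1$ is finite, $N \cap S^1$ is then finite, and combined with your Borel-density step (centralizing elements have trivial $PSL_2(\R)$-part) this forces the identity component of $Iso(PSL_2(\R)/\Gamma)$ to be trivial. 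The isometry group is then finite, and a finite group is never of the form $S^1 \rtimes F$. A concrete instance: take $\Gamma$ generated by $PSL_2(\Z)$ and the lift to $T^1\H^2$ of the reflection $r(z) = -\overline{z}$; this is discrete of finite covolume (conjugation by $r$ preserves $PSL_2(\Z)$), yet only the rotations with $\sigma^2 = 1$ normalize it. No appeal to Corollary \ref{cor:finite-index-commutative-isometries} or Lemma \ref{lema:hyperbolic-isometry-group} can repair this; what your argument genuinely proves is that $Iso(PSL_2(\R)/\Gamma)$ is a finite extension of a closed subgroup of $S^1$, the full circle occurring exactly when $\overline{\Gamma}$ preserves orientation --- which is the weaker form actually needed for Theorem \ref{zimmer:3man}.

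You should also know that this defect is inherited from the paper rather than introduced by you. The decomposition you quote, $Iso(PSL_2(\R)) \cong PSL_2(\R) \times (S^1 \rtimes \Z_2)$, is not a direct product: no lift of an orientation-reversing isometry of $\H^2$ commutes with all of $PSL_2(\R)$, since its conjugation action is the outer automorphism $A \mapsto JAJ$ with $J = \mathrm{diag}(1,-1)$ (and the map $(z,w) \mapsto (\overline{z},\overline{w})$ invoked in the paper does not even preserve the upper half-plane). The correct structure is $(PSL_2(\R) \times S^1) \rtimes \Z_2$, with $\Z_2$ inverting the fibre circle --- exactly the inversion you detected. The paper's proof relies on the false direct-product form at the decisive point where it asserts $S^1 \subset N_{Iso(PSL_2(\R))}(\Gamma)/\Gamma$ unconditionally, so it passes silently over the same case on which your attempt, more honestly, gets stuck. (A smaller looseness shared by both: even in the orientation-preserving case, the splitting $S^1 \rtimes F$ requires the remark that a subgroup of $F \ltimes S^1$ containing the full circle is a union of $S^1$-cosets and hence contains the canonical section.)
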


\begin{proof}
Consider the projection into the simple factor
	\[	P : Iso(PSL_2(\R)) \rightarrow PSL_2(\R),	\]
as the Kernel of $P$ is compact and $\Gamma$ is a discrete subgroup, then $\Gamma_0 = P(\Gamma)$ is a discrete subgroup of $PSL_2(\R)$ and $\Gamma_0 \cong \Gamma / F_0$, with $F_0 = Ker(P) \cap \Gamma$ a finite subgroup of $S^1 \rtimes \Z_2$. Observe that $\pi : PSL_2(\R)	\rightarrow \H^2$ is a fiber bundle with fiber $S^1$ such that $\pi(\gamma x) = P(\gamma) \pi(x)$, so we have an induced projection
	\[	\pi : PSL_2(\R) / \Gamma \rightarrow \H^2 / \Gamma_0 ,	\]
which implies that $\H^2 / \Gamma_0 $ has finite hyperbolic area. As we also have the identification $\pi:  PSL_2(\R) / \Gamma_0  \rightarrow \H^2/\Gamma_0$, we have that $\Gamma_0$ is a Lattice in $PSL_2(\R)$. By Lemma \ref{lema:hyperbolic-isometry-group}, we have that $\Gamma_0$ has finite index in $\Lambda = N_{PSL_2(\R)}(\Gamma_0)$. Observe that if $F_1 = \Lambda / \Gamma_0$, then we have that $\Gamma \subset \Lambda \times S^1 \rtimes \Z_2$ and a bijection of sets
	\[	(\Lambda \times S^1 \rtimes \Z_2) / \Gamma \cong \frac{(\Lambda \times S^1 \rtimes \Z_2) / F_0 }{\Gamma / F_0 } \hookrightarrow (\Lambda \times S^1 \rtimes \Z_2) / \Gamma_0  \cong F \ltimes S^1,  \]
where $F$ is either $F_1$, or $F_1 \times \Z_2$, deppending on whether $\Gamma$ contains the map $(z,w) \mapsto (\overline{z},\overline{w})$ or not. Thus, we have that
	\[	S^1 \subset N_{Iso(PSL_2(\R))}(\Gamma) / \Gamma \subset (\Lambda \times S^1 \rtimes \Z_2)/\Gamma \cong F \ltimes S^1,	\]
and the result follows.
\end{proof}
\subsection{Non-classification of  finite group  actions. }
\begin{remark}
As seen in Remark \ref{remark:realization_hyperbolic_finite_isom}, we can obtain every finite group as an isometry group of an hyperbolic surface, so that, the finite factor of the isometry group in Theorem \ref{teo:psl2-orbifolds-isometries}, can be any finite group.
\end{remark}

\subsection{Isometries of the universal cover $\widetilde{SL_2}(\R)$.}

The Lie group $PSL_2(R)$ is topologically the product $S^1 \times \IR^2$, so that there is a simply connected Lie group denoted by $\widetilde{SL_2}(\R)$ which is the topological universal cover of $PSL_2(\R)$ and algebraically it is a  non-split central extension by a cyclic group $\Z$, more precisely, there is an exact sequence
	\[	1 \rightarrow \Z \rightarrow \widetilde{SL_2}(\R) \rightarrow PSL_2(\R) \rightarrow 1,	\]
where $\Z \subset \widetilde{SL_2}(\R)$ lies in the center. We can pull-back the metric tensor of $PSL_2(\R)$, constructed in the previous section, to $\widetilde{SL_2}(\R)$ to obtain the model of the homogeneous 3-dimensional geometry denoted by $SL_2$.

\begin{remark}
The isometry group of $\widetilde{SL_2}(\R)$ can be characterized in three different ways. First, we have the homomorphism
	\[	\widetilde{SL}_2(\R) \times \R \rightarrow Iso(\widetilde{SL}_2(\R))	\]
given by left and right multiplications, here $\R \cong \widetilde{SO}(2)$ is the universal cover of the rotation group $SO(2) \subset SL_2(\R)$, with Kernel $\Z = \R \cap \widetilde{SL}_2(\R)$ being precisely the center of $\widetilde{SL_2}(\R)$. The group $Iso(\widetilde{SL}_2(\R))$ has two connected components and 
	\[	Iso(\widetilde{SL}_2(\R))_0 \cong \big( \widetilde{SL}_2(\R) \times \R \big) / \Z	\]
is the component of the identity. In fact, we have an epimorphism
	\[	Iso(\widetilde{SL}_2(\R)) \rightarrow Iso(PSL_2(\R)) \cong PSL_2(\R) \times (S^1 \rtimes \Z_2),	\]
with kernel isomorphic to $\Z$, however, the group $Iso(\widetilde{SL}_2(\R))$ is no longer a product group. The left projection of the previous product gives us the second description in terms of a short exact sequence
	\[	1 \rightarrow \R \rightarrow Iso(\widetilde{SL}_2(\R))_0 \rightarrow PSL_2(\R) \rightarrow 1,	\]
and if we consider the groups $\widetilde{SL}_2(\R)$ and $\R$ as closed subgroups of $Iso(\widetilde{SL}_2(\R)$, then we have the third description
	\[	Iso(\widetilde{SL}_2(\R))_0 = L(\widetilde{SL}_2(\R)) R(\R),	\]
where $L(\cdot)$ and $R(\cdot)$ represent left and right multiplications in the group $\widetilde{SL}_2(\R)$.
\end{remark}

A discrete subgroup $\Gamma \subset Iso(PSL_2(\R))$ can be lifted to a discrete subgroup $\widetilde{\Gamma} \subset Iso(\widetilde{SL}_2(\R))$, so that $\widetilde{SL}_2(\R) / \widetilde{\Gamma} \cong PSL_2(\R) / \Gamma$ and thus, we can compute $Iso(\widetilde{SL}_2(\R) / \widetilde{\Gamma})$ with Theorem \ref{teo:psl2-orbifolds-isometries}, however, not every discrete group of $Iso(\widetilde{SL}_2(\R))$ can be obtained this way. In the next section we discuss the proof in the general setting for discrete groups of isometries in $\widetilde{SL}_2(\R)$.

The following Lemma is well known and holds for every Lie group, but we include a proof of the case we need for the sake of completeness.

\begin{lemma}\label{lema:stable-neighborhoods-commutator-1}
If $G$ is a Lie group locally isomorphic to $\R \times SL_2(\R)$, for example $G$ can be the isometry group of $\widetilde{SL}_2(\R)$ or $\H^2 \times \R$, then there exists a neighborhood of the identity $U \subset G$ such that $[U, U] \subset U$.
\end{lemma}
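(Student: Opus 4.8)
The plan is to work in exponential coordinates and exploit the fact that the group commutator vanishes to second order at the identity; the specific isomorphism type $\R \times SL_2(\R)$ plays no role, and the argument works verbatim for any Lie group. Fix a norm $\|\cdot\|$ on the Lie algebra $\g$ of $G$, and let $\exp : \g \to G$ be the exponential map, which is a diffeomorphism from a ball $B_{r_0} = \{X \in \g : \|X\| < r_0\}$ onto a neighborhood of $e$, with inverse $\log$. For $r \le r_0$ write $U_r = \exp(B_r)$; the candidate neighborhood will be $U = U_r$ for $r$ sufficiently small. Here $[U,U]$ denotes the set of group commutators $\{[u,v] = uvu^{-1}v^{-1} : u,v \in U\}$, not the subgroup they generate.

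The key estimate comes from the Baker--Campbell--Hausdorff formula. Shrinking $r_0$ if necessary so that the BCH series converges on $B_{r_0} \times B_{r_0}$ and the commutator $\exp(X)\exp(Y)\exp(-X)\exp(-Y)$ lands in $\exp(B_{r_0})$, one has
\[	\log\big( \exp(X)\exp(Y)\exp(-X)\exp(-Y) \big) = [X,Y] + R(X,Y),	\]
where $R$ collects the terms of order $\ge 3$, so that $\|R(X,Y)\| \le C_1(\|X\| + \|Y\|)^3$ for some constant $C_1$. Since the Lie bracket is bilinear, $\|[X,Y]\| \le C_2 \|X\|\,\|Y\|$ for a constant $C_2$ depending only on the chosen norm. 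Combining these, for $X, Y \in B_r$ with $r \le 1$ I obtain
\[	\big\| \log[\exp X, \exp Y] \big\| \le C_2 r^2 + 8 C_1 r^3 \le C r^2,	\]
with $C = C_2 + 8C_1$.

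It now suffices to choose $r$ small enough that $C r^2 < r$, i.e. $r < \min(r_0, 1, 1/C)$. For such $r$, every commutator $[\exp X, \exp Y]$ with $X, Y \in B_r$ is the exponential of a vector of norm $< r$, hence lies in $U_r$. Since each element of $U = U_r$ is of the form $\exp X$ with $\|X\| < r$, this gives $[U,U] \subset U$, as required.

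The only real content is the quadratic nature of the commutator; I expect no genuine obstacle beyond bookkeeping of constants. The heuristic that commutators are ``second order small'' can be made precise either by BCH as above or, equivalently, by observing that the smooth map $c(g,h) = [g,h]$ satisfies $c(g,e) = c(e,h) = e$, so both partial differentials, and hence the total differential, of $c$ vanish at $(e,e)$; Taylor's theorem then yields the same quadratic bound $d(e, c(g,h)) \le K\,(d(e,g)+d(e,h))^2$ with respect to a left-invariant metric $d$, and the identical choice of radius finishes the proof.
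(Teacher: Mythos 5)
Your proof is correct, and it takes a genuinely different route from the paper's. Both arguments rest on the same core fact---the group commutator vanishes to second order at the identity, so commutators of elements at distance $\le r$ from $e$ lie at distance $\le Cr^2 < r$ once $r < 1/C$---but you establish this abstractly, via the Baker--Campbell--Hausdorff expansion $\log[\exp X,\exp Y]=[X,Y]+R(X,Y)$ (or equivalently via the vanishing differential of $c(g,h)=[g,h]$ at $(e,e)$ plus Taylor's theorem), whereas the paper proceeds concretely: it first notes the claim is local and that the central $\R$-factor drops out of commutators, reducing to $SL_2(\R)$, and then writes out the four entries $t_1,\dots,t_4$ of the matrix commutator explicitly and bounds them by $C\varepsilon^2$, ending with the identical choice $C\varepsilon^2<\varepsilon$. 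Your version buys generality---it works verbatim for every Lie group, which the paper itself acknowledges is the true scope of the lemma (``well known and holds for every Lie group'')---and it replaces the bookkeeping of polynomial entries with a single structural estimate; the paper's version buys self-containedness, using nothing beyond matrix multiplication and avoiding any appeal to BCH convergence or exponential coordinates. One small point in your favor worth retaining: you state explicitly that $[U,U]$ means the set of commutators rather than the generated subgroup, which is indeed the reading the paper relies on later (in Proposition \ref{prop:hyperbolic-discrete-projections} only pairwise commutators $[\alpha,\beta]\in G\cap U$ are used), and which is essential, since the statement is false for the generated subgroup.
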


\begin{proof}
Observe first that this is a local property, so we only need to prove this for linear groups. As the $\R$ factor lies in the center, we have that
	\[	[g g_0, h h_0 ] = [g,h], \qquad \forall \ g_0, h_0 \in \R \]
and thus we only need to prove this for $SL_2(\R)$. The commutator
	\[	\left[ \left(\begin{array}{cc} a & x \\ y & b \end{array}\right) , \left(\begin{array}{cc} c & z \\ w & d \end{array}\right) \right] = \left(\begin{array}{cc} t_1 & t_3 \\ t_4 & t_2 \end{array}\right),	\]
is defined by the relations 
	\begin{itemize}
		\item $t_1 = 1 + xy + zw + xyzw + wxac + w^2 x^2 - adxw - a^2zw + bxwd - yd^2x - azyd$,
	
		\item $t_2 = 1 + xy + zw + xyzw -xwbc - c^2xy  + ayzc - zb^2w - zbcy + zybd + z^2y^2$,

		\item $t_3 = xac(d-c) - cx^2w + acz(a-b) - xwbz + zydx + z^2ya$,

		\item $t_4 = w^2xb + wxcy + bdw(b-a) - awyz + bdy(c-d) - dy^2 z$.
	\end{itemize}
So that if $0 \leq |x|,|y|,|z|,|w| < \varepsilon$ and $1 - \varepsilon < a,b,c,d < 1 + \varepsilon$, then there is a constant $C>0$ independent of $\varepsilon$ such that $|t_3|, |t_4| < C \varepsilon^2$ and $|t_1-1|, |t_2 - 1| < C \varepsilon^2$. Thus, by choosing $\varepsilon > 0$ such that $C \varepsilon^2 < \varepsilon$, the neighborhood
	\[	U_\varepsilon = \left\{ \left(\begin{array}{cc} a & x \\ y & b \end{array}\right) : |x|,|y| < \varepsilon, \ |a-1|,|b-1| < \varepsilon \right\}	\]
is stable under taking commutators.
\end{proof}

\subsection{Isometry  groups  of finite  volume}

\begin{proposition}\label{prop:hyperbolic-discrete-projections}
Let $H$ be a Lie group which is a central extension of $PSL_2(\R)$ of the form
		\[	1 \rightarrow \R \rightarrow H \rightarrow PSL_2(\R) \rightarrow 1.	\]
If $G \subset H$ is a discrete subgroup with induced exact sequence
	\[	1 \rightarrow K \rightarrow G \rightarrow \Gamma \rightarrow 1,	\]
with $K \subset \R$, then either $\Gamma \subset PSL_2(\R)$ is discrete or is an abelian subgroup leaving fixed a point, a geodesic or a horocycle in $\H^2$.
\end{proposition}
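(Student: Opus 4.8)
The plan is to mirror the structure of the proof of Proposition \ref{prop:nil-discrete-proj}, splitting on the intersection $K=G\cap\R$ with the central fibre. If $K\neq\{e\}$ then, being a discrete subgroup of $\R$, it is infinite cyclic, so $\R/K\cong S^1$ and the quotient map $H/K\to PSL_2(\R)$ has compact kernel $S^1$. Since $K$ is central in $G$, the image $G/K$ is discrete in $H/K$, and a discrete subgroup projects to a discrete subgroup across a compact kernel; hence $\Gamma$ is discrete and we are in the first alternative. From now on I assume $K=\{e\}$, so that the projection restricts to an isomorphism $p|_G\colon G\xrightarrow{\ \sim\ }\Gamma$ whose inverse $s=(p|_G)^{-1}\colon\Gamma\to H$ is an (a priori discontinuous) homomorphic section with discrete image $s(\Gamma)=G$ meeting $\R$ only at the identity. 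Assuming $\Gamma$ is not discrete, I must produce the second alternative: $\Gamma$ abelian and fixing a point, a geodesic or a horocycle.

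First I would isolate the small elements. By Lemma \ref{lema:stable-neighborhoods-commutator-1} there is a neighbourhood $U$ of the identity in $H$ with $[U,U]\subset U$, which by discreteness of $G$ may be shrunk so that $U\cap G=\{e\}$. Because $\R$ is central, the commutator $[a,b]$ of lifts depends only on the images $\alpha,\beta\in PSL_2(\R)$; so for $\alpha,\beta\in\Gamma\cap p(U)$, lifting them inside $U$ gives $[a,b]\in[U,U]\cap G\subset U\cap G=\{e\}$. Thus any two small elements of $\Gamma$ commute. By Lemma \ref{lema:isometries-commutativity-fixedpoints} the non-trivial small elements share a single fixed set $\Phi$---a point of $\H^2$, a point of $\partial\H^2$, or the endpoint pair of a geodesic---and all lie in the associated one-parameter subgroup $C\subset PSL_2(\R)$. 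A conjugation-shrinking step then globalises $\Phi$: for fixed $\alpha\in\Gamma$ and $\gamma\in\Gamma$ small enough (using non-discreteness to find such $\gamma\neq e$) the element $\alpha\gamma\alpha^{-1}$ is again small, so $\mathrm{Fix}(\alpha\gamma\alpha^{-1})=\Phi$; as this equals $\alpha(\Phi)$ we get $\alpha(\Phi)=\Phi$. Hence $\Gamma\subset\mathrm{Stab}(\Phi)$, which already secures the ``fixes a point, a geodesic or a horocycle'' clause.

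It remains to force commutativity. In the elliptic case $\mathrm{Stab}(\Phi)=SO(2)$ is already abelian, so $\Gamma$ is abelian outright. In the parabolic and hyperbolic cases $\mathrm{Stab}(\Phi)$ is the Borel subgroup $B$, respectively $C\rtimes\Z_2$, and each of these has vanishing continuous second cohomology with $\R$-coefficients; hence the central extension splits over $\mathrm{Stab}(\Phi)$, giving a continuous homomorphism $\sigma\colon\mathrm{Stab}(\Phi)\to H$. Measuring $s$ against $\sigma$, the central logarithm $t(\gamma)$ of $s(\gamma)\sigma(\gamma)^{-1}$ is a homomorphism $t\colon\Gamma\to\R$, which annihilates $[\Gamma,\Gamma]$. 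Suppose $\Gamma$ were non-abelian. Since $\mathrm{Stab}(\Phi)/C$ is abelian we have $[\Gamma,\Gamma]\subset C$, and unless $\Gamma$ centralises $C$---forcing $\Gamma\subset C$ abelian---some $\alpha\in\Gamma$ acts non-trivially on $C$ by conjugation. Choosing small $\gamma_n\to e$ in $\Gamma\cap C$, the commutators $[\alpha,\gamma_n]$ are non-trivial elements of $[\Gamma,\Gamma]$ tending to $e$; since $t$ kills them, $s([\alpha,\gamma_n])=\sigma([\alpha,\gamma_n])\to e$, so non-identity elements of the discrete group $G$ accumulate at the identity, a contradiction. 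Hence $\Gamma$ is abelian, completing the second alternative.

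The hard part, and the step I expect to be delicate, is precisely this last reconciliation: a priori a non-discrete $\Gamma$ can live in the non-abelian groups $B$ or $C\rtimes\Z_2$, so the conclusion cannot be purely group-theoretic on $PSL_2(\R)$ and must genuinely exploit the central extension together with the discreteness of $G$. The mechanism is that discreteness forces the central heights $t(\gamma_n)$ of small elements to stay away from $0$, while $t$ nonetheless vanishes on the commutators manufactured from them; the vanishing of the second cohomology of $\mathrm{Stab}(\Phi)$ is what upgrades the set-theoretic section $s$ to something comparable with a continuous homomorphism and makes $t$ an honest character. I would double-check the two non-compact cases ($\Phi$ a boundary point, giving horocycles, and $\Phi$ a geodesic) most carefully, since the elliptic case is immediate from $\mathrm{Stab}(\Phi)=SO(2)$ already being abelian.
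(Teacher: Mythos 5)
Your proof is correct in substance, but it diverges from the paper's argument exactly at the point you flagged as delicate, and the paper's own proof shows that the entire second half of your argument can be avoided. Both proofs begin identically: Lemma \ref{lema:stable-neighborhoods-commutator-1} gives a commutator-stable neighbourhood $U$ with $U \cap G = \{e\}$, centrality of $\R$ makes commutators of lifts depend only on their projections, so all elements of $\Gamma \cap p(U)$ commute pairwise and, by Lemma \ref{lema:isometries-commutativity-fixedpoints}, lie in a single one-parameter group $C$ with common fixed set $\Phi$. The divergence is what comes next. You globalise by conjugation (for $\gamma$ small, $\alpha\gamma\alpha^{-1}$ is again small), which only yields $\Gamma \subset \mathrm{Stab}(\Phi)$, and you then need the splitting of the central extension over $\mathrm{Stab}(\Phi)$ plus the central character $t$ to exclude non-abelian $\Gamma$. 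The paper instead runs the commutator trick \emph{relative to each element}: for every $\alpha \in G$ choose $U_\alpha$ with $[\alpha, U_\alpha] \subset U$; for any $\beta \in G$ with $p(\beta) \in p(U_\alpha)$, writing $\beta = \beta_0\beta_1$ with $\beta_0 \in U_\alpha$ and $\beta_1 \in \R$ central gives $[\alpha,\beta] = [\alpha,\beta_0] \in U \cap G = \{e\}$. Thus every element of $\Gamma$ commutes with some non-trivial small element $\delta \in C$, hence lies in $C(\delta) = C$ by Lemma \ref{lema:isometries-commutativity-fixedpoints}. This puts $\Gamma$ inside the one-parameter group $C$ in one stroke: abelian and fixing a point, geodesic or horocycle, with no case analysis on $\mathrm{Stab}(\Phi)$, no cohomology, and no need to treat $K \neq \{e\}$ separately (your section $s = (p|_G)^{-1}$ forces that case split; the paper's argument is uniform in $K$).

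Two caveats on your version. First, the remark that $\Gamma \subset \mathrm{Stab}(\Phi)$ ``already secures'' the fixing clause is not accurate in the parabolic case: there $\mathrm{Stab}(\Phi)$ is the full Borel subgroup, whose hyperbolic elements preserve no horocycle at all; the clause only follows \emph{after} abelianness is established, by one more application of Lemma \ref{lema:isometries-commutativity-fixedpoints} ($\Gamma$ abelian and containing non-trivial elements of $C$ forces $\Gamma \subset C(\delta) = C$). Since you do prove abelianness, this is a repairable imprecision rather than a fatal gap, but the logical order in your write-up is backwards. Second, the splitting of the restricted extension over $B$ and over $C \rtimes \Z_2$ is true (for $B$: it is simply connected solvable with $H^2(\mathfrak{b};\R) = 0$; the dihedral case needs an additional unique-divisibility argument to extend the section over the reflection), but you assert it without proof, and it is by far the heaviest input in your argument --- considerably heavier than anything the proposition actually requires, as the paper's relative-commutator trick shows.
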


\begin{proof}
Denote by $p : H \rightarrow PSL_2(\R)$ the projection and consider $U \subset H$ a neighborhood of the identity such that $[U,U] \subset U$ and $U \cap G = \{e\}$. We have that the group $L = \langle p(U) \cap \Gamma \rangle$ is a commutative subgroup of $PSL_2(\R)$, to see why this is true take two elements $\alpha, \beta \in G$ such that $p(\alpha), p(\beta) \in p(U)$, then we may write those elements as $\alpha = \alpha_0 \alpha_1$ and $\beta = \beta_0 \beta_1$, where
	\[	\alpha_1,\beta_1 \in \R, \qquad \alpha_0, \beta_0 \in U.	\]
As $\R$ lies in the center of $H$ we have that $[\alpha_0,\beta_0] = [\alpha,\beta] \in G \cap U$ and thus $\alpha$ and $\beta$ commute. Now, for every $\alpha \in G$, choose a neighborhood of the identity $U_\alpha \subset G$ such that $[\alpha, U_\alpha] \subset U$, so that the elements of $\Gamma \cap p(U_\alpha)$ commute with $p(\alpha)$ (same argument as with the commutativity of $L$). Suppose that $\Gamma$ is non-discrete, then $L$ is a non-trivial commutative subgroup and for every $\gamma = p(\alpha) \in \Gamma$, we have that $\Gamma \cap p(U_\alpha)$ is a non-trivial subset that generates the group $L$ and commutes with $\gamma$. So, $\Gamma$ commutes with $L$ and thus, there exists an element $X \in \mathfrak{sl}_2(\R)$ such that 
	\[	\Gamma \subset \overline{L} = \{exp(tX) : t \in \R \}	\]
and $\Gamma$ leaves fixed a point, a geodesic or a horocycle, depending on the type of $X$.
\end{proof}

\begin{theorem}\label{teo:hyperbolic-fibration-isometry-group}
If $G$ is a discrete subgroup of isometries of $X$ either $\widetilde{SL}_2(\R)$ or $\H^2 \times \R$ such that $X/G$ has finite volume, then the isometry group $Iso(X/G)$ is a finite extension of $S^1$.
\end{theorem}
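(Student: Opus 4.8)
The plan is to treat the two geometries uniformly by exploiting that in each case the identity component $H := Iso(X)_0$ is a central extension
\[	1 \rightarrow \R \rightarrow H \rightarrow PSL_2(\R) \rightarrow 1,	\]
with $\R$ the centre of $H$ (the fibre direction), and that $Iso(X)$ has only finitely many connected components. Writing $p : H \rightarrow PSL_2(\R)$ for the projection and setting $G_0 = G \cap H$, a finite index subgroup of $G$, I would first apply Proposition \ref{prop:hyperbolic-discrete-projections} to $G_0$: since $H$ is exactly a central extension of the required form, it yields the dichotomy that $\Gamma := p(G_0) \subset PSL_2(\R)$ is either discrete, or abelian and fixing a point, a geodesic or a horocycle in $\H^2$.

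Next I would eliminate the non-discrete alternative by a volume estimate, following the proof of Theorem \ref{teo:isometry-group-product}. If $\Gamma$ fixes such a set $\zeta \subset \H^2$, then $G_0$ preserves the saturated set $p^{-1}(\zeta) \subset X$, an $\R$-bundle over the one-dimensional $\zeta$, on which $\Gamma$ acts (after passing to a finite index subgroup) by Euclidean motions. The resulting fundamental domain is unbounded along $\zeta$ and along the $\R$-fibre, so $X/G_0$, and hence $X/G$, would have infinite volume, contradicting the hypothesis. Thus $\Gamma$ must be discrete.

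In the discrete case $\Gamma$ is a lattice: the projection $X/G_0 \rightarrow \H^2/\Gamma$ displays $\H^2/\Gamma$ as a finite-area hyperbolic orbifold. The kernel $K = G_0 \cap \R = G \cap \R$ is then a non-trivial discrete subgroup, since otherwise the $\R$-fibre is non-compact and $X/G_0$ has infinite volume, so $\R/K \cong S^1$ and $X/G_0$ is Seifert fibred, $S^1 \rightarrow X/G_0 \rightarrow \H^2/\Gamma$. To compute $Iso(X/G) = N_{Iso(X)}(G)/G$ I would invoke Proposition \ref{prop:centralizer-normalizer}: its identity component is the image of the identity component $Z_0$ of the centraliser of $G$. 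Any connected centralising subgroup lies in $H$ and projects into the centraliser of the lattice $\Gamma$ in $PSL_2(\R)$, which is finite by the Borel density argument of Lemma \ref{lema:hyperbolic-isometry-group}; hence $Z_0$ is contained in the central $\R$, and its image is a closed subgroup of $\R/K \cong S^1$, generically all of $S^1$. For the group of components, $N_{Iso(X)}(G)$ projects to the normaliser of $\Gamma$ in $Iso(\H^2)$, which contains $\Gamma$ with finite index again by Lemma \ref{lema:hyperbolic-isometry-group}; quotienting by $\R \cdot G$ identifies $Iso(X/G)/S^1$ with a subquotient of the finite group $N_{Iso(\H^2)}(\Gamma)/\Gamma$, up to the finite kernel coming from the compact fibre directions. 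Therefore $Iso(X/G)$ is a finite extension of $S^1$, exactly as in Theorem \ref{teo:psl2-orbifolds-isometries}.

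The principal obstacle is the genuinely non-split central extension occurring for $X = \widetilde{SL}_2(\R)$: in contrast to the product $\H^2 \times \R$ treated in Theorem \ref{teo:isometry-group-product}, one cannot split off a direct $PSL_2(\R)$-factor, so both the application of Proposition \ref{prop:hyperbolic-discrete-projections} and the identification of the centraliser and normaliser must be carried out inside the extension. The uniform commutator control near the identity provided by Lemma \ref{lema:stable-neighborhoods-commutator-1}, already used to establish Proposition \ref{prop:hyperbolic-discrete-projections}, is precisely what makes these projection arguments valid in both geometries at once.
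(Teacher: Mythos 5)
Your proposal is correct and takes essentially the same route as the paper's proof: the dichotomy from Proposition \ref{prop:hyperbolic-discrete-projections}, the volume estimate borrowed from Theorem \ref{teo:isometry-group-product} to rule out non-discrete projections, the resulting circle fibration $S^1 \rightarrow X/G \rightarrow \H^2/\Gamma$ with $K = G \cap \R$ necessarily a non-trivial discrete group, and Lemma \ref{lema:hyperbolic-isometry-group} to finish. The only cosmetic difference is that you identify the identity component of $Iso(X/G)$ via Proposition \ref{prop:centralizer-normalizer} together with a Borel density argument, whereas the paper writes down the exact sequence of normalizers $1 \rightarrow N_0/\Z \rightarrow \widetilde{N}/G \rightarrow N/\Gamma \rightarrow 1$ explicitly.
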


\begin{proof}
The exact sequence
	\[	1 \rightarrow \R \rightarrow Iso(X) \rightarrow PSL_2(\R) \rightarrow 1	\]
induces the sequence
	\[	1 \rightarrow K \rightarrow G \rightarrow \Gamma \rightarrow 1,	\]
if $\Gamma$ is non-discrete, then it preserves a geodesic, a point or a horocycle by Proposition \ref{prop:hyperbolic-discrete-projections} and we can see that $X/G$ doesn't have finite volume (as we did in Theorem \ref{teo:isometry-group-product}). So, $\Gamma$ is a discrete subgroup of isometries of the hyperbolic plane and we have a fiber bundle structure
	\[	\R \rightarrow X \rightarrow \H^2	\]
so that the volume form decomposes as
	\[	\int_\R \int_{\H^2} f d \mu dt = \int_X f d vol_X	\]
where $d \mu$ is the hyperbolic area form. If $D \subset \H^2$ is a fundamental domain of $\Gamma$, then $\pi^{-1}(D) = \widehat{D}$ is such that $g \widehat{D} \cap \widehat{D} \neq \emptyset$ only for $g \in K = G \cap \R$. Thus for $\Omega \subset \R$ fundamental domain of $K$ in $\R$ we have that $\Omega \times D$ is a fundamental domain for $G$ which implies that
	\[	Vol(X/G) \geq \int_\Omega \int_D \xi = |\Omega| \times \mu(D),	\]
and we have that $\mu(A) < \infty$ and $|\Omega| < \infty$ which implies that $K = \Z$. Take $\widetilde{N} = N_{Iso(X)}(G)$ and $N = N_{PSL_2(\R)}(\Gamma)$, so that we have the exact sequence
	\[	1 \rightarrow N_0 \rightarrow \widetilde{N} \rightarrow N \rightarrow 1	\]
(because $gGg^{-1} = G$ projects to $\overline{g} \Gamma \overline{g}^{-1} = \Gamma$ and $N_0 = \R$ because $\R$ normalizes $\widetilde{SL_2}(\R)$), this sequence induces the exact sequence
	\[	1 \rightarrow N_0 / \Z \rightarrow \widetilde{N}/G \rightarrow N / \Gamma \rightarrow 1	\]
	(to see that this sequence is exact observe that $\pi(gG) = \overline{g} \Gamma$ is well defined and surjective, the condition $\pi (gG) = \Gamma$ holds if and only if $\overline{g} \in \Gamma$ and thus $g = [A,r]$ with $A \in \Gamma$, this is because there is an element $h = [A,s] \in G$, thus $gG = gh^{-1} G$ and $gh^{-1} \in i(N_0/\Z)$. This implies that the kernel of $\pi$ is $i(N_0 /\Z)$. Finally $i(r\Z) = G$ if and only if $r \in G$, but $G \cap \R = \Z$, so that $r \Z = \Z$ and thus $i$ is injective). This exact sequence can be written as
	\[	1 \rightarrow S^1 \rightarrow Iso(X/G) \rightarrow Iso(\H^2/\Gamma) \rightarrow 1	\]
which implies the result because of the Lemma \ref{lema:hyperbolic-isometry-group}.
\end{proof}

\section{Corollaries of  the Main  Theorem }\label{section:corollaries}

\subsection{Actions  of $SL_{k}(\mathbb{Z})$ on  aspherical three dimensional  manifolds  by  isometries.}
We  have  the  following  affirmative   solution to  Problem \ref{problem:shengkui}. 

\begin{theorem}

Any  group  action by  isometries   of $SL_{k+1}(\mathbb{Z})$, with  $k\geq 3$, on  a  closed, aspherical $3$-manifold factors  trough a  finite  group.  

\end{theorem}

\subsection{Discrete  groups  acting  with a  sufficiently  collapsed Alexandrov space as  quotient}

\begin{theorem}\label{cor:suffcollapsed}

Assume  that  a discrete group $\Gamma$ acts  by  isometries on the  three dimensional  Alexandrov  space $X$ such  that the  quotient $X/\Gamma$ is  sufficiently  collapsed  with parameters $d$, and $\epsilon$. Then, Theorem \ref{zimmer:3man},  together  with the  geometrization of  $3$-dimensional  Alexandrov  spaces provide  a  classification of the  possible  such $\Gamma$ within  the lattices in the  isometry  groups.  
\end{theorem}

\subsection{Hilbert Smith Conjecture  for  three dimensional Alexandrov  spaces }

Let  us  recall Theorem \ref{Theorem:pardon}. 
\begin{theorem}[\cite{pardon1}, \cite{pardon}]
For every prime $p$, there are no faithful actions by homeomorphisms of the $p$-adic group $\widehat{\Z}_p$ on a topological manifold of dimension $n \leq 3$.
\end{theorem}

As seen in section\ref{section:alex}, an Alexandrov space $X$ has a closed subset $S_X$, corresponding to topologically singular points and such that the set of regular points $R_X = X \setminus S_X$ is an open-dense subset, having the structure of a topological manifold. An action by homeomorphisms on $X$ must preserve the decomposition $X = S_X \cup R_X$ and a continuous action of $\widehat{\Z}_p$ which is trivial on the regular points, is trivial on the whole space $X$.  

Hence, the weaker version of the $p$-adic Hilbert-Smith conjecture for Alexandrov spaces holds. 

A  consequence of Theorem \ref{Theorem:pardon}, gives us

\begin{theorem}\label{Theorem:HS-for-Alex}
If $G$ is a locally compact, topological group, acting faithfully on a three dimensional Alexandrov space by homeomorphisms, then $G$ is a Lie group.
\end{theorem}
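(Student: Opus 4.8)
The plan is to reduce the statement to Pardon's theorem by means of a density argument on the regular locus. By the Gleason--Yamabe structure theory together with Newman's theorem---precisely the reduction recorded in the discussion preceding Conjecture \ref{conjecture:p-adic_HS}---a faithfully acting locally compact group can fail to be a Lie group only if, for some prime $p$, the $p$-adic group $\widehat{\Z}_p$ admits a faithful action. So I would argue by contradiction: suppose $\widehat{\Z}_p$ acts faithfully on the three dimensional Alexandrov space $X$ by homeomorphisms, and aim to contradict Theorem \ref{Theorem:pardon}. The whole point is to manufacture from this a faithful $\widehat{\Z}_p$-action on a genuine topological three-manifold.

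First I would invoke the structure theory of Theorem \ref{theo:alexdim3} to write $X = R_X \cup S_X$, where $S_X$ is the set of topologically singular points---which is \emph{discrete}---and $R_X = X \setminus S_X$ is open, dense, and, via Perelman's conical neighborhood theorem, a topological three-manifold (each of its points has a neighborhood homeomorphic to the cone over $S^2$, i.e.\ to $\R^3$). Since any homeomorphism of $X$ carries topologically regular points to regular points and singular points to singular points, every element of $\widehat{\Z}_p$ preserves this decomposition; in particular the action restricts to an action of $\widehat{\Z}_p$ on the three-manifold $R_X$.

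The key remaining point is faithfulness of this restriction. If $g \in \widehat{\Z}_p$ acts as the identity on $R_X$, then, because $R_X$ is dense in $X$ and $g$ acts continuously, $g$ is the identity on all of $X$, whence $g = e$ by faithfulness of the original action. Thus $\widehat{\Z}_p$ acts faithfully on the topological three-manifold $R_X$, contradicting Theorem \ref{Theorem:pardon}; therefore no $\widehat{\Z}_p$ can act faithfully on $X$, and the reduction above gives that $G$ is a Lie group. I expect the main obstacle to be exactly this passage to $R_X$: one needs the singular set to be thin enough (discreteness from Theorem \ref{theo:alexdim3}) that its complement is an honest manifold to which Pardon's three-dimensional result applies, and one needs faithfulness to survive the restriction. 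The latter is genuinely delicate in spirit---there do exist faithful $\widehat{\Z}_p$-actions on Cantor-type sets arising as end spaces---so it is essential that $R_X$ be a manifold and that density plus continuity of the action force triviality on all of $X$ from triviality on $R_X$.
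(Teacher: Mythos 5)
Your proposal is correct and follows essentially the same route as the paper: reduce via Gleason--Yamabe/Newman to excluding a faithful $\widehat{\Z}_p$-action, restrict the action to the open dense topologically regular set $R_X$ (a topological $3$-manifold preserved by every homeomorphism), note that density plus continuity makes this restriction faithful, and invoke Theorem \ref{Theorem:pardon}. This is precisely the argument the paper records in the discussion preceding Theorem \ref{Theorem:HS-for-Alex}.
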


\begin{remark}\label{remark:dense_singular}
As observed in previous section, there is a subset of metrically regular points which admits a compatible Riemannian metric, constructed in \cite{OtsuShioya}. Thus, we have as a consequence of Theorem \ref{Theorem:RepScep}, that the $p$-adic group $\widehat{\Z}_p$ cannot act faithfully by bi-Lipschitz homeomorphisms. However, we should be careful, as the set of metrically singular points can be dense, as seen in an example constructed in \cite{OtsuShioya} as a limit of Alexandrov spaces, using baricentric subdivisions of a tetrahedron.
\end{remark}

\subsection{Non Existence  of actions of   Higher  Rank Lattices  by Isometries on three dimensional Geometric Orbifolds }

\begin{theorem}
Let $\Gamma$ be a higher rank lattice acting by isometries on a finite volume, three dimensional orbifold $X$ (modelled over a homogeneous 3-manifold $X$), then the action factors through a finite group if either:
\begin{itemize}
    \item $X$ is aspherical or,

    \item $\Gamma$ is non-uniform.
\end{itemize}
As an example of this, we have $\Gamma = SL_{r}(\mathbb{Z})$ with  $r\geq 3$.
\end{theorem}

\subsection{Characterization of  Higher Rank Lattices actions  by  isometries  on three dimensional  spherical Orbifolds }

\begin{remark}\label{Existence:Actions_round_sphere}
As a consequence of the previous discussion, for every semisimple Lie group $G$ such that $G \times SO(4)$ is isotypic\footnote{For example, any product $G = G_1 \times \cdots \times G_k$, where each $G_j$ is one of $SO(3,1)$, $SO(2,2)$ or $SO(4,\C)$.}, there is an irreducible lattice $\Gamma \subset G$ and an homomorphism $\Gamma \rightarrow SO(4)$ with dense image. In particular, such lattice acts by isometries on the round sphere $S^3$ with dense orbits. This tells us that there is  no restriction on the dimension of the class of  higher rank lattices which can act on the round sphere, but the type of such lattice is restricted. The same applies to the $3$-orbifolds of the type $S^2 \times S^1$, as the first factor has isometry group $SO(3)$ which is simple.
\end{remark}

We have in fact a converse of Remark \ref{Existence:Actions_round_sphere}, given by the following Theorem:

\begin{theorem}\label{teo:infinite_image_rep}
If $\Gamma \subset G$ is a lattice in a higher rank, semisimple Lie group, $K$ is a compact Lie group and $\varphi : \Gamma \rightarrow K$ is a homomorphism with dense image, then the group $G \times K$ is isotypic and $\Gamma \subset G$ is cocompact.
\end{theorem}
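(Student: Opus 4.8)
The plan is to pass to the graph of $\varphi$ and apply Margulis's normal subgroup, superrigidity and arithmeticity theorems to it. Throughout I take $\Gamma$ to be irreducible in $G$, which is the case relevant to the surrounding discussion (a reducible lattice would split off factors to which $\varphi$ need not be tied, and $G\times K$ would not be forced to be isotypic). I first reduce $K$ to a manageable form. Since a compact Lie group has finitely many components, after replacing $\Gamma$ by the finite–index subgroup $\varphi^{-1}(K^{\circ})$ we may assume $K$ is connected and $\varphi(\Gamma)$ is dense in $K$; this changes neither the lattice property nor cocompactness. Writing a finite cover of $K$ as a product of a compact semisimple group with a torus $T$, the induced map $\Gamma\to T$ factors through the abelianization $\Gamma/[\Gamma,\Gamma]$, which is finite by Margulis's Normal Subgroup Theorem, so density forces $T$ to be trivial. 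Hence we may assume $K$ is compact, connected and semisimple with $\varphi(\Gamma)$ dense.

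Next I would form the graph $\Delta=\{(\gamma,\varphi(\gamma)):\gamma\in\Gamma\}\subset G\times K$. As $\Gamma$ is discrete in $G$ and $K$ is compact, $\Delta$ is discrete; the first projection identifies $\Delta$ with $\Gamma$, and $(G\times K)/\Delta$ fibers over $G/\Gamma$ with compact fiber $K$, so it has finite volume. Thus $\Delta$ is a lattice in $G\times K$, and since $K$ is compact the real rank of $G\times K$ equals that of $G$ and is $\geq 2$. The density of $\varphi(\Gamma)$ makes the projection $\Delta\to K$ non-discrete, and combined with the irreducibility of $\Gamma$ this prevents $\Delta$ from virtually splitting along any partition of the simple factors of $G\times K$: a splitting could not have $K$ on either side, since $\Delta\cap(\{e\}\times K)$ is trivial while any complementary factor inside $G$ would yield a sublattice contradicting irreducibility of $\Gamma$. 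Hence $\Delta$ is an irreducible lattice in the higher–rank group $G\times K$.

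I would then invoke the Margulis arithmeticity and superrigidity theorems (see \cite{morris}, \cite{zimmerbook}) for $\Delta$. These produce a number field $F$ and an $F$–simple group $\mathbb{H}=\mathrm{Res}_{E/F}\mathbb{H}'$, with $\mathbb{H}'$ absolutely simple over an intermediate field $E$, together with an identification, up to isogeny and commensurability, of $G\times K$ with the product $\prod_{v\mid\infty}\mathbb{H}(F_v)$ of archimedean completions, under which $\Delta$ corresponds to $\mathbb{H}(\mathcal{O}_F)$. The noncompact factors then assemble to $G$, the compact ones to $K$, and $\varphi$ is commensurable to the projection onto the compact places. Every factor $\mathbb{H}(F_v)$ is a real or complex form of $\mathbb{H}'$, so all simple factors of the complexification of $G\times K$ are isomorphic to $\mathbb{H}'_{\C}$; this is precisely the statement that $G\times K$ is isotypic.

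For cocompactness I would argue by contradiction using Godement's criterion. If $\Gamma$ were not cocompact in $G$, it would contain a nontrivial unipotent element $u$; under the arithmetic identification $u$ corresponds to a nontrivial unipotent of $\mathbb{H}(F)$, and since $F$ embeds into each $F_v$ its image in the compact factor $K$ is a nontrivial unipotent element. A compact group has no nontrivial unipotent elements, so this image must be trivial, i.e. $u\in\ker\varphi$. Then $\ker\varphi$ is an infinite normal subgroup of $\Gamma$ (as $u$ has infinite order), hence of finite index by the Normal Subgroup Theorem, making $\varphi(\Gamma)$ finite and contradicting its density in the infinite group $K$. Therefore $\Gamma$ is cocompact. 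The main obstacle is the correct application of superrigidity and arithmeticity in the presence of the compact factor $K$, namely extracting the number–theoretic model above and pinning $\varphi$ to the projection onto the compact archimedean places; once this identification is secured, both the isotypic property and cocompactness follow immediately.
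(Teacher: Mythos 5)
Your proposal is correct and is essentially the argument the paper itself relies on: the paper offers no proof beyond citing the sketch in Brown--Fisher--Hurtado (Section 2.3) for the isotypic assertion and Godement's compactness criterion for cocompactness, and your write-up (the graph lattice $\Delta \subset G \times K$, Margulis arithmeticity/superrigidity to pin $\varphi$ to the compact archimedean places, then unipotents plus the normal subgroup theorem for cocompactness) is a fleshed-out version of exactly that route, including the implicit irreducibility hypothesis you rightly make explicit. The one point to watch --- which you yourself flag as the main obstacle --- is that standard statements of Margulis arithmeticity assume the ambient group has no compact factors, so formally one should apply arithmeticity to $\Gamma \subset G$ and then superrigidity (in its version for representations of arithmetic groups) to $\varphi$, rather than invoking arithmeticity directly for $\Delta \subset G \times K$; this is a presentational repair, not a change of method, and it leaves the theorem at the same level of reliance on the literature as the paper's own treatment.
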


Theorem \ref{teo:infinite_image_rep} is ``well known to the experts'', but a sketch of the first part of its proof is made in \cite{brownfisherhurtado2}, section 2.3. The fact that the lattice $\Gamma$ is cocompact is a consequence of Godement's compactness criterion.

\begin{corollary}
If $\Gamma \subset G$ is a lattice in a higher rank, simple Lie group, $K$ is a compact Lie group and $\varphi : \Gamma \rightarrow K$ is a homomorphism with infinite image, then $G \times L$ is isotypic, with $L = \overline{\varphi(\Gamma)}$. In particular, $\mathrm{dim}(G) \leq \mathrm{dim}(K)$ and $\Gamma$ is cocompact in $G$.
\end{corollary}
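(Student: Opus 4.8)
The plan is to reduce the statement to Theorem \ref{teo:infinite_image_rep} by passing to the closure of the image. First I would set $L = \overline{\varphi(\Gamma)}$, the closure of the image inside $K$. As a closed subgroup of the compact Lie group $K$, the group $L$ is again a compact Lie group, and it sits inside $K$ with $\dim L \le \dim K$. By its very construction $\varphi(\Gamma)$ is dense in $L$, so $\varphi : \Gamma \to L$ is a homomorphism with dense image. Since $G$ is simple it is in particular a higher rank semisimple Lie group, so all the hypotheses of Theorem \ref{teo:infinite_image_rep} are met with $K$ replaced by $L$. Applying that theorem gives at once that $G \times L$ is isotypic and that $\Gamma$ is cocompact in $G$.

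It remains to extract the dimension inequality $\dim(G) \le \dim(K)$, and here is where the hypothesis of infinite image enters. Because $\varphi(\Gamma)$ is infinite, its closure $L$ is an infinite compact Lie group and therefore has positive dimension; in particular its Lie algebra is nonzero. Since $G \times L$ is isotypic it is semisimple, which forces $L$ to be semisimple as well (an abelian ideal of $\mathfrak{l}$ would be an abelian ideal of the semisimple Lie algebra of $G \times L$, because the two factors commute). Consequently the complexification $L_\C$ is a nontrivial product of simple complex factors.

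To finish I would compare dimensions through the complexifications. Writing $G_\C$ and $L_\C$ for the complexifications, isotypicity of $G \times L$ means every simple factor of $G_\C \times L_\C$ is locally isomorphic; as $G$ is simple, $G_\C$ is one such factor, so every simple factor of $L_\C$ is locally isomorphic to $G_\C$. Since $L_\C$ is a nonempty product of such factors, $\dim_\C L_\C \ge \dim_\C G_\C$. Using that the real dimension of a real form equals the complex dimension of its complexification (a quantity unaffected by passing to covers), together with $\dim L \le \dim K$, I obtain the chain
	\[	\dim(G) = \dim_\C G_\C \le \dim_\C L_\C = \dim(L) \le \dim(K),	\]
which is the desired bound.

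I expect the only genuine input to be Theorem \ref{teo:infinite_image_rep} itself; the remainder is the bookkeeping of the reduction and a dimension count. The one point requiring care is that the cocompactness conclusion of Theorem \ref{teo:infinite_image_rep} relies on the compact target being positive dimensional, and this is exactly what the infinite image hypothesis guarantees here through the positive dimensionality of $L$. This is why that assumption cannot be dropped: a merely finite image would leave room for non-uniform higher rank lattices such as $SL_3(\Z)$, which surject onto finite groups yet are not cocompact.
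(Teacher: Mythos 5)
Your proposal is correct and takes essentially the same route as the paper, which offers no separate proof of this corollary: it is treated as immediate from Theorem \ref{teo:infinite_image_rep} applied to the compact group $L = \overline{\varphi(\Gamma)}$, which is exactly your reduction, with your dimension count and your remark on why the infinite-image hypothesis is indispensable supplying details the paper leaves implicit. The only caveat---present already in the paper's statement rather than introduced by you---is that your step ``as $G$ is simple, $G_\C$ is one such factor'' tacitly assumes $G$ is \emph{absolutely} simple ($G_\C$ simple); if $G$ is a complex simple group viewed as a real group, e.g.\ $G = SO(5,\C)$ (real rank $2$) with the cocompact lattice $SO(q,\mathcal{O}_k)$, $k = \mathbb{Q}(\sqrt[3]{2})$, $q$ a form in five variables positive definite at the real place, mapping densely into $K = SO(5)$, then $G_\C$ has two simple factors, $G \times L$ is still isotypic, but $\dim(G) = 20 > 10 = \dim(K)$, so both the statement and the dimension step should be read with ``absolutely simple'' in place of ``simple.''
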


As an immediate consequence, non-cocompact lattices don't appear in this setting and we have

\begin{corollary}\label{Nonexistence:general_lattice_actions}
Let $X$ be a geometric $3$-orbifold of finite volume, and $\Gamma$ a non-cocompact higher rank lattice in a semisimple Lie group $G$, then any action of $\Gamma$ in $X$ factors through a finite group.
\end{corollary}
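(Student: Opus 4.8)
The plan is to reduce the statement to the dense-image rigidity result of Theorem \ref{teo:infinite_image_rep}, after first recording that the relevant isometry group is compact. Concretely, an isometric action of $\Gamma$ on $X$ is a homomorphism $\rho \colon \Gamma \to \mathrm{Iso}(X)$, so I must show that $\rho$ has finite image. The first step is to observe that, because $X$ is a geometric $3$-orbifold of finite volume, Theorem \ref{zimmer:3man} identifies the identity component of $\mathrm{Iso}(X)$ as a closed subgroup of one of $S^1$, $SO(3) \times S^1$, $\IR^3/\IZ^3$ or $SO(4)$ (and as finite in the $\IH^3$ and $Sol$ cases); since there are only finitely many components, $K := \mathrm{Iso}(X)$ is a compact Lie group in every case.

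Next I would pass to the closure of the image. Set $L := \overline{\rho(\Gamma)} \subseteq K$; as a closed subgroup of a compact Lie group, $L$ is again a compact Lie group, and by construction $\rho \colon \Gamma \to L$ has dense image. The key dichotomy is then visible from the dimension of $L$: if $\rho(\Gamma)$ is finite then $L = \rho(\Gamma)$ and we are done, while if $\rho(\Gamma)$ is infinite then $L$ is a compact Lie group of positive dimension (a zero-dimensional compact Lie group being finite).

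In the infinite case I would apply Theorem \ref{teo:infinite_image_rep} with $L$ in the role of the target compact group: since $\Gamma$ is a higher rank lattice in the semisimple group $G$ and $\rho \colon \Gamma \to L$ has dense image, the theorem forces $G \times L$ to be isotypic and, crucially, $\Gamma$ to be cocompact in $G$. This directly contradicts the standing hypothesis that $\Gamma$ is non-cocompact. Hence $\rho(\Gamma)$ must be finite and the action factors through the finite group $\rho(\Gamma) \cong \Gamma / \ker \rho$.

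I do not expect a serious obstacle, since both ingredients are already in place: the compactness of $\mathrm{Iso}(X)$ is exactly Theorem \ref{zimmer:3man}, and the rigidity input is Theorem \ref{teo:infinite_image_rep}, stated for semisimple $G$ (so no reduction to the simple case is needed, unlike the preceding corollary). The one point requiring care is to invoke Theorem \ref{teo:infinite_image_rep} only when $L$ has positive dimension; applying it to a finite $L$ would be vacuous and would not yield cocompactness, so the dichotomy between finite and positive-dimensional closure is precisely what carries the argument.
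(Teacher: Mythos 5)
Your proof is correct and follows exactly the route the paper intends: the paper states this corollary as an ``immediate consequence'' of Theorem \ref{teo:infinite_image_rep}, with the compactness of $\mathrm{Iso}(X)$ supplied by Theorem \ref{zimmer:3man}, and your argument (pass to the closure $L$ of the image, note the finite-versus-positive-dimensional dichotomy, and derive a contradiction with non-cocompactness) is precisely the detail-filling the paper leaves implicit. No gaps.
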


As a particular example of the previous, any action of $SL_n(\Z)$ in a geometric $3$-orbifold of finite volume, factors through a finite group.

\begin{corollary}\label{Existence:general_lattice_actions}
Let $X$ be a geometric $3$-orbifold of finite volume, then $X$ admits an isometric action of a higher rank lattice $\Gamma \subset G$ if and only if the group $Iso(X)$ contains the group $SO(3)$. Moreover, the semisimple Lie group $G$ is isotypic of type $SO(3)$ and the lattice is uniform. 
\end{corollary}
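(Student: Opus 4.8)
The plan is to read ``$X$ admits an isometric action of $\Gamma$'' as ``$X$ admits an isometric $\Gamma$-action that does not factor through a finite group'', since otherwise the statement is vacuous (finite quotients of any lattice act on any space). Under this reading the equivalence becomes: an infinite-image isometric action of a higher rank lattice exists if and only if $Iso(X)$ contains a copy of $SO(3)$, and in that case the ambient group is forced to be isotypic of type $SO(3)$ and the lattice to be uniform. I would prove the two implications separately.

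For the forward implication, suppose $\varphi:\Gamma\to Iso(X)$ has infinite image. First I would record that, $X$ being a finite-volume geometric $3$-orbifold, $Iso(X)$ is a compact Lie group; this is the content of Theorem \ref{zimmer:3man}, whose listed identity components are all compact and which asserts finitely many components. Hence $L:=\overline{\varphi(\Gamma)}$ is a compact Lie subgroup, and since $\varphi(\Gamma)$ is infinite, $L$ is positive dimensional. By construction $\varphi:\Gamma\to L$ has dense image, so Theorem \ref{teo:infinite_image_rep} applies and yields at once that $G\times L$ is isotypic and that $\Gamma$ is cocompact; this already delivers the ``uniform'' clause of the Moreover (one may equally read it off from the contrapositive of Corollary \ref{Nonexistence:general_lattice_actions}). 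It then remains to identify $L$. Here I would use that higher rank lattices have finite abelianization by Kazhdan's property (T), so $\varphi$ has finite image in any torus quotient of $L$ and consequently the identity component $L_0$ is semisimple. Now $L_0$ is a nontrivial compact semisimple subgroup of $Iso(X)_0$, and inspecting the possibilities in Theorem \ref{zimmer:3man} shows that the only geometries with nonabelian isometry identity component are $S^3$ and $S^2\times\IR$, in both of which every connected compact semisimple subgroup is of type $A_1$ and, after projecting off the abelian $S^1$ directions (which carry no nontrivial semisimple subgroup and admit no nontrivial homomorphism from $SO(3)$), realizes an honest copy of $SO(3)\subset Iso(X)$. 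Since $G\times L$ is isotypic and $L_0$ is of type $A_1$, all simple factors of $G$ are of type $A_1$, i.e. $G$ is isotypic of type $SO(3)$.

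For the reverse implication, assume $Iso(X)\supset SO(3)$. I would produce the required lattice exactly as in the construction underlying Remark \ref{Existence:Actions_round_sphere}, via restriction of scalars: fix a totally real field $k$ of degree $\geq 3$ and a quaternion $k$-algebra $B$ split at two archimedean places and ramified at a third, so that the norm-one $\mathcal O_k$-points form an irreducible cocompact lattice $\widehat\Gamma$ in a group of the form $G\times SU(2)$ with $G$ of real rank $\geq 2$ and every factor of type $A_1$. Then $G$ is higher rank and isotypic of type $SO(3)$; the projection $G\times SU(2)\to G$ identifies $\widehat\Gamma$ with a uniform higher rank lattice $\Gamma\subset G$, while the projection to the compact factor, composed with $SU(2)\to SO(3)\hookrightarrow Iso(X)$, gives a homomorphism $\Gamma\to Iso(X)$ with dense, hence infinite, image. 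This is an isometric action of the desired type that does not factor through a finite group, completing the equivalence together with the Moreover.

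The main obstacle is the structural step in the forward direction, namely pinning down $L_0$. The reduction to a compact group with dense image and the appeal to Theorem \ref{teo:infinite_image_rep} are formal; the real work lies in combining property (T) (to excise all torus directions) with the three-dimensional classification of Theorem \ref{zimmer:3man} (to see that the surviving compact semisimple subgroup can only be of type $A_1$ and in fact produces $SO(3)$ inside $Iso(X)$). This is precisely where the hypothesis $\dim X=3$ is indispensable: it is what forbids compact factors of higher type and thereby forces simultaneously the conclusion $Iso(X)\supset SO(3)$ and the isotypic type of $G$.
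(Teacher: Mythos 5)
Your proposal is correct and follows essentially the same route the paper intends, since the paper gives no explicit proof of this corollary: the forward direction is exactly the combination of Theorem \ref{teo:infinite_image_rep} with the classification of identity components in Theorem \ref{zimmer:3man}, and the reverse direction is the restriction-of-scalars construction underlying Remark \ref{Existence:Actions_round_sphere}, adapted from $SO(4)$ to $SU(2)\to SO(3)$. The one imprecision in your argument --- that a compact connected semisimple subgroup of $SO(4)$ may be a copy of $SU(2)$ that yields no actual $SO(3)$ inside $Iso(X)$ (e.g.\ for odd lens spaces) --- is inherited from the paper's own statement, which should read ``contains a subgroup locally isomorphic to $SO(3)$''.
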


Observe that the group $SO(4)$ factors locally as the product $SO(3) \times SO(3)$ and in fact, there is a copy of $SO(3)$ inside $SO(4)$, so that the previous Corollary includes at the same time examples like $X = S^3 / \Lambda$ and $X = (S^2 \times \R)/\Lambda$.

\bibliographystyle{alpha}
\bibliography{Rigidity}

\end{document}